\def\elsartstyle{%
    \def\normalsize{\@setfontsize\normalsize\@xiipt{14.5}}
    \def\small{\@setfontsize\small\@xipt{13.6}}
    \let\footnotesize=\small
    \def\large{\@setfontsize\large\@xivpt{18}}
    \def\Large{\@setfontsize\Large\@xviipt{22}}
    \skip\@mpfootins = 18\p@ \@plus 2\p@
    \normalsize
}
\begin{document}

\begin{frontmatter}
\title{Changing and unchanging 2-rainbow independent domination}

\author[B1]{Pu Wu}\ead{puwu1997@126.com},
\author[B1]{Zehui Shao\corauthref{cor}}\ead{zshao@gzhu.edu.cn},
\author[B2]{Vladimir Samodivkin}\ead{vl.samodivkin@gmail.com},
\author[B3]{S.M. Sheikholeslami}\ead{s.m.sheikholeslami@azaruniv.ac.ir},
\author[B3]{M. Soroudi}\ead{m.soroudi@azaruniv.ac.ir},
\author[B4]{Shaohui Wang}\ead{shaohuiwang@yahoo.com}
\journal{Discrete Mathematics \& Theoretical Computer Science}

\corauth[cor]{Corresponding author.}

\address[B1]{Institute of Computing Science and Technology Guangzhou University, Guangzhou 510006, China}
\address[B2]{Department of Mathematics, University of Architecture, Civil Engineering and Geodesy, Sofia, Bulgaria}
\address[B3]{Department of Mathematics, Azarbaijan Shahid Madani University Tabriz, I.R. Iran}
\address[B4]{Department of Mathematics, Savannah State University, Savannah, GA 31404, USA}

\begin{abstract}
For a function $f : V(G ) \rightarrow \{0, 1, 2\}$  we denote by $V_i$ the set of vertices to which the value $i$ is assigned by
$f$, i.e. $V_i = \{ x \in V (G ) : f(x ) = i \}$.
If a function $f: V(G) \rightarrow \{0,1,2\}$  satisfying the condition that
$V_i$ is independent for $i \in \{1,2\}$ and
every vertex $u$ for which $f(u) = 0$
is adjacent to at least one vertex $v$ for which $f(v) = i$ for each $i \in \{1,2\}$,
then $f$ is called a 2-rainbow independent dominating function (2RiDF).
The weight $w(f)$ of a 2RiDF $f$ is the value $w(f) = |V_1|+|V_2|$. The minimum weight of
a 2RiDF on a graph $G$ is called the \emph{2-rainbow independent domination
number} of $G$.
A graph $G$ is 2-rainbow independent domination stable if the 2-rainbow independent domination
number of $G$ remains unchanged under removal of any vertex.
In this paper, we characterize 2-rainbow independent domination stable trees and we study the effect of edge removal on 2-rainbow independent domination
number in trees.
\end{abstract}

\begin{keyword}
2-rainbow independent domination number, 2-rainbow independent domination stable graph, tree
\end{keyword}
\end{frontmatter}
\newtheorem{thm1}{Theorem}
\newtheorem{clm1}{Claim}
\newtheorem{fact1}{Fact}
\newtheorem{rem1}{Remark}
\newtheorem{lem1}{Lemma}
\newtheorem{cor1}{Corollary}
\newtheorem{def1}{Definition}
\newtheorem{con1}{Construction}
\newtheorem{ex1}{Example}
\newtheorem{prop1}{Proposition}
\newtheorem{prob1}{Problem}
\newtheorem{op1}{Operation}
\newtheorem{obv}{Obervation}
\newtheorem{prelem}{{\bf Proposition}}
\newtheorem{theorem}{Theorem}
\newtheorem{corollary}[theorem]{Corollary}
\newtheorem{lemma}[theorem]{Lemma}
\newtheorem{observation}[theorem]{Observation}
\newtheorem{proposition}[theorem]{Proposition}
\newtheorem{exa}[theorem]{Example}
\newtheorem{p}{Problem}
\newtheorem{con}{Conjecture}
\theoremstyle{definition}
\newtheorem{definition}[theorem]{Definition}
\theoremstyle{remark}


\newtheorem{them}{\bf Theorem}
\newtheorem{lema}[them]{\bf Lemma}
\renewcommand{\thethem}{\Alph{them}}
\renewcommand{\thelema}{\Alph{lema}}
\newtheorem{observ}[them]{\bf Observation}
\renewcommand{\theobserv}{\Alph{observ}}
\renewcommand{\theprop}{\Alph{\prop}}
\renewcommand{\thecor}{\Alph{cor}}

\section{Introduction}
In this paper, we only consider finite graphs without loops and multiple edges.
For notation and graph theory terminology we follow \cite{hhs1} in general.
Let $G=(V,E)$ be a graph with vertex set $V(G)=V$ and edge set $E(G)=E$. The {\em order} $|V|$ of $G$ is denoted by $n=n(G)$.
For any vertex $v \in V(G)$, the \emph{ open neighborhood} of $v$ is the set  $N(v)=\{u\in V(G)\mid uv\in E(G)\}$, and the {\em closed neighborhood} of $v$ is $N[v]=\{v\} \cup N(v)$.
The \emph{degree} of a vertex $v\in V$ is $\deg(v)=\deg _{G}(v)=|N(v)|$.
The \emph{distance} of two vertices $u$ and $v$ in $G$, denoted by $d_G(u,v)$, is the length of a shortest path between $u$ and $v$. The
\emph{diameter} ${\rm diam}(G)$ of a graph $G$ is the
greatest distance between two vertices of $G$. A \emph{leaf} is a vertex of degree 1,
a \emph{support vertex} is a vertex adjacent to a leaf, and a \emph{strong support
vertex} is a support vertex adjacent to at least two leaves. For a vertex
$v$ in a (rooted) tree $T$, let $C(v)$ and $D(v)$ denote the set of
children and descendants of $v$, respectively and let $D[v]=D(v)\cup \{v\}$.
Also, the {\em depth of $v$}, depth($v$), is the largest distance from $v$
to a vertex in $D(v)$.
The \emph{maximal subtree} at $v$ is the subtree of $T$ induced by $D[v]$,
and is denoted by $T_{v}$.  A graph is \emph{trivial} if it has a single vertex. We write $P_n$ for the path on $n$ vertices and $C_n$ for the cycle on $n$ vertices.
For a graph $G$, let $I(G)=\{v|v\in V(G),\deg(v)=1\}$ and $L(v)=N(v)\cap I(G)$.
A {\em double star} $DS_{p,q}$ is a tree containing exactly two non-pendant vertices which one is adjacent to $p$ leaves and other is adjacent to $q$ leaves.

A set $S\subseteq V$ in a graph $G$ is a \textit{dominating set} if every
vertex of $G$ is either in $S$ or adjacent to a vertex of $S.$ The \textit{%
domination number} $\gamma (G)$ equals the minimum cardinality of a
dominating set in $G$. An {\em efficient
dominating set} in a graph $G$ is a set $S \subseteq V (G)$ such that $\cup_{s \in S}N[s]$ is a partition
of $V(G)$. For a comprehensive treatment of domination parameters in
graphs, see the monographs by Haynes, Hedetniemi, and Slater \cite{hhs1, hhs2,ww}.
An {\em independent dominating set}  of $G$ is a dominating set of $G$ which is independent.
The minimum cardinality of
an independent dominating set on a graph $G$ is called the \emph{independent domination
number} $i(G)$ of $G$.
An independent set $S$ of a graph $G$ is called an $i$-set if $|S|=i(G)$.
This graph-theoretical invariant has been explored extensively in the literature; for an
illuminating survey the reader is referred to Goddard and Henning \cite{hhh}.
Independent dominating
sets and their combination with other domination parameters have been studied extensively in the literature, e.g. with rainbow domination \cite{Shao}, Roman domination \cite{c1,c2,sam}; see for example the books \cite{hhs1, hhs2}.

For a function $f : V(G ) \rightarrow \{0, 1, 2\}$  we denote by $V_i$ the set of vertices to which the value $i$ is assigned by
$f$, i.e. $V_i = \{ x \in V (G ) : f(x ) = i \}$.
If a function $f \rightarrow \{0,1,2\}$  satisfying the condition that
$V_i$ is independent for $i \in \{1,2\}$ and
every vertex $u$ for which $f(u) = 0$
is adjacent to at least one vertex in $V_i$ for each $i \in \{1,2\}$,
then $f$ is called a 2-rainbow independent dominating function (2RiDF).
The weight $w(f)$ of a 2RiDF $f$ is the value $w(f) = \sum_{i=1}^2|V_i|$. The minimum weight of
a 2RiDF on a graph $G$ is called the \emph{2-rainbow independent domination
number} of $G$. The concept of $k$-rainbow independent  dominating function was
first defined by \v{S}umenjak et al. \cite{Sumenjak}.
A graph $G$ is {\em 2-rainbow independent domination stable} ($\gamma_{ri2}$-stable) if the 2-rainbow independent domination number of $G$ remains unchanged
under removal of any vertex. A graph $G$ is called a $\gamma_{ri2}$-ER-critical graph if the 2-rainbow independent domination number of $G$ changed under removal of any edge.

In this paper, we first study basic properties of 2-rainbow independent domination stable. Then we characterize 2-rainbow independent domination stable trees and we investigate the effect of edge removal on 2-rainbow independent domination number in trees.

We make use of the following results in this paper.


\begin{prop}\label{leaf}\cite{Sumenjak}
Let $x$ be a leaf of a nontrivial tree $T$.
 Then $i(T)-1 \leq i(T-x) \leq i(T)$ and $\gamma_{ri2}(T)-1 \leq \gamma_{ri2}(T-x)
 \leq\gamma_{ri2}(T)$.
\end{prop}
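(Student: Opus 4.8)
The plan is to prove the four one‑sided inequalities separately, in each case manipulating an optimal object on one graph to produce an admissible object on the other. Fix the unique neighbour $y$ of the leaf $x$; since $T$ is nontrivial, $y$ exists and is a support vertex. For the two upper bounds I will start from an optimum on $T$ and restrict/repair it to $T-x$ without increasing the weight; for the two lower bounds I will start from an optimum on $T-x$ and extend it to $T$ at an extra cost of at most one.

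For the independent domination inequalities, let $S$ be an $i$-set of $T$. If $x\notin S$, then $S$ is already an independent dominating set of $T-x$, so $i(T-x)\le i(T)$. If $x\in S$, then $y\notin S$, and $S\setminus\{x\}$ dominates every vertex of $T-x$ except possibly $y$; when $y$ has another neighbour in $S$ this already suffices, and otherwise $(S\setminus\{x\})\cup\{y\}$ is an independent dominating set of $T-x$ of the same cardinality as $S$. Conversely, let $S^{\ast}$ be an $i$-set of $T-x$. If $y\in S^{\ast}$ then $S^{\ast}$ itself dominates $T$ and remains independent there, so $i(T)\le i(T-x)$; otherwise $S^{\ast}\cup\{x\}$ is an independent dominating set of $T$, whence $i(T)\le i(T-x)+1$, i.e. $i(T-x)\ge i(T)-1$.

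For the $2$-rainbow independent domination inequalities the key preliminary remark is that in \emph{any} $2$RiDF $f$ of a nontrivial tree a leaf never receives the value $0$: a $0$-leaf would need a neighbour in $V_{1}$ and a neighbour in $V_{2}$, but it has only one neighbour. Hence $f(x)\in\{1,2\}$, and since the definition is symmetric in the labels $1$ and $2$ we may assume $f(x)=1$, so that $f(y)\in\{0,2\}$. To bound $\gamma_{ri2}(T-x)$ from above, take a $\gamma_{ri2}$-function $f$ of $T$ and consider its restriction $g$ to $V(T-x)$. If $f(y)=2$, or if $f(y)=0$ and $y$ has a neighbour in $V_{1}\setminus\{x\}$, then $g$ is already a $2$RiDF of $T-x$, of weight $w(f)-1$. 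The one remaining case is $f(y)=0$ with $x$ the unique $V_{1}$-neighbour of $y$; there I instead reassign the value $1$ to $y$, which produces a $2$RiDF of $T-x$ of weight exactly $w(f)$, since the unit removed at $x$ is returned at $y$. In all cases $\gamma_{ri2}(T-x)\le\gamma_{ri2}(T)$. For the other direction, take a $\gamma_{ri2}$-function $g$ of $T-x$ and extend it to $T$ by colouring $x$ with the value $2$ if $g(y)=1$ and with the value $1$ if $g(y)\in\{0,2\}$; this choice keeps both $V_{1}$ and $V_{2}$ independent, colouring a new vertex cannot deprive any old $0$-vertex of a rainbow neighbour, and $x$ itself is now coloured, so the result is a $2$RiDF of $T$ of weight $w(g)+1$; hence $\gamma_{ri2}(T)\le\gamma_{ri2}(T-x)+1$.

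The verifications left to carry out are routine: for each function or set constructed above, check that $V_{1}$ and $V_{2}$ stay independent and that every $0$-vertex still sees both colours. These localise, because $x$ is a leaf, so the only vertex whose status the deletion or addition of $x$ can affect is $y$. The one place a naive argument breaks — and the step I expect to be the main obstacle — is the upper bound $\gamma_{ri2}(T-x)\le\gamma_{ri2}(T)$ when $f(y)=0$ and $x$ is the only $V_{1}$-neighbour of $y$: merely restricting $f$ then leaves $y$ without a neighbour in $V_{1}$, and one must spend the weight saved by deleting $x$ on recolouring $y$. An entirely analogous subtlety arises in the independent domination bound when $x$ lies in the chosen $i$-set and $y$ has no other neighbour in it.
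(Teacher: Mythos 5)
Your proof is correct. Note that the paper does not actually prove this proposition --- it is quoted from \v{S}umenjak, Rall and Tepeh \cite{Sumenjak} --- so there is no in-text argument to compare against; your self-contained derivation (restricting an optimal set/function from $T$ to $T-x$ for the upper bounds, extending one from $T-x$ to $T$ at cost at most one for the lower bounds, with the repair step at the support vertex $y$ handled explicitly) is the standard argument and all the case checks go through, including your correct rederivation of the fact that a leaf never receives $0$ in a 2RiDF (the paper's Observation~3).
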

\begin{corollary}\label{+}
There is no tree $T$ such that $\gamma_{ri2}(T-x) > \gamma_{ri2}(T)$
for each vertex $x$ of $T$.
\end{corollary}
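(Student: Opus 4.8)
The plan is to obtain the statement as an immediate consequence of Proposition~\ref{leaf}. Suppose, for contradiction, that there is a tree $T$ with $\gamma_{ri2}(T-x) > \gamma_{ri2}(T)$ for \emph{every} vertex $x$ of $T$. If $T$ is nontrivial, then $T$ has a leaf (indeed at least two), so I pick any leaf $x$; applying Proposition~\ref{leaf} to this particular $x$ yields $\gamma_{ri2}(T-x) \le \gamma_{ri2}(T)$, which contradicts the assumed strict inequality for that vertex. Hence no such nontrivial tree exists.

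The only point that needs a word of care is the trivial tree $T=K_1$, which has no leaf and therefore is not directly covered by Proposition~\ref{leaf}. Here $\gamma_{ri2}(K_1)=1$: assigning the value $1$ to the single vertex gives a $2$RiDF, since there is no vertex of value $0$ and the rainbow condition holds vacuously, while the value $0$ alone fails. On the other hand, $T-x$ is the empty graph, with $\gamma_{ri2}=0$. So removing the single vertex does not increase $\gamma_{ri2}$ in this case either, and $K_1$ is not a counterexample.

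I do not anticipate any genuine obstacle: the corollary is essentially a restatement of the upper bound $\gamma_{ri2}(T-x)\le \gamma_{ri2}(T)$ from Proposition~\ref{leaf}, specialized to the case in which $x$ is a leaf, combined with the elementary fact that every nontrivial tree has a leaf (so there is always at least one vertex whose deletion does not raise the parameter). The argument is therefore just a short proof by contradiction, with the trivial tree dispatched separately as above.
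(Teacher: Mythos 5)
Your proof is correct and follows exactly the route the paper intends: the corollary is stated as an immediate consequence of Proposition~\ref{leaf}, using the fact that every nontrivial tree has a leaf whose removal cannot increase $\gamma_{ri2}$. Your separate treatment of $K_1$ is a careful touch the paper omits, but the substance of the argument is the same.
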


\begin{observ} \label{obv}\cite{Sumenjak}
If $G$ is a graph without isolated vertices and $(V_0; V_1; V_2)$
is a $2RiDF$ of $G$, then every leaf of $G$ belongs to $V_1\cup V_2$.
\end{observ}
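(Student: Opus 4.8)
The plan is to argue directly from the definition of a 2RiDF by contradiction, exploiting the fact that a leaf has exactly one neighbor. Let $x$ be a leaf of $G$. Since $G$ has no isolated vertices and $\deg(x)=1$, the vertex $x$ has a unique neighbor, say $y$, so that $N(x)=\{y\}$. Write $f$ for the 2RiDF associated with $(V_0;V_1;V_2)$, so that $V_i=\{v\in V(G):f(v)=i\}$ for $i\in\{0,1,2\}$ and these three sets partition $V(G)$.

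First I would suppose, toward a contradiction, that $x\notin V_1\cup V_2$. Since $(V_0,V_1,V_2)$ partitions $V(G)$, this forces $x\in V_0$, i.e. $f(x)=0$. By the defining condition of a 2RiDF, every vertex assigned the value $0$ must be adjacent to at least one vertex of $V_1$ and to at least one vertex of $V_2$. Because the only neighbor of $x$ is $y$, both of these adjacency demands can only be satisfied by $y$, which would require $y\in V_1$ and $y\in V_2$ simultaneously. This contradicts the fact that $V_1\cap V_2=\emptyset$, which holds because $f$ assigns $y$ a single value. Hence the assumption is untenable and $x\in V_1\cup V_2$.

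There is no genuine obstacle in this argument: the statement is an immediate consequence of the rainbow condition together with $\deg(x)=1$. The only points that need to be made explicit are that a leaf has a single neighbor, so the two separate adjacency requirements collapse onto one and the same vertex, and that $V_1$ and $V_2$ are disjoint. The hypothesis that $G$ has no isolated vertices is used only to guarantee that each leaf actually has a neighbor, so that the argument never has to deal with a vertex of empty neighborhood.
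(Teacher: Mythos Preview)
Your argument is correct and complete: a leaf assigned $0$ would need its unique neighbor to lie in both $V_1$ and $V_2$, contradicting their disjointness. The paper does not supply a proof of this observation at all---it is simply quoted from \cite{Sumenjak} as a known fact---so there is nothing to compare against; your direct verification is exactly the natural one-line justification.
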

\begin{corollary}\label{leaf2}
 Let $x$ be a leaf of a graph $G$ and $y$ its neighbor.
If $f(y) \not = 0$ for some  $\gamma_{ri2}$-function $f$ on $G$,
then $\gamma_{ri2}(G-x) < \gamma_{ri2}(G)$.
\end{corollary}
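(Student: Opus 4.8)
The plan is to convert the given $\gamma_{ri2}$-function $f$ of $G$ into a 2RiDF of $G-x$ of strictly smaller weight, from which $\gamma_{ri2}(G-x)<\gamma_{ri2}(G)$ follows at once. First I would record two facts about $f$ at the leaf $x$. Since $x$ is a leaf (and so not isolated), Observation \ref{obv} gives $f(x)\in\{1,2\}$, hence $f(x)\ge 1$. Moreover, since $xy\in E(G)$, the sets $V_1$ and $V_2$ are independent, and $f(y)\neq 0$ by hypothesis, the value $f(x)$ must be the nonzero value distinct from $f(y)$ --- although only $f(x)\ge 1$ is used in the weight estimate.

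Next I would let $g$ denote the restriction of $f$ to $V(G-x)=V(G)\setminus\{x\}$ and verify that $g$ is a 2RiDF of $G-x$. Each set $V_i(g)=V_i(f)\setminus\{x\}$ is a subset of the independent set $V_i(f)$, and therefore independent in the subgraph $G-x$. For the rainbow domination condition, let $u\in V(G-x)$ satisfy $g(u)=0$; then $f(u)=0$, so in $G$ the vertex $u$ has a neighbor in $V_1(f)$ and a neighbor in $V_2(f)$. Deleting $x$ removes only the edge $xy$, so $N_{G-x}(u)=N_G(u)$ unless $u=y$; and $g(y)=f(y)\neq 0$, so the vertex $y$ is not subject to the rainbow condition under $g$. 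Consequently $u$ still has a neighbor of $g$-value $1$ and a neighbor of $g$-value $2$ in $G-x$, so $g$ is a 2RiDF.

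Finally, $w(g)=w(f)-f(x)=\gamma_{ri2}(G)-f(x)\le\gamma_{ri2}(G)-1$, whence $\gamma_{ri2}(G-x)\le w(g)<\gamma_{ri2}(G)$. There is essentially no hard step here: the one point needing attention is that the rainbow condition could conceivably fail at the neighbor $y$ of $x$ after deletion, and this is exactly where the hypothesis $f(y)\neq 0$ enters. (Combined with Proposition \ref{leaf}, this even yields $\gamma_{ri2}(T-x)=\gamma_{ri2}(T)-1$ when $T$ is a tree, though that refinement is not required for the corollary.)
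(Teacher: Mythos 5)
Your proof is correct and is exactly the argument the paper intends (the corollary is stated without proof, as an immediate consequence of Observation \ref{obv}): restrict $f$ to $G-x$, note that only $y$'s neighborhood changes and $y$ needs no rainbow neighbors since $f(y)\neq 0$, and the weight drops by $f(x)\ge 1$. No issues.
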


\begin{prop}\label{path}\cite{Sumenjak}
$\gamma_{ri2}(P_n) = \left\lceil (n+1)/2 \right\rceil$,
$\gamma_{ri2}(C_m) = \left\lceil m/2 \right\rceil$
if $m\equiv 0,3\pmod 4$  and
$\gamma_{ri2}(C_m) = \left\lceil m/2 + 1 \right\rceil$
if $m\equiv 1,2\pmod 4$.
\end{prop}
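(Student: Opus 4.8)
The plan is to prove matching upper and lower bounds on the weight of a minimum $2RiDF$, the upper bounds by explicit block constructions and the lower bounds by a short structural analysis. I would first record a few elementary facts about any $2RiDF$ $f=(V_0;V_1;V_2)$ of a path or a cycle. A vertex of $V_0$ has a neighbour in $V_1$ \emph{and} a neighbour in $V_2$, so every leaf lies in $V_1\cup V_2$; two adjacent vertices cannot both be in $V_1$, nor both in $V_2$, so consecutive nonzero vertices carry distinct labels; and no two consecutive vertices along the path or cycle can both lie in $V_0$, because the first of them would otherwise have only one nonzero neighbour. Hence, listing the nonzero vertices $u_1,u_2,\dots,u_t$ (where $t=w(f)$) in their natural (cyclic) order, any two consecutive ones are at distance at most $2$.

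For the upper bounds I would concatenate copies of the length-$4$ block $1,0,2,0$ and patch the ends. For $P_n$, writing $n=4k+r$ with $r\in\{0,1,2,3\}$, repeat the block $k$ times and finish with one of $1$; $\,1,2$; $\,1,0,2$; $\,1,0,2,1$ according to $r$ (with a small shift when $r=0$); for $C_m$ one uses analogous wrap-around patches in the four residue classes. In each case a direct verification — the only delicate spots being the junctions, where one checks independence of $V_1$ and of $V_2$ and the rainbow condition on the zeros — shows that the resulting $2RiDF$ has weight equal to the claimed right-hand side, establishing $\gamma_{ri2}(P_n)\le\lceil (n+1)/2\rceil$ and the stated upper bounds for $\gamma_{ri2}(C_m)$.

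For the lower bounds, put $t=w(f)$ and let $u_1,\dots,u_t$ be the nonzero vertices in order. The distance-at-most-$2$ property gives $n\le 2t-1$ for $P_n$ (both endpoints of a path are nonzero leaves) and $m\le 2t$ for $C_m$, which already yields $\gamma_{ri2}(P_n)\ge\lceil(n+1)/2\rceil$ and $\gamma_{ri2}(C_m)\ge\lceil m/2\rceil$; this settles $P_n$ and also $C_m$ for $m\equiv 0,3\pmod 4$. The one remaining point is to gain a further unit when $m\equiv 1,2\pmod 4$. Suppose to the contrary that $t=\lceil m/2\rceil$. Then the $V_0$-vertices, at most one per cyclic gap between consecutive nonzero vertices, fill all but at most one of the gaps; a gap containing a single zero forces its two bounding vertices to receive different labels, and the at most one empty gap consists of an adjacent pair $u_i,u_{i+1}$, which also have different labels. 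Thus $u_1,\dots,u_t$, read cyclically, form a proper $2$-colouring of a cycle — of $C_m$ when $m$ is even (here $t=m/2$) and of $C_{(m+1)/2}$ when $m$ is odd (here $t=(m+1)/2$) — whose length is odd precisely when $m\equiv 1,2\pmod 4$. Since an odd cycle has no proper $2$-colouring, this is a contradiction, so $t\ge\lceil m/2\rceil+1=\lceil m/2+1\rceil$ in these cases.

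The main obstacle I anticipate is bookkeeping rather than ideas: verifying the end and seam patches in all residue classes for both families, and carrying out the parity count in the cycle lower bound cleanly. The cycle argument is the only place where a purely numerical estimate does not suffice, and there the non-$2$-colourability of odd cycles is exactly what supplies the extra unit.
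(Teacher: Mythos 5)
The paper does not prove this proposition; it is imported verbatim from \cite{Sumenjak}, so there is no internal argument to compare yours against. Judged on its own, your proof is correct and complete in all essentials: the three elementary observations (leaves are nonzero, adjacent nonzero vertices get distinct labels, no two adjacent zeros) are exactly what is needed, the block constructions give the upper bounds, the path lower bound $n\le 2t-1$ follows from the endpoints being nonzero and consecutive nonzero vertices being at distance at most $2$, and the extra unit for $m\equiv 1,2\pmod 4$ is correctly extracted from the non-$2$-colourability of an odd cycle. Two small points to tidy. First, in the parity step the cycle being properly $2$-coloured is the derived cycle on the $t$ nonzero vertices, i.e.\ $C_t$ with $t=m/2$ (resp.\ $t=(m+1)/2$); your text says ``$C_m$'' for even $m$, which is a slip, though the parenthetical ``$t=m/2$'' and the final parity claim show you mean $C_t$. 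Second, the wrap-around patches for the four residue classes of $C_m$ are asserted rather than written out; they do exist (e.g.\ $(1,0,2,0)^{k}\,1\,0\,2$ for $m=4k+3$, $(1,0,2,0)^{k}\,1\,2$ for $m=4k+2$, $(1,0,2,0)^{k-1}\,1\,0\,2\,1\,2$ for $m=4k+1$), and for a complete write-up you should display them and check the two seam vertices in each case.
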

\begin{def1} \label{exsp}
Let $P_3^i:=  v_1^iv_2^iv_3^i\;(i=1,2,\ldots,k)$, be $k \geq 2$ vertex disjoint paths and
let $S_k^{v_1}$ be the tree obtained from the union of all $P^i_3$ by adding a vertex $v_1$ and joining it to $v_1^1, v_1^2,\ldots,v_1^k$.
Clearly, $S_k^{v_1}$ is a spider with head $v_1$ and feet $P^i_3\; (1\le i\le k)$. Clearly, $\gamma_{ri2}(S_k^{v_1}) = 2k$.
\end{def1}

We will use the following definitions.
\begin{def1}
For a graph $G$, define  \\
$W_{i}(G) = \{ v\in V(G)\mid \textrm{$f(v)=i$  for any $\gamma_{ri2}(G)$-function $f$}\}$.
\end{def1}

\begin{observation}\label{esp}
If $k \geq 3$, then $S_k^{v_1}$ is a  $\gamma_{ri2}$-stable graph, $W_0(S_k^{v_1}) = \{v_1\} \cup \{v_2^1,v_2^2,\ldots,v_2^k\}$,
and for each $\gamma_{ri2}$-function $f$ on    $S_k^{v_1}$, $f(u) \not= 0$ when $u \in V(S_k^{v_1}) -  W_0(S_k^{v_1})$.
\end{observation}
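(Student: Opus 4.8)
The plan is to first determine the structure of \emph{every} $\gamma_{ri2}$-function on $S_k^{v_1}$; this yields at once the description of $W_0(S_k^{v_1})$ and the statement that no $\gamma_{ri2}$-function takes the value $0$ outside $W_0(S_k^{v_1})$. Then I would prove $\gamma_{ri2}$-stability by deleting each of the four types of vertex of $S_k^{v_1}$ and computing the 2-rainbow independent domination number of what remains, using throughout that $\gamma_{ri2}(S_k^{v_1})=2k$ (Definition~\ref{exsp}).

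\emph{Step 1: structure of the $\gamma_{ri2}$-functions.} I claim that if $f$ is a $\gamma_{ri2}$-function on $S_k^{v_1}$, then $f(v_1)=0$ and, for each $i$, $f(v_2^i)=0$ while $\{f(v_1^i),f(v_3^i)\}=\{1,2\}$. Partition $V(S_k^{v_1})$ into $\{v_1\}$ and the $k$ legs $\{v_1^i,v_2^i,v_3^i\}$. On leg $i$ the leaf $v_3^i$ is nonzero by Observation~\ref{obv}; moreover, if $f(v_2^i)=0$ then $v_2^i$ must be rainbow dominated by its only neighbours $v_1^i$ and $v_3^i$, forcing $\{f(v_1^i),f(v_3^i)\}=\{1,2\}$ and hence at least two nonzero vertices on leg $i$, while if $f(v_2^i)\neq 0$ then $v_2^i$ and $v_3^i$ already give two nonzero vertices there. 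Summing over the legs yields $w(f)\ge 2k$; since $w(f)=\gamma_{ri2}(S_k^{v_1})=2k$, equality holds throughout, so $f(v_1)=0$ and each leg carries exactly two nonzero vertices. As $v_3^i$ is one of them, exactly one of $v_1^i,v_2^i$ is nonzero; if it were $v_2^i$, then $v_1^i$ would have value $0$ with neighbours $v_1$ (value $0$) and $v_2^i$ (a single nonzero value) and could not be rainbow dominated. Hence $f(v_1^i)\neq 0$ and $f(v_2^i)=0$, and rainbow domination of $v_2^i$ then forces $\{f(v_1^i),f(v_3^i)\}=\{1,2\}$. Consequently the zero set $V_0$ of every $\gamma_{ri2}$-function equals $\{v_1\}\cup\{v_2^1,\dots,v_2^k\}$, which is therefore $W_0(S_k^{v_1})$, and every vertex of $V(S_k^{v_1})-W_0(S_k^{v_1})=\{v_1^i,v_3^i : 1\le i\le k\}$ receives a nonzero value under every $\gamma_{ri2}$-function.

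\emph{Step 2: stability.} It remains to show $\gamma_{ri2}(S_k^{v_1}-x)=2k$ for each vertex $x$. If $x=v_1$, then $S_k^{v_1}-v_1$ is $k$ disjoint copies of $P_3$, so $\gamma_{ri2}(S_k^{v_1}-v_1)=2k$ by Proposition~\ref{path} and additivity over components. If $x=v_1^i$, then $S_k^{v_1}-v_1^i$ is the disjoint union of the spider $S_{k-1}^{v_1}$ on $v_1$ together with the legs $j\neq i$ (a genuine spider, since $k-1\ge 2$) and the path $P_2=v_2^iv_3^i$, so $\gamma_{ri2}(S_k^{v_1}-v_1^i)=2(k-1)+2=2k$. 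If $x=v_3^i$, which is a leaf, then $\gamma_{ri2}(S_k^{v_1}-v_3^i)\le 2k$ by Proposition~\ref{leaf}; for the reverse inequality, partition the vertices of $S_k^{v_1}-v_3^i$ into $\{v_1\}$, the shortened leg $\{v_1^i,v_2^i\}$, and the full legs $\{v_1^j,v_2^j,v_3^j\}$ with $j\neq i$: each full leg carries at least two nonzero vertices as in Step~1, and on the shortened leg either $f(v_1^i)\neq 0$, giving two nonzero vertices there, or $f(v_1^i)=0$, in which case rainbow domination of $v_1^i$ through its neighbours $v_1$ and $v_2^i$ forces $f(v_1)\neq 0$; in both cases the total is at least $2k$. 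Finally, if $x=v_2^i$, then $S_k^{v_1}-v_2^i$ is the disjoint union of the isolated vertex $v_3^i$, which contributes exactly $1$ to the weight, and the tree $T'$ obtained from $S_k^{v_1}$ by deleting $v_2^i$ and $v_3^i$; I would show $\gamma_{ri2}(T')=2k-1$, the lower bound $2(k-1)+1$ coming from the legs $j\neq i$ together with the leaf $v_1^i$, and the matching upper bound from a 2RiDF of $T'$ that sets $f(v_1^i)=1$, colours one leg $j_0\neq i$ by $f(v_1^{j_0})=2,\ f(v_2^{j_0})=0,\ f(v_3^{j_0})=1$, colours the remaining legs $j\neq i$ by $f(v_1^j)=1,\ f(v_2^j)=0,\ f(v_3^j)=2$, and sets $f(v_1)=0$. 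Hence $\gamma_{ri2}(S_k^{v_1}-v_2^i)=(2k-1)+1=2k$, and $S_k^{v_1}$ is $\gamma_{ri2}$-stable.

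\emph{Main obstacle.} The heart of the argument is the structural claim of Step~1, which is a tight leg-by-leg counting estimate; once it is in place, three of the four deletion cases are essentially immediate. The place that genuinely uses the hypothesis $k\ge 3$ is the case $x=v_1^i$, since there one needs $S_{k-1}^{v_1}$ to be a spider with $\gamma_{ri2}=2(k-1)$ (for $k=2$ this fails: $S_1^{v_1}=P_4$ has $\gamma_{ri2}(P_4)=3\neq 2$, and indeed $S_2^{v_1}=P_7$ is not $\gamma_{ri2}$-stable). The other slightly delicate point is the case $x=v_2^i$, where one must not overlook the isolated vertex $v_3^i$ and must evaluate $\gamma_{ri2}(T')$ correctly.
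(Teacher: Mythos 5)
Your proof is correct and complete. The paper states this result as an Observation with no proof at all, so there is nothing to compare against; your argument supplies the missing verification. The leg-by-leg counting in Step 1 (each leg forces at least two nonzero values, so the total weight $2k$ pins down $f(v_1)=0$, $f(v_2^i)=0$ and $\{f(v_1^i),f(v_3^i)\}=\{1,2\}$ on every leg) correctly yields both the description of $W_0$ and the last assertion, the four deletion cases in Step 2 all check out (including the isolated vertex created by deleting $v_2^i$), and you rightly locate the only use of $k\ge 3$ in the case $x=v_1^i$, where $S_{k-1}^{v_1}$ must still be a spider with $\gamma_{ri2}=2(k-1)$.
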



\section{Preliminary results}\label{kr}
In this section we study of basic properties of of 2-rainbow independent domination in graphs.
\begin{proposition}\label{diam}
If $T$ is a tree of order $n$ with $diam(T) =d$, then
$\gamma_{ri2}(T)  \leq n - d + \left\lceil d/2 \right\rceil$.
\end{proposition}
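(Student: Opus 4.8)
The plan is to prove the bound by induction on $n-d$, taking a diametral path as a skeleton and peeling off one leaf at a time; the two ingredients are Proposition~\ref{path} (for the base case) and the inequality $\gamma_{ri2}(T)-1\le\gamma_{ri2}(T-x)$ of Proposition~\ref{leaf} (for the inductive step).

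If $n-d=1$, then $V(T)$ coincides with the vertex set of a diametral path, so $T=P_{d+1}$, and Proposition~\ref{path} gives
\[
\gamma_{ri2}(T)=\left\lceil\frac{d+2}{2}\right\rceil=1+\left\lceil\frac{d}{2}\right\rceil=(d+1)-d+\left\lceil\frac{d}{2}\right\rceil,
\]
which is exactly the asserted bound (attained with equality). Now assume $n-d\ge 2$ and fix a diametral path $P\colon v_0v_1\cdots v_d$. The first thing to check is that $T$ has a leaf $u\notin V(P)$: every interior vertex $v_1,\dots,v_{d-1}$ of $P$ has degree at least $2$ in $T$, so if $T$ had no leaf outside $V(P)$ then all leaves of $T$ would lie in $\{v_0,v_d\}$, forcing $T$ to have exactly two leaves and hence to be a path, i.e.\ $T=P_n$ with $d=n-1$, contradicting $n-d\ge2$.

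Fix such a leaf $u$. Then $P$ is still a path of $T-u$, so ${\rm diam}(T-u)\ge d$; since deleting a vertex cannot increase the diameter, ${\rm diam}(T-u)=d$. As $T-u$ has order $n-1$ with $(n-1)-d=(n-d)-1\ge1$, the induction hypothesis yields $\gamma_{ri2}(T-u)\le(n-1)-d+\lceil d/2\rceil$. Applying $\gamma_{ri2}(T)\le\gamma_{ri2}(T-u)+1$ (the left inequality of Proposition~\ref{leaf}, valid since $u$ is a leaf of the nontrivial tree $T$) gives $\gamma_{ri2}(T)\le n-d+\lceil d/2\rceil$, completing the induction.

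The argument is essentially mechanical; the only step needing a moment's care is the combinatorial observation that a leaf of $T$ lies off the diametral path (together with the remark that deleting it keeps the diameter equal to $d$), which is what lets us invoke Proposition~\ref{leaf} while keeping $d$ fixed. A more constructive alternative would be to place an optimal 2RiDF of weight $\lceil(d+2)/2\rceil$ on the diametral path and give each of the $n-d-1$ off-path vertices a nonzero label; but then one must check independence of $V_1$ and $V_2$ across all edges joining off-path vertices to the path and to one another, which is messier than the inductive route.
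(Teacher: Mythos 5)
Your proof is correct, but it takes a genuinely different route from the paper. The paper argues constructively: it fixes a $\gamma_{ri2}$-function $f$ on the diametral path $P_{d+1}$ and extends it to all of $T$ by giving every off-path vertex a nonzero label chosen according to the parity of its distance to the path (alternating $1$ and $2$ so that no two adjacent vertices share a positive value), which produces a 2RiDF of weight $(n-d-1)+\gamma_{ri2}(P_{d+1})=n-d+\lceil d/2\rceil$ — exactly the "constructive alternative" you dismiss at the end as messier. You instead induct on $n-d$: the base case $T=P_{d+1}$ comes from Proposition~\ref{path}, and the step peels off a leaf $u$ lying off the diametral path (your existence argument for such a leaf, and the observation that $\mathrm{diam}(T-u)=d$, are both sound for leaf deletion in a tree, even though "deleting a vertex cannot increase the diameter" is not true for general graphs) and invokes $\gamma_{ri2}(T)\le\gamma_{ri2}(T-u)+1$ from Proposition~\ref{leaf}. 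Your route outsources the verification work to the cited leaf inequality and so avoids checking independence and domination for an explicitly built function; the paper's route is self-contained, exhibits the extremal labelling, and — as the authors note — transfers almost verbatim to the unicyclic analogue (Proposition~\ref{diam1}), where your leaf-peeling induction would need a different base case. Both arguments are complete and correct.
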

\begin{proof}
Let $P_{d+1}=v_1v_2\ldots,v_{d+1}$ be a diametrical path in $T$ and $f$ a $\gamma_{ri2}$-function on $P_{d+1}$.
For $2\le i\le d$, assume $T_i$ is the component of $T-\{v_{i-1}v_i, v_{i+1}v_i\}$ containing $v_i$ and root $T_i$ at $v_i$. For $i\in \{2,\ldots,d\}$, define $g_i:V(T_i-\{v_i\})\to \{0,1,2\}$ as follows: if $f(v_i)=0$, then $g_i(x)=1$ when $d(v_i,x)$ is odd and $g_i(x)=2$ when $d(v_i,x)$ is even, and if $f(v_i)\neq 0$, say $f(v_i)=1$, then $g_i(x)=2$ when $d(v_i,x)$ is odd and $g_i(x)=1$ when $d(v_i,x)$ is even. Define $g:V(T)\to \{0,1,2\}$ by $g(x)=f(x)$ for $x\in V(P_{d+1})$ and $g(x)=g_i(x)$ for $x\in V(T_i)-\{v_i\}$ and each $i\in \{2,\ldots,d\}$. Clearly, $g$ is a 2RiDF of $T$ such that
the restriction of $g$ on $T[P_{d+1}]$ is $f$ and $V_0^g=V_0^f$.
Hence $\gamma_{ri2}(T) \leq \omega(g) = n-d-1+\gamma_{ri2}(P_{d+1})$ and the result follows by Proposition \ref{path}.
\end{proof}
Next result is an immediate consequence of Proposition \ref{diam}.

\begin{corollary}\label{n-1}
Let $T$ be a tree of order $n\ge 4$. Then $\gamma_{ri2}(T) = n-1$
if and only if $T$ is a star or a double star $DS_{1,n-3}$.
\end{corollary}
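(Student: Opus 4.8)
The plan is to establish the characterization by proving both directions, with the reverse direction being the easier bookkeeping exercise.

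First, for the "if" direction, I would simply verify that stars $K_{1,n-1}$ and double stars $DS_{1,n-3}$ realize $\gamma_{ri2}(T) = n-1$. For a star with center $c$ and leaves $\ell_1,\dots,\ell_{n-1}$: by Observation \ref{obv}, every leaf must receive a nonzero value, so at least $n-1$ vertices get a positive label; assigning $1$ to all leaves and $0$ to the center is not a valid 2RiDF (the center sees no vertex labeled $2$), so in fact we need weight at least $n-1$ and the assignment giving $1$ to all leaves and $1$ to the center (weight $n$) — wait, more carefully: label $n-2$ leaves with $1$, one leaf with $2$, and the center with... the center is then $0$ but adjacent to both a $1$ and a $2$, giving weight $n-1$. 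So $\gamma_{ri2}(K_{1,n-1}) \le n-1$, and equality follows since all leaves are forced positive. For $DS_{1,n-3}$ with support vertices $x$ (adjacent to one leaf) and $y$ (adjacent to $n-3$ leaves), all $n-2$ leaves are forced positive by Observation \ref{obv}; one checks the two support vertices cannot both be $0$ (the single leaf neighbor of $x$ together with $x$'s only other neighbor $y$ would need to cover both colors, feasible only with care), and a short case analysis pins $\gamma_{ri2} = n-1$.

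For the "only if" direction, the cleanest route is via Proposition \ref{diam}: if $\gamma_{ri2}(T) = n-1$, then $n - 1 \le n - d + \lceil d/2 \rceil$, i.e. $d - \lceil d/2 \rceil \le 1$, i.e. $\lfloor d/2 \rfloor \le 1$, so $d \le 3$. A tree on $n \ge 4$ vertices with diameter $d \le 3$: if $d = 1$ then $n = 2$, excluded; if $d = 2$ then $T$ is a star; if $d = 3$ then $T$ is a double star $DS_{p,q}$ with $p + q = n - 2$ and $p, q \ge 1$. It remains to rule out $DS_{p,q}$ with $p, q \ge 2$ (and to double check $DS_{1,q}$ works, done above). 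For $DS_{p,q}$ with $p,q \ge 2$ and support vertices $x, y$: assign $1$ to all $p+q$ leaves, and now $x$ and $y$ are adjacent and each adjacent to leaves labeled $1$; we need each to see a $2$, so set $f(x) = 2$? No — then $x \in V_2$ but $x$ is adjacent to $y$, and we'd need $y$ to see a $2$, which it does (namely $x$). But $x$ being labeled $2$ contributes weight. Better: relabel one leaf of $x$ as $2$ and one leaf of $y$ as $2$; then $x$ (if $0$) sees a $1$ and a $2$ among its leaves, similarly $y$; total weight still $p + q = n - 2 < n - 1$. This shows $\gamma_{ri2}(DS_{p,q}) \le n - 2$ when $p, q \ge 2$, completing the characterization.

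The main obstacle I anticipate is the finicky case analysis distinguishing $DS_{1,q}$ (weight $n-1$) from $DS_{p,q}$ with $p,q\ge 2$ (weight $\le n-2$): one must carefully argue the lower bound $\gamma_{ri2}(DS_{1,n-3}) \ge n-1$, using that the unique leaf at the small support vertex $x$ forces structure — either $x$ is positive (adding to the $n-2$ forced leaf weights but then possibly letting $y$ be $0$) or $x = 0$, in which case $x$'s two neighbors (its leaf and $y$) must jointly cover colors $1$ and $2$, forcing $f(y) \ne 0$; in every case the weight reaches $n-1$. Handling the boundary value $n = 4$ (where $DS_{1,1} = P_4$ and $\gamma_{ri2}(P_4) = 3 = n-1$ by Proposition \ref{path}, consistent with the double star claim) is a useful sanity check rather than a genuine difficulty.
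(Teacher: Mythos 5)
Your proof is correct and follows exactly the route the paper intends: the paper offers no written proof, stating only that the corollary is an immediate consequence of Proposition \ref{diam}, and your argument (diameter $\le 3$ from the bound, then a direct computation of $\gamma_{ri2}$ for stars and the two kinds of double stars) is precisely how that consequence is realized. The case analysis for $DS_{1,q}$ versus $DS_{p,q}$ with $p,q\ge 2$ is handled correctly via Observation \ref{obv} forcing all leaves positive.
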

The proof of next result is similar to the proof of Proposition \ref{diam} and therefore omitted.
\begin{proposition}\label{diam1}
If $G$ is a unicyclic  graph of order $n$ with girth $m$, then $\gamma_{ri2}(G) = n-\left\lfloor m/2 \right\rfloor$
when $m\equiv 0,3\pmod 4$  and
$\gamma_{ri2}(G) = n+1-\left\lfloor m/2 + 1 \right\rfloor$
when $m\equiv 1,2\pmod 4$.
\end{proposition}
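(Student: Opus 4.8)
The plan is to follow the strategy of Proposition~\ref{diam}, with the unique cycle of $G$ playing the role of the diametrical path. Write $C_m=v_1v_2\cdots v_mv_1$ for the unique cycle of $G$. Since $G$ is unicyclic, deleting the edges of $C_m$ leaves a forest whose components are trees $T_1,\dots,T_m$, exactly one of them, $T_i$, containing each $v_i$; root $T_i$ at $v_i$. Then $n=m+\sum_{i=1}^m\big(|V(T_i)|-1\big)$, so there are $n-m$ vertices off the cycle; also no chord can join two cycle vertices (it would create a second cycle), so $G[V(C_m)]=C_m$.

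For the upper bound I would take a $\gamma_{ri2}$-function $f$ of $G[V(C_m)]=C_m$ and extend it to $G$ exactly as in the proof of Proposition~\ref{diam}: on each $V(T_i)-\{v_i\}$, colour a vertex $x$ by the parity of $d_{T_i}(v_i,x)$, putting at distance $1$ from $v_i$ the nonzero value missing at $v_i$ when $f(v_i)=0$ and the nonzero value different from $f(v_i)$ when $f(v_i)\ne 0$, and alternating the two nonzero values afterwards. Let $g$ equal $f$ on $V(C_m)$ and equal this rule on each $V(T_i)-\{v_i\}$. Then each class $g^{-1}(1),g^{-1}(2)$ is independent — inside $C_m$ since $f$ is a $2RiDF$ of $C_m$, and inside each $T_i$ together with the edges from $v_i$ to its children by the parity rule — while every vertex with $g$-value $0$ is some $v_i$ with $f(v_i)=0$, whose cycle-neighbours $v_{i-1},v_{i+1}$ already carry both nonzero colours. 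Hence $g$ is a $2RiDF$ of $G$ with $V_0^{g}=V_0^{f}$, so $\gamma_{ri2}(G)\le\omega(g)=n-|V_0^{f}|=n-m+\gamma_{ri2}(C_m)$; and Proposition~\ref{path} rewrites $n-m+\gamma_{ri2}(C_m)$ as the value asserted in the statement, according to the residue of $m$ modulo~$4$.

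The remaining point, and the one I expect to be the main obstacle, is the reverse inequality: that every $\gamma_{ri2}$-function $f$ of $G$ has $|V_0^{f}|\le m-\gamma_{ri2}(C_m)$. Unlike the one-sided estimate in Proposition~\ref{diam}, this does not follow from the construction and requires the zero-set to be controlled globally. The line of attack I would take is to partition $V_0^{f}$ into the zero-vertices on $C_m$ and those lying strictly inside the pendant trees; by Observation~\ref{obv} (no leaf is zero) together with a short induction on depth, each $T_i$ can contain only a bounded number of zero-vertices, since every zero-vertex needs two differently coloured neighbours; and the zero cycle-vertices, after one separates off those that receive one of their two colours from inside a pendant tree, should be forced to sit around $C_m$ in a pattern close enough to one realising $\gamma_{ri2}(C_m)$ for the counts to match. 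Making this bookkeeping rigorous — in particular separating the cases $m\equiv 0,3$ and $m\equiv 1,2\pmod 4$, which is precisely the extra difficulty relative to Proposition~\ref{diam} — is the delicate step.
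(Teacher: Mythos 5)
Your upper-bound half is sound and is exactly what the paper intends: the authors omit the proof with the remark that it is ``similar to the proof of Proposition~\ref{diam}'', and that argument only ever produces the inequality $\gamma_{ri2}(G)\le n-m+\gamma_{ri2}(C_m)$. You are right to single out the reverse inequality as the real issue, but your proposal never establishes it --- the last paragraph is a plan for bookkeeping, not a proof --- so as a proof of the stated \emph{equality} the proposal is incomplete.

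More importantly, that gap cannot be closed, because the equality is false. Take $G$ to be the triangle $v_1v_2v_3$ with a pendant path $v_1u_1u_2$ attached, so $n=5$, $m=3\equiv 3\pmod 4$, and the proposition claims $\gamma_{ri2}(G)=5-\left\lfloor 3/2\right\rfloor=4$. But the assignment $f(u_2)=f(v_3)=1$, $f(v_1)=2$, $f(u_1)=f(v_2)=0$ is a 2RiDF of weight $3$: both $0$-vertices see a $1$ and a $2$, and the classes $\{u_2,v_3\}$ and $\{v_1\}$ are independent. The mechanism is precisely the one your sketch would have to rule out: a cycle vertex may receive one of its two colours from inside a pendant tree, which allows more zeros on and around the cycle than $C_m$ alone admits, so $|V_0^f|$ is not bounded by $m-\gamma_{ri2}(C_m)$. (Separately, the displayed formula for $m\equiv 1,2\pmod 4$ simplifies to $n-\left\lfloor m/2\right\rfloor$, which for $G=C_m$ contradicts Proposition~\ref{path}; the value produced by the construction is $n+1-\left\lfloor m/2\right\rfloor$.) Only the inequality $\gamma_{ri2}(G)\le n-m+\gamma_{ri2}(C_m)$, which you do prove, is salvageable.
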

\begin{proposition}\label{i-1}
If $S$ is a clique in a graph $G$, then $\gamma_{ri2}(G-S)\geq \gamma_{ri2}(G)-2$.
\end{proposition}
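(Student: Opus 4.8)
The plan is to take a $\gamma_{ri2}$-function $f$ on $G-S$ and extend it to a 2RiDF of $G$ of weight at most $\gamma_{ri2}(G-S)+2$; since $S$ is a clique, only a bounded amount of ``repair'' is needed. First I would note that the only vertices of $G$ not dominated correctly by $f$ (viewed as a partial function on $V(G)$) are the vertices of $S$: every vertex in $V(G)\setminus S$ keeps all of its $G-S$-neighbours, hence still sees both colours $1$ and $2$ (or is itself coloured), and every vertex of $S$ may see $f$-values only through its neighbours outside $S$, which could be insufficient. So the task reduces to fixing the clique $S$.

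The key step is to handle $S$ by spending at most $2$ extra units of weight while preserving independence of $V_1$ and $V_2$. Because $S$ is a clique, $V_1\cap S$ and $V_2\cap S$ each have size at most $1$ under any valid assignment; the natural move is to pick one vertex $s\in S$, recolour a suitable pair (or single vertex) to give colours $1$ and $2$ inside $N[s]\cap S$, and set the remaining vertices of $S$ to $0$. Concretely: if $|S|=1$, put $f(s)=1$ on that vertex (cost $+1$ at most, since $f$ already had some value there in $G-S$ — one must check $f(s)$ was not already nonzero, in which case cost is $0$); if $|S|\ge 2$, choose $s,s'\in S$, redefine the colouring on $S$ by $f(s)=1$, $f(s')=2$, and $f(x)=0$ for all other $x\in S$. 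The added weight is at most $|V_1\cap S|+|V_2\cap S|\le 2$ relative to... actually the cleanest accounting: the new weight on $S$ is exactly $2$ (or $1$ if $|S|=1$), whereas the old weight on $S$ could be $0$, so the increase is at most $2$. One must then verify this does not destroy anything outside $S$: vertices in $V(G)\setminus S$ are unaffected (their colours and their neighbourhoods in $G-S$ are untouched), and since $V_1$ restricted to $G-S$ is independent and we only add the single new vertex $s$ whose $G-S$-neighbours all lie outside $S$... here one needs that $s$'s neighbours outside $S$ are not in $V_1$; if some neighbour $u\notin S$ of $s$ has $f(u)=1$, independence of $V_1$ fails. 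This is the point that needs care.

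The main obstacle, then, is exactly this independence conflict: after recolouring $s$ to $1$, a neighbour $u\in V(G)\setminus S$ with $f(u)=1$ violates independence of $V_1$. The remedy is to choose which colour goes on which vertex of $S$ adaptively. For $|S|\ge 2$ with vertices $s,s'$: since $V_1$ and $V_2$ were independent in $G-S$ and $s$ has at most... one checks that one can always orient the pair $\{1,2\}\mapsto\{s,s'\}$ so that no conflict arises — in the worst case $s$ has a $G-S$-neighbour coloured $1$ and $s'$ has one coloured $2$, but then we may instead colour $s$ with $2$ and $s'$ with $1$; if both $s$ and $s'$ have neighbours of both colours outside $S$, we may simply set $f(s)=f(s')=0$ and $S$ is already dominated (every vertex of $S$ sees both colours through $s$ or $s'$), giving increase $0$. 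The genuinely constrained case is $|S|=1$: then $s$ is a cut-like vertex with some $G-S$-neighbour of colour $1$ and some of colour $2$ forces no legal nonzero colour; but in that case $s$ already sees both colours in $G$, so $f$ itself is already a 2RiDF of $G$ and $\gamma_{ri2}(G)\le\gamma_{ri2}(G-S)$, which is even stronger than needed. Assembling these cases yields $\gamma_{ri2}(G)\le\gamma_{ri2}(G-S)+2$, i.e. the claim. I would also double-check the edge case where $G-S$ has isolated vertices created by deleting $S$ (so $G-S$ may be trivial or empty), handling it directly by exhibiting a small 2RiDF of $G$.
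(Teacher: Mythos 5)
Your overall strategy is the same as the paper's: extend a $\gamma_{ri2}$-function $f$ of $G-S$ to a 2RiDF of $G$ by spending at most $2$ extra weight on the clique. But your case analysis for placing the two colours on $S$ has a genuine hole. You fix a pair $s,s'\in S$ \emph{first} and only then decide how to colour them, and your fallback for the bad case is false: when both $s$ and $s'$ see both colours outside $S$, you set $f(s)=f(s')=0$ and claim that ``$S$ is already dominated (every vertex of $S$ sees both colours through $s$ or $s'$).'' A vertex coloured $0$ contributes nothing to the domination of its neighbours, so a third vertex $z\in S$ that has, say, no neighbour of colour $1$ outside $S$ is left undominated, and your recipe offers no repair for it. A smaller uncovered case occurs already for $|S|=2$: if $s$ can accept no colour (it sees both outside $S$) while $s'$ can accept only colour $1$, the correct move is $f(s')=1$, $f(s)=0$, which is neither of the two options (``orient $\{1,2\}$ onto $\{s,s'\}$'' or ``set both to $0$'') that you enumerate.

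The fix --- and this is exactly what the paper does --- is to choose the recoloured vertices adaptively by what they are \emph{missing}, not in advance. If every $x\in S$ already sees both colours in $G-S$, set all of $S$ to $0$ at no extra cost. Otherwise pick $x\in S$ with no neighbour of colour $1$ (swapping the roles of the colours if necessary) and set $f(x)=1$: this is automatically legal (no independence conflict, precisely because $x$'s neighbourhood lacks colour $1$), and now every other vertex of the clique sees colour $1$ through $x$. Then repeat once inside $S-\{x\}$ with colour $2$ if some vertex there still lacks it, and set the rest of $S$ to $0$. This single idea --- the vertex you recolour with $i$ should be one whose neighbourhood lacks $i$ --- simultaneously guarantees the legality of the recolouring and the domination of the remaining clique vertices, and it is the step your proposal is missing.
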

\begin{proof}
Let $f$ be a $\gamma_{ri2}(G-S)$-function. If $\{1,2\}\subseteq \{f(u)\mid u\in N(x)\}$ for each $x\in S$, then $f$ can be extended to a 2RiDF of $G$ by assigning a 0 to all vertices in $S$ and so $\gamma_{ri2}(G)\le \gamma_{ri2}(G-S)$. Let $\{1,2\}\not\subseteq \{f(u)\mid u\in N(x)\}$ for some $x\in S$. Assume without loss of generality that $1\not\in \{f(u)\mid u\in N(x)\}$. Define $g:V(G-(S-\{x\}))\to \{0,1,2\}$ by $g(x)=1$ and $g(u)=f(u)$ otherwise. Then $1\in \{f(u)\mid u\in N(y)\}$ for each $y\in S-\{x\}$. If $2\in \{f(u)\mid u\in N(y)\}$ for each $y\in S-\{y\}$, then $g$ can be extended to a 2RiDF of $G$ by assigning a 0 to all vertices in $S-\{x\}$ and so $\gamma_{ri2}(G)\le \gamma_{ri2}(G-S)+1$. Suppose $2\not\in \{f(u)\mid u\in N(y)\}$ for some $y\in S-\{x\}$. Then $g$ can be extended to a 2RiDF of $G$ by assigning a 2 to $y$ and a 0 to all vertices in $S-\{x,y\}$ implying that $\gamma_{ri2}(G)\le \gamma_{ri2}(G-S)+2$. Thus $\gamma_{ri2}(G-S)\geq \gamma_{ri2}(G)-2$.
\end{proof}

\begin{proposition}\label{-1}
Let $x$ be a vertex of a graph $G$.
Then $$\gamma_{ri2}(G) -1 \leq \gamma_{ri2}(G-x) \leq \gamma_{ri2}(G) + \deg(x) -1.$$
\end{proposition}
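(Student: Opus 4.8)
The plan is to prove the two inequalities separately; in each case I would take a minimum-weight 2RiDF on one of the graphs $G$, $G-x$ and surgically modify it into a feasible 2RiDF on the other. (The lower bound is the general-vertex analogue of the leaf estimate in Proposition~\ref{leaf}.)

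For the lower bound $\gamma_{ri2}(G-x)\ge\gamma_{ri2}(G)-1$, equivalently $\gamma_{ri2}(G)\le\gamma_{ri2}(G-x)+1$, I would start from a $\gamma_{ri2}(G-x)$-function $f$ and extend it to $V(G)$ by assigning $x$ a suitable value. If both colors $1$ and $2$ occur on $N_G(x)$, set $f(x)=0$; then $f$ is at once a 2RiDF of $G$ of weight $\gamma_{ri2}(G-x)$, since no color class grows and no zero-vertex of $G-x$ loses a colored neighbour. Otherwise some color $i\in\{1,2\}$ is missing from $N_G(x)$; setting $f(x)=i$ keeps $V_i\cup\{x\}$ independent, leaves $V_{3-i}$ unchanged, and every zero-vertex of $G-x$ retains its colored neighbours in $G$, so we obtain a 2RiDF of $G$ of weight $\gamma_{ri2}(G-x)+1$. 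Either way the desired inequality follows.

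For the upper bound $\gamma_{ri2}(G-x)\le\gamma_{ri2}(G)+\deg(x)-1$, I would fix a $\gamma_{ri2}(G)$-function $f$ and restrict it to $G-x$. If $f(x)=0$, this restriction is already a 2RiDF of $G-x$ (deleting a zero-vertex harms no one), and since $\deg(x)\ge 1$ in this case the bound holds comfortably. If $f(x)\ne 0$, say $f(x)=1$, the restriction has weight $\gamma_{ri2}(G)-1$, and the only vertices that can now fail the domination requirement form the ``damaged set'' $B$ consisting of those neighbours $v$ of $x$ with $f(v)=0$ whose only neighbour of value $1$ in $G$ is $x$ itself. Each such $v$ still sees color $2$ in $G-x$ (its color-$2$ neighbour under $f$ cannot be $x$), so all that is needed is to supply every vertex of $B$ with a color-$1$ neighbour.

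The naive fix --- promote every vertex of $B$ to value $1$ --- can destroy the independence of the first color class, since $B$ itself need not be independent in $G-x$; circumventing this is the crux of the argument. I would instead let $I$ be a maximal independent subset of the induced subgraph $G[B]$. Then $I$ is independent, every vertex of $B$ lies in $I$ or is adjacent to $I$, and $I$ sends no edge to $V_1\setminus\{x\}$ (a vertex of $B$ with a color-$1$ neighbour other than $x$ would not be damaged). Promoting exactly the vertices of $I$ to value $1$ and keeping all other values as under $f$ then yields a 2RiDF $g$ of $G-x$: both color classes stay independent, and each surviving zero-vertex still sees color $2$ and now sees color $1$ either from an unchanged color-$1$ vertex of $f$ or --- if it lay in $B\setminus I$ --- from its neighbour in $I$. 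Since $w(g)=\gamma_{ri2}(G)-1+|I|\le\gamma_{ri2}(G)-1+|B|\le\gamma_{ri2}(G)+\deg(x)-1$, the bound follows; the case $f(x)=2$ is symmetric, completing the proof.
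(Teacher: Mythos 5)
Your proposal is correct and follows essentially the same route as the paper: the lower bound comes from extending a $\gamma_{ri2}(G-x)$-function to $x$ by either $0$ or a missing color (this is the singleton case of the paper's Proposition~\ref{i-1}), and the upper bound comes from restricting a $\gamma_{ri2}(G)$-function and repairing the damaged neighbours of $x$ by promoting an independent dominating subset of them to the color of $x$ --- your maximal independent set of $G[B]$ plays exactly the role of the paper's $i$-set of $G[A]$.
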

\begin{proof}
A close look at the proof of Proposition \ref{i-1}, leads to the left inequality.

To prove the right side inequality, let $(V_0;V_1;V_2)$ be a $\gamma_{ri2}$-function of $G$.
If $x \in V_0$, then $(V_0-\{x\};V_1;V_2)$ is a 2RiDF of $G-x$. Now
let $x \not\in V_0$ and without loss of generality suppose $x \in V_1$.
If $(V_0-\{x\};V_1;V_2)$  is a 2RiDF of $G-x$, then we are done.
Denote by $A$  the set of all neighbors of $x$ in $V_0$
such that each of them has no neighbor in $V_1- \{x\}$.
Let $B$ be an $i$-set of $G[A]$.
Then $(V_0-(B\cup\{x\});V_1 \cup B;V_2)$ is a 2RiDF of $G-x$.
It remains to note that $|B| \leq |A| \leq \deg(x)-1$.
\end{proof}
A {\em subdivision} of an edge $uv$ is obtained by replacing the
edge $uv$ with a path $uwv$, where $w$ is a new vertex.
The {\em subdivision graph} $S(G)$ is the graph obtained
from $G$ by subdividing each edge of $G$.
\begin{rem}\label{-1+}
If $T$ is a tree obtained from  $K_{1,n}\;(n \geq 3)$ centered at $x$, then
$\gamma_{ri2}(S(K_{1,n})-x) = \gamma_{ri2}(S(K_{1,n})) + \deg(x) -1$.
\end{rem}

\begin{corollary}\label{-=+}
Let $x$ be a vertex of a graph $G$.
If $f(x)=0$ for some $\gamma_{ri2}$-function $f$ on $G$, then $\gamma_{ri2}(G-x)\leq \gamma_{ri2}(G)$.
If $\gamma_{ri2}(G) < \gamma_{ri2}(G-x)$, then
$l(x) \not= 0$ for each $\gamma_{ri2}$-function $l$ on $G$.
\end{corollary}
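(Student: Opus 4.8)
The plan is to prove the first assertion directly and then observe that the second is its contrapositive.

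For the first statement, let $f=(V_0;V_1;V_2)$ be a $\gamma_{ri2}$-function on $G$ with $f(x)=0$, that is, $x\in V_0$. I would consider the restriction $f'$ of $f$ to $V(G-x)=V(G)\setminus\{x\}$, equivalently $f'=(V_0\setminus\{x\};V_1;V_2)$, and check that $f'$ is a 2RiDF of $G-x$. Independence is immediate: $V_1$ and $V_2$ are unchanged (since $x\notin V_1\cup V_2$) and independence is inherited by the subgraph $G-x$. For the domination condition, take any $u$ with $f'(u)=0$; then $u\neq x$ and $f(u)=0$, so in $G$ the vertex $u$ has a neighbor in $V_1$ and a neighbor in $V_2$. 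Because $x\in V_0$, neither of these neighbors is $x$, hence both survive in $G-x$, so $u$ still sees both colors. (In particular, if $u$ became isolated in $G-x$ it would have had $x$ as its only neighbor, which is impossible for a vertex with $f$-value $0$; so no obstruction arises from new isolated vertices.) Thus $f'$ is a 2RiDF of $G-x$ with $w(f')=|V_1|+|V_2|=w(f)=\gamma_{ri2}(G)$, giving $\gamma_{ri2}(G-x)\le\gamma_{ri2}(G)$.

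For the second statement, I would simply contrapose: if there were a $\gamma_{ri2}$-function $l$ on $G$ with $l(x)=0$, then by the first part $\gamma_{ri2}(G-x)\le\gamma_{ri2}(G)$, contradicting the hypothesis $\gamma_{ri2}(G)<\gamma_{ri2}(G-x)$. Hence $l(x)\neq 0$ for every $\gamma_{ri2}$-function $l$ on $G$.

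There is essentially no hard step here; the only point that needs a word of care is verifying that deleting $x$ cannot destroy the rainbow-domination of a $0$-vertex, and this is exactly the observation that a $0$-vertex's required $V_1$- and $V_2$-neighbors are different from $x$ when $x\in V_0$. This is the same mechanism already used in the case $x\in V_0$ of the proof of Proposition \ref{-1}.
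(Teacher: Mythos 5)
Your proof is correct and matches the paper's (implicit) argument: the corollary follows from the case $x\in V_0$ in the proof of Proposition \ref{-1}, where $(V_0-\{x\};V_1;V_2)$ is observed to be a 2RiDF of $G-x$, and the second assertion is just the contrapositive. You spell out the verification (independence inherited, and a $0$-vertex's $V_1$- and $V_2$-neighbors cannot be $x$ since $x\in V_0$) a bit more explicitly than the paper, but the mechanism is identical.
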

\begin{theorem}\label{-1-1}
Let $T$ be a tree. Then for each vertex $x\in V(T)$, $\gamma_{ri2}(T-x)= \gamma_{ri2}(T)-1$  if and only if $T \in \{K_1,K_2\}$.
\end{theorem}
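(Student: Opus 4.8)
The plan is to settle the easy implication by a direct computation and to prove the hard implication in contrapositive form, using a longest path together with the ``extend a function defined on a subtree'' technique already used in the proof of Proposition~\ref{diam}.

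For the ``if'' direction I would just observe that $\gamma_{ri2}(K_1)=1$ while $K_1-x$ is the empty graph with $\gamma_{ri2}=0$, and that $\gamma_{ri2}(K_2)=2$ (the two vertices cannot both lie in $V_1$, nor both in $V_2$, nor can either lie in $V_0$) while $K_2-x=K_1$ has $\gamma_{ri2}=1$; in each case $\gamma_{ri2}(T-x)=\gamma_{ri2}(T)-1$ for every vertex $x$.

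For the ``only if'' direction I would show that if $T$ is a tree with $n(T)\ge 3$ then some vertex $x$ has $\gamma_{ri2}(T-x)\ge\gamma_{ri2}(T)$, which already rules out $\gamma_{ri2}(T-x)=\gamma_{ri2}(T)-1$. I split into two cases. First, suppose $T$ has a strong support vertex $s$, with leaf-neighbours $u_1,\dots,u_\ell$ ($\ell\ge 2$) and remaining neighbours $t_1,\dots,t_k$. Then $T-s$ is the disjoint union of the isolated vertices $u_1,\dots,u_\ell$ and the subtrees $T_{t_1},\dots,T_{t_k}$, so $\gamma_{ri2}(T-s)=\ell+\sum_{j}\gamma_{ri2}(T_{t_j})$; starting from $\gamma_{ri2}$-functions on the $T_{t_j}$ and putting $s\mapsto 0$, $u_1\mapsto 1$, $u_2\mapsto 2$, $u_i\mapsto 1$ for $i\ge 3$ produces a 2RiDF of $T$ of weight $\gamma_{ri2}(T-s)$, whence $\gamma_{ri2}(T)\le\gamma_{ri2}(T-s)$. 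Second, suppose no support vertex of $T$ is strong; take a longest path $v_0v_1\cdots v_d$ and root $T$ at $v_d$. A longest-path argument gives that every neighbour of $v_1$ other than $v_2$ is a leaf (a non-leaf one would extend the path), so $v_0$ is the only leaf-neighbour of $v_1$; hence $\deg v_1=2$, $v_2$ is not a leaf, and $d\ge 3$. Likewise every child $w\ne v_1$ of $v_2$ has depth at most $1$ (otherwise, combined with $v_2v_3\cdots v_d$, it produces a path longer than $P_{d+1}$), so $T_w$ is a $K_1$ or, as no support is strong, a $P_2$. Now $T-v_2$ is the disjoint union of $\{v_0,v_1\}$, the subtrees $T_{w_1},\dots,T_{w_m}$ and the subtree $T^{*}$ containing $v_3$, so $\gamma_{ri2}(T-v_2)=2+\sum_i\gamma_{ri2}(T_{w_i})+\gamma_{ri2}(T^{*})$; I build a 2RiDF of $T$ of exactly this weight by putting $v_2\mapsto 0$ when $m\ge 1$ and choosing the functions on $\{v_0,v_1\}$ and on $T_{w_1}$ so that $v_1\mapsto 2$ and the root of $T_{w_1}$ gets $1$ (so $v_2$ sees both $1$ and $2$), and by putting $v_1\mapsto 0$, $\{v_0,v_2\}\mapsto\{1,2\}$ when $m=0$, with the value of $v_2$ chosen different from the value of $v_3$ in the fixed $\gamma_{ri2}$-function of $T^{*}$. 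Either way $\gamma_{ri2}(T)\le\gamma_{ri2}(T-v_2)$, which completes the contrapositive and hence the theorem.

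The routine part is verifying that the patched assignments really are 2RiDFs; the only genuinely delicate point is that independence of $V_1$ and of $V_2$ can be violated only at the single glue vertex ($s$, respectively $v_2$), and the prescribed colours on the leaves of $s$, and on $v_0,v_1,v_2$, are precisely what prevents an edge inside $V_1$ or $V_2$ there. I also need to check that the degenerate subcases are subsumed ($k=0$ in the first case; $m=0$ and $d=3$ in the second). The structural fact that makes the second case go through is that, once strong supports are excluded, the pendant subtrees hanging near the ends of a longest path are forced to be copies of $K_1$ or $P_2$.
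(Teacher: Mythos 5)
Your proof is correct. It follows the same overall strategy as the paper's --- prove the contrapositive by exhibiting, in any tree on at least three vertices, a vertex $x$ with $\gamma_{ri2}(T-x)\ge\gamma_{ri2}(T)$, found near the end of a longest path --- but the case split and the deleted vertex differ. The paper splits on $\deg(v_2)$ along a diametrical path $v_1v_2\cdots v_k$: if $\deg(v_2)\ge 3$ it deletes $v_2$ (your strong-support case, localized to the end of the path), and if $\deg(v_2)=2$ it deletes $v_3$, extending a minimum function of $T-v_3$ back to $T$ with a small label-swapping trick ($v_1\mapsto 2$, $v_2\mapsto 0$, $v_3\mapsto 1$) when no neighbour of $v_3$ carries the missing colour. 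You instead always delete $v_2$ in the degree-two case, which forces you to classify the pendant subtrees of $v_2$ (each a $K_1$ or $P_2$ once strong supports are excluded) and to compute $\gamma_{ri2}(T-v_2)$ exactly as a sum over components; the payoff is that you never need the swap, only a careful choice of colours at the glue vertex, and the inequality $\gamma_{ri2}(T)\le\gamma_{ri2}(T-v_2)$ becomes completely explicit. Both routes are sound. One small point worth stating explicitly in your second case: every tree of diameter $2$ (including $P_3$) has a strong support vertex and therefore falls under your first case, which is exactly what guarantees $d\ge 3$ there.
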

\begin{proof}
The sufficiency is clear.  Let for each vertex $x\in V(T)$ we have $\gamma_{ri2}(T-x)= \gamma_{ri2}(T)-1$. Suppose, to the contrary, that $|V(T)|\ge 3$. Let $v_1v_2\ldots v_k\;(k\ge 3)$ be a diametrical path in $T$. First let $\deg(v_2)\ge 3$ and $w\in L(v_2)-\{v_1\}$. Suppose $f$ is a $\gamma_{ri2}(T-v_2)$-function. We may assume without loss of generality that $f(v_1)=1$ and $f(w)=2$. Then $f$ can be extended to a 2RiDF of $T$ by assigning a 0 to $v_2$ which leads to a contradiction by $\gamma_{ri2}(T-v_2)= \gamma_{ri2}(T)-1$. Assume now that $\deg(v_2)=2$. Let $f$ be a $\gamma_{ri2}(T-v_3)$-function. We may assume without loss of generality that $f(v_1)=1$ and $f(v_2)=2$. If $1\in \{f(u)\mid u\in N(v_3)\}$, then $f$ can be extended to a 2RiDF of $T$ by assigning a 0 to $v_3$ which leads to a contradiction again. Suppose $1\not\in \{f(u)\mid u\in N(v_3)\}$. Then the function $h:V(T)\to \{0,1,2\}$ defined by $h(v_3)=1, h(v_2)=0, h(v_1)=2$ and $h(x)=f(x)$ otherwise, is a 2RIDF of $T$ of weight $\gamma_{ri2}(T)-1$ which is a contradiction. Thus $T \in \{K_1,K_2\}$ and the proof is complete.
\end{proof}

\begin{proposition}\label{pen3}
Let $P_3^i: v_1^iv_2^iv_3^i\;(i = 1,2,\ldots,k)$ be  $k$ vertex disjoint  paths in a graph $G$,
where  $v_3^i$ is a leaf  and $\deg(v_2^i) = \deg(v_1^i) = 2$ for each $i = 1,2,\ldots,k$.
\begin{itemize}
\item[(i)]  If $g(v_2^j) \not = 0$ for some $\gamma_{ri2}(G)$-function $g$ and some $j \in \{1,2,\ldots,k\}$,  then
there is a $\gamma_{ri2}(G)$-function $\ell$  such that
$\ell|_{G-\{v_1^j,v_2^j,v_3^j\} }= g|_{G-\{v_1^j,v_2^j,v_3^j\} },  \ell(v_2^j)=0$, and $\{\ell(v_1^j), \ell(v_3^j)\} = \{1,2\}$.
\item[(ii)]  There is a $\gamma_{ri2}$-function $f$ on $G$ with $f(v_2^i) = 0$ for $i\in\{1,2,\ldots,k\}$. 									
\end{itemize}
\end{proposition}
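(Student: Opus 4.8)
The plan is to prove part (i) first and then deduce part (ii) by iteration. For part (i), suppose $g$ is a $\gamma_{ri2}(G)$-function with $g(v_2^j)\neq 0$ for some fixed $j$. Write $a=g(v_1^j)$, $b=g(v_2^j)$, $c=g(v_3^j)$. Since $v_3^j$ is a leaf, Observation~\ref{obv} gives $c\neq 0$. The idea is to locally re-label the three vertices $v_1^j,v_2^j,v_3^j$ so that $v_2^j$ gets value $0$ while $\{$ value of $v_1^j$, value of $v_3^j\}=\{1,2\}$, without changing $g$ anywhere else and without increasing the weight. First I would observe that since $v_1^j$ has degree $2$, its only neighbors are $v_2^j$ (inside the path) and one vertex outside, call it $u^j$; and $v_2^j$ has neighbors exactly $v_1^j$ and $v_3^j$. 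Because $v_3^j$ is a leaf with $g(v_3^j)=c\neq 0$, the vertex $v_2^j$ is already ``rainbow-protected'' only through $v_1^j$ and $v_3^j$, so the constraint on a $0$ at $v_2^j$ is simply $\{g(v_1^j),g(v_3^j)\}=\{1,2\}$ after relabeling.

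The key case analysis is on the pair $(b,c)$ with $b\neq 0$, $c\neq 0$: if $\{b,c\}=\{1,2\}$ already, I set $\ell(v_2^j)=0$, $\ell(v_1^j)=c$, $\ell(v_3^j)=b$ — i.e.\ swap the labels of $v_1^j$ and $v_3^j$ — costing weight $\le 2$ versus the old weight $a+b+c\ge 2$, and I must check that this keeps $V_1,V_2$ independent (the only new adjacency concern is $v_1^j$–$u^j$, handled because $\ell(v_1^j)=c$ equals the old $g(v_3^j)$ and $v_3^j$ was a leaf so $c$ placed nothing adjacent to $u^j$ before; but $u^j$ might already carry $c$, in which case I instead keep $\ell(v_1^j)=a$ if $a\notin\{0\}$ works, or reassign) and that $v_1^j$'s old role in dominating $u^j$ or $v_2^j$ is preserved. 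If $b=c$ (both equal, say, to $1$), then I would use that $v_3^j$ is a leaf to freely change $c$: set $\ell(v_3^j)=2$ (this is safe since $v_3^j$'s only neighbor is $v_2^j$ which will become $0$), $\ell(v_2^j)=0$, and $\ell(v_1^j)=1$; if $a=0$ already the new weight is $2<a+b+c=0+1+1$? — no, equal, so weight is preserved; if $a\neq 0$ the new weight $2\le a+2$ strictly drops, contradicting minimality only if it drops, so either way $\ell$ is a $\gamma_{ri2}$-function. Throughout I keep $\ell=g$ off $\{v_1^j,v_2^j,v_3^j\}$, which automatically preserves all domination and independence conditions outside this triple; the only vertices whose local conditions could be disturbed are $u^j$, $v_1^j$, $v_2^j$, $v_3^j$, and each must be checked — this bookkeeping is the routine part.

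The main obstacle I anticipate is the independence of $V_1^\ell$ and $V_2^\ell$ at the edge $v_1^j u^j$: after relabeling $v_1^j$, its value must differ from $g(u^j)$ whenever they are in the same color class. I would resolve this by noting there are two admissible target values for $v_1^j$ in the $\{b,c\}=\{1,2\}$ case (namely we may also try $\ell(v_1^j)=b$, $\ell(v_3^j)=c$ if the other choice clashes with $u^j$ — wait, that would mean $\{$labels$\}$ is still $\{1,2\}$ only if we don't touch them, so really the swap is forced); the genuine fix is that if both $1$ and $2$ are ``blocked'' at $v_1^j$ by $u^j$ then $u^j$ would need to carry both, impossible, so at least one choice among assigning $v_1^j$ the value $1$ or the value $2$ avoids the clash — and correspondingly we set $v_3^j$ to the other value, which is always safe since $v_3^j$ is a leaf adjacent only to the now-$0$ vertex $v_2^j$. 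That flexibility from the leaf $v_3^j$ is exactly what makes the local surgery go through.

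For part (ii), I would apply part (i) successively: start with any $\gamma_{ri2}$-function $f_0$ on $G$; if $f_0(v_2^{i_1})\neq 0$ for some index, apply (i) with $j=i_1$ to get $f_1$ with $f_1(v_2^{i_1})=0$ and $f_1=f_0$ off the $i_1$-triple, hence $f_1(v_2^i)=f_0(v_2^i)$ for all other $i$; repeat. Since each step zeroes out one more $v_2^i$ and never un-zeroes a previously handled one (the modification at step $t$ only touches the triple $\{v_1^{i_t},v_2^{i_t},v_3^{i_t}\}$, disjoint from the earlier triples), after at most $k$ steps we reach a $\gamma_{ri2}$-function $f$ with $f(v_2^i)=0$ for all $i\in\{1,\ldots,k\}$. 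This termination argument is immediate from the disjointness of the paths $P_3^i$, so I expect no difficulty there.
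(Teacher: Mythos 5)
Your proposal is correct and follows essentially the same local-exchange strategy as the paper: modify $g$ only on the triple $\{v_1^j,v_2^j,v_3^j\}$, exploiting the freedom at the leaf $v_3^j$ and the two admissible values at $v_1^j$ to avoid a colour clash with the outside neighbour $u^j$, and then handle (ii) by treating the disjoint triples one at a time. Two blemishes worth fixing: the weight is $|V_1|+|V_2|$, a vertex count, not the sum $a+b+c$ of the labels (your numerical comparisons happen to reach the right conclusion, but for the wrong quantity), and your case $b=c$ is vacuous since $v_2^jv_3^j\in E(G)$ forces $\{g(v_2^j),g(v_3^j)\}=\{1,2\}$ by independence; the paper instead observes that minimality forces $g(v_1^j)=0$, whence domination of $v_1^j$ forces $g(u^j)$ to be the colour missing at $v_2^j$, which pins down the exchange uniquely and makes the independence and domination checks you defer as ``routine bookkeeping'' immediate.
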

\begin{proof}
(i)  Assume without loss of generality that $g(v_2^1)=2$ for some $\gamma_{ri2}(G)$-function $g$.
By Observation \ref{obv}, we have $f(v_3^1)=1$. Since $g$ is a $\gamma_{ri2}(G)$-function, $g(v_1^1) = 0$.
Let $N(v_1^1 )= \{v_2^1, u\}$. To dominate $v_1^1$, we must have $g(u)=1$. Then the function $\ell:V(G)\to \{0,1,2\}$ defined by $\ell(v_1^1)=2, \ell(v_2^1)=0$ and $\ell(x)=g(x)$ otherwise, is a $\gamma_{ri2}(G)$-function satisfying the condition.
	
(ii) Suppose $f$ is $\gamma_{ri2}$-function on $G$ such that $s=|\{v_2^j\mid g(v_2^j)\neq 0\}|$ is as small as possible.
By (i) we obtain $s=0$ and we are done.
\end{proof}

\begin{proposition}\label{k12}
Let $G$ be a graph and $y\in V(G)$. If $G_y$ is a graph obtained from $G$
by adding a path $v_2v_1v_3$ and possibly joining $v_1$ to $y$, then  $\gamma_{ri2}(G_y) = \gamma_{ri2}(G) + 2$.
\end{proposition}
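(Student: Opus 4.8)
The statement has the shape "$\gamma_{ri2}(G_y) = \gamma_{ri2}(G) + 2$," so I would prove the two inequalities separately.

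For the upper bound $\gamma_{ri2}(G_y) \le \gamma_{ri2}(G) + 2$, I would start from a $\gamma_{ri2}$-function $f$ on $G$ and extend it to the three new vertices. The path $v_2 v_1 v_3$ is attached at $v_1$, with $v_2, v_3$ being new leaves adjacent only to $v_1$, and $v_1$ possibly also adjacent to $y \in V(G)$. By Observation~\ref{obv} the two new leaves must receive nonzero values, so the cheapest thing to do is put $f(v_2) = 1$, $f(v_3) = 2$ (keeping $V_1, V_2$ independent since $v_2, v_3$ are nonadjacent), and set $f(v_1) = 0$; then $v_1$ is rainbow-dominated by $v_2$ and $v_3$, and all vertices of $G$ are still properly handled because their neighborhoods inside $G$ are unchanged. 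This adds exactly weight $2$, giving the bound. The only subtlety is to confirm independence is not violated by the edge $v_1 y$ — but $f(v_1) = 0$ so that is automatic.

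For the lower bound $\gamma_{ri2}(G_y) \ge \gamma_{ri2}(G) + 2$, I would take a $\gamma_{ri2}$-function $h$ on $G_y$ and restrict it to $G$, showing the three new vertices carry weight at least $2$ while the restriction to $G$ is a valid 2RiDF of $G$. Since $v_2$ and $v_3$ are leaves of $G_y$, Observation~\ref{obv} forces $h(v_2), h(v_3) \ne 0$, so $w(h) \ge w(h|_G) + 2$. The real work is to check that $h|_G$ is a 2RiDF of $G$: every vertex of $G$ that had $h$-value $0$ must be rainbow-dominated \emph{within $G$}. The only vertex of $G$ whose $G_y$-neighborhood differs from its $G$-neighborhood is $y$ (via the possible edge $v_1 y$). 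So I need: if $h(y) = 0$ and $y$ relied on $v_1$ for one of its two colors, then I can repair $h|_G$ cheaply. This is where I expect the main obstacle. The fix is a small case analysis in the spirit of Proposition~\ref{pen3}(i): if $h(y) = 0$ and $h(v_1) = i \ne 0$ while $y$ has no neighbor of color $i$ in $G$, then among $\{h(v_2), h(v_3)\} = \{1,2\}$ (they are forced to use both colors once $h(v_1) \ne 0$, else $v_1$'s own domination or the independence of $V_i$ fails appropriately) one can argue $h(v_1)$ could have been shifted — more precisely, I would show directly that $h$ was not of minimum weight unless $h(v_1) = 0$, by exhibiting a lighter or equal-weight function with $h(v_1) = 0$ (set $v_1 \mapsto 0$, and since $v_2, v_3$ already cover both colors for $v_1$, and $y$ gains nothing from $v_1$ anyway once we re-examine, no compensation is needed). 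Thus without loss of generality $h(v_1) = 0$, and then $h|_G$ is immediately a 2RiDF of $G$ because $y$'s coloring constraints were met using neighbors in $G$ only.

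So the skeleton is: (1) upper bound by explicit extension adding weight $2$; (2) for the lower bound, argue we may assume $h(v_1) = 0$ in some minimum 2RiDF of $G_y$ (the crux, via a local exchange argument); (3) conclude $h|_G$ is a 2RiDF of $G$ of weight $w(h) - 2$. The hard part is step (2): handling the case where $v_1$ is adjacent to $y$ and $y$ is colored $0$, ensuring the exchange that zeroes out $v_1$ does not break $y$'s rainbow condition. I would resolve it by noting that whenever $h(v_1) \ne 0$, the leaves $v_2, v_3$ must already realize \emph{both} colors $1$ and $2$ in $N(v_1)$ (otherwise $v_1$ fails to be rainbow-dominated, since its only other possible neighbor $y$ supplies at most one color and $h(v_1) \ne 0$ means $v_1 \notin V_0$ is fine but then $v_1$ being in $V_i$ forces $v_2$ or $v_3$ equal to $i$, violating independence) — pinning down this forcing is the one genuinely fiddly computation, but it is finite and local.
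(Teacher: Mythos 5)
Your upper bound is fine and matches the paper. The gap is in your step (2), the normalization ``we may assume $h(v_1)=0$.'' Your justification rests on the claim that $h(v_1)\neq 0$ forces $\{h(v_2),h(v_3)\}=\{1,2\}$, and this is false: if $h(v_1)=i\neq 0$, then independence of $V_i$ forbids $v_2,v_3$ from taking the value $i$, and Observation~\ref{obv} forbids them from taking $0$, so \emph{both} leaves are forced to the single value $3-i$. Consequently your proposed exchange ``set $v_1\mapsto 0$, no compensation needed'' breaks down twice in the hard case $h(y)=0$: first, $v_1$ itself, once placed in $V_0$, has neighbors of color $3-i$ only ($y$ contributes nothing), so $v_1$ is no longer rainbow-dominated; second, if $y$ had relied on $v_1$ for color $i$, zeroing $v_1$ also destroys $y$'s domination. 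To make the exchange work you must recolor one of $v_2,v_3$ to restore both colors at $v_1$ \emph{and} transfer the color $i$ onto $y$ when $y$ has no other neighbor of color $i$ in $G$; only then do you get a minimum 2RiDF of $G_y$ with $h(v_1)=0$.

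The paper avoids the normalization altogether: it keeps $h$ as is, notes that $h|_G$ already works unless $h(v_1)\neq 0$, $h(y)=0$ and $h(v_1)\notin\{h(u)\mid u\in N_G(y)\}$, and in that remaining case defines $g$ on $G$ by $g(y)=h(v_1)$ and $g=h$ elsewhere. Independence of $g$ holds precisely because $y$ has no $G$-neighbor of color $h(v_1)$, and the weight bookkeeping gives $w(g)=w(h)-3+1=w(h)-2$, yielding the lower bound directly. Your skeleton can be repaired to reach the same conclusion, but as written the crux step is unsupported and the specific local computation it relies on is incorrect.
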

\begin{proof}
If $v_1$ is not joined to $y$, then clearly $\gamma_{ri2}(G_y)=\gamma_{ri2}(G) +2$. Suppose $v_1$ is joined to $y$. Clearly, any $\gamma_{ri2}(G)$-function can be extended to a {\rm 2RiDF} of $G_y$ by assigning a 1 to $v_2$, a 2 to $v_3$ and a 0 to $v_1$ and this implies that $\gamma_{ri2}(G_y) \leq \gamma_{ri2}(G) +2$.

Now, let $h$ be a $\gamma_{ri2}$-function on $G_y$. By Observation \ref{obv}, $v_2,v_3\in V_1\cup V_2$. If $h(v_1)=0$ or $h(y)\neq 0$ or ($h(v_1)\neq 0, h(y)=0$ and $h(v_1)\in \{h(u)\mid u\in N_G(y)\}$), then the function $h$ restricted to $G$ is a $2RiDF$ of $G$ and so $\gamma_{ri2}(G_y) \geq \gamma_{ri2}(G) +2$. Hence we assume $h(v_1)\neq 0, h(y)=0$ and $h(v_1)\not\in \{h(u)\mid u\in N_G(y)\}$. Then the function $g:V(G)\to \{0,1,2\}$ defined by $g(y)=h(v_1)$ and $g(u)=h(u)$ otherwise, is a {\rm 2RiDF} of $G$ and so $\gamma_{ri2}(G_y) \geq \gamma_{ri2}(G) +2$. Thus $\gamma_{ri2}(G_y)=\gamma_{ri2}(G) +2$.
\end{proof}

\begin{proposition}\label{k13}
Let $G$ be a graph and $y\in V(G)$. If $G_y$ is a graph obtained from $G$
by adding a path $v_5v_4v_3 v_2v_1$ and possibly joining $v_4$ to $y$, then $\gamma_{ri2}(G_y) = \gamma_{ri2}(G) + 3$.
\end{proposition}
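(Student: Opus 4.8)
The plan is to follow the template of Proposition~\ref{k12}, adding one extra counting ingredient. If $v_4$ is not joined to $y$, then $G_y$ is the disjoint union of $G$ and $P_5$, so $\gamma_{ri2}(G_y)=\gamma_{ri2}(G)+\gamma_{ri2}(P_5)=\gamma_{ri2}(G)+3$ by Proposition~\ref{path}. Hence from now on assume $v_4$ is adjacent to $y$, and note that $v_4y$ is the only edge joining $\{v_1,\ldots,v_5\}$ to $V(G)$, while $v_1$ and $v_5$ are leaves of $G_y$.

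For the upper bound, extend an arbitrary $\gamma_{ri2}(G)$-function $f$ to $G_y$ by setting $f(v_1)=1$, $f(v_2)=0$, $f(v_3)=2$, $f(v_4)=0$, $f(v_5)=1$. A routine check shows this is a 2RiDF of $G_y$: both $v_2$ and $v_4$ see labels $1$ and $2$ among their neighbours, the new $1$-vertices $v_1,v_5$ are nonadjacent and have only $0$-labelled neighbours, $v_3$ is the only new $2$-vertex, and the labelling of $G$ is untouched. Its weight is $w(f)+3$, so $\gamma_{ri2}(G_y)\le\gamma_{ri2}(G)+3$.

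For the lower bound, let $h$ be a $\gamma_{ri2}(G_y)$-function and put $S=\{v_1,v_2,v_3,v_4,v_5\}$. The two facts I would establish first are: (a) $w(h|_S)\ge 3$; and (b) $w(h|_S)\ge 4$ whenever $h(v_4)\ne 0$. Both follow from Observation~\ref{obv} (which gives $h(v_1),h(v_5)\ne 0$) together with the elementary remarks that $h(v_3)=0$ forces $h(v_2),h(v_4)\ne 0$ and $h(v_2)=0$ forces $h(v_1),h(v_3)\ne 0$. Now split cases exactly as in Proposition~\ref{k12}. If $h(v_4)=0$, or $h(y)\ne 0$, or $h(v_4)\ne 0=h(y)$ with $h(v_4)\in\{h(u)\mid u\in N_G(y)\}$, then the restriction of $h$ to $V(G)$ is itself a 2RiDF of $G$ --- the only place deleting $S$ could damage $G$ is at $y$, and in each of these subcases $y$ is already rainbow-dominated inside $G$ (or needs no domination) --- so $\gamma_{ri2}(G)\le w(h)-w(h|_S)\le w(h)-3$ by (a). In the one remaining case $h(v_4)\ne 0=h(y)$ and $h(v_4)\notin\{h(u)\mid u\in N_G(y)\}$; say $h(v_4)=1$. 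Then the function $g$ on $V(G)$ obtained from $h$ by resetting $g(y)=1$ is a 2RiDF of $G$: no $G$-neighbour of $y$ carries label $1$, so $V_1^g$ stays independent, and raising $y$ from $0$ to $1$ can only help the domination of the other vertices, all of whose neighbours lie in $V(G)$. Since $h(v_4)\ne 0$, fact (b) gives $w(h|_S)\ge 4$, whence $\gamma_{ri2}(G)\le w(g)=w(h)-w(h|_S)+1\le w(h)-3$. In every case $\gamma_{ri2}(G_y)=w(h)\ge\gamma_{ri2}(G)+3$, and with the upper bound this yields equality.

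The step I expect to be the real point is isolating fact (b): precisely in the awkward case where $y$ is labelled $0$ and relies on $v_4$ for one of its colours we must recover an extra unit of weight, and it is exactly the pendant path $v_4v_3v_2v_1$ ending in the leaf $v_1$ that supplies it. Everything else is bookkeeping parallel to Proposition~\ref{k12}.
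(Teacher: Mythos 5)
Your proof is correct and follows essentially the same route as the paper: the same upper-bound extension and the same case split on whether $y$ is labelled $0$ and relies on $v_4$ for a colour, resolved by the same relabelling $g(y)=h(v_4)$. The only difference is cosmetic: where you prove the weight bounds (a) and (b) on $h|_S$ directly, the paper first normalizes $h$ on the pendant path $v_3v_2v_1$ via Proposition~\ref{pen3} (so that $h(v_1)=1$, $h(v_2)=0$, $h(v_3)=2$) and then counts $v_5$ and, in the awkward case, $v_4$.
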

\begin{proof}
If $v_4$ is not joined to $y$, then clearly $\gamma_{ri2}(G_y)=\gamma_{ri2}(G) +3$. Suppose $v_4$ is joined to $y$. Clearly, any $\gamma_{ri2}(G)$-function can be extended to a $2RiDF$ of $G_y$ by assigning a 1 to $v_1,v_5$, a 2 to $v_3$ and a 0 to $v_2,v_4$, and this implies that $\gamma_{ri2}(G_y) \leq \gamma_{ri2}(G) +3$.

Now, let $h$ be a $\gamma_{ri2}$-function on $G_y$. By Proposition \ref{pen3}, we may assume that $h(v_1)=1, h(v_2)=0$ and $h(v_3)=2$. Also by Observation \ref{obv}, $h(v_5)\neq 0$. If $h(v_4)=0$ or $h(y)\neq 0$ or ($h(v_4)\neq 0, h(y)=0$ and $h(v_4)\in \{h(u)\mid u\in N_G(y)\}$), then the function $h$ restricted to $G$ is a $2RiDF$ of $G$ and so $\gamma_{ri2}(G_y) \geq \gamma_{ri2}(G) +3$. Hence we assume $h(v_4)\neq 0, h(y)=0$ and $h(v_4)\not\in \{h(u)\mid u\in N_G(y)\}$. Then the function $g:V(G)\to \{0,1,2\}$ defined by $g(y)=h(v_4)$ and $g(u)=h(u)$ otherwise, is a 2RiDF of $G$ and so $\gamma_{ri2}(G_y) \geq \gamma_{ri2}(G) +3$. Thus $\gamma_{ri2}(G_y)=\gamma_{ri2}(G) +3$.
\end{proof}
\begin{proposition}\label{spider}
Let $G$ be a graph and $y\in V(G)$. If $G_y$ is a graph obtained from $G$ by adding $S_k^{v_1}\; (k\ge 2)$  and possibly joining  $v_1$ to $y$, then   $\gamma_{ri2}(G_y) = \gamma_{ri2}(G) + 2k$.
\end{proposition}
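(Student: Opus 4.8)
The plan is to prove the two inequalities separately, imitating the proofs of Propositions~\ref{k12} and~\ref{k13}. If $v_1$ is not joined to $y$, then $G_y$ is the disjoint union of $G$ and $S_k^{v_1}$, so $\gamma_{ri2}(G_y)=\gamma_{ri2}(G)+\gamma_{ri2}(S_k^{v_1})=\gamma_{ri2}(G)+2k$ by Definition~\ref{exsp}; hence from now on I assume $v_1y\in E(G_y)$.

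For the upper bound, take a $\gamma_{ri2}(G)$-function $f$ and extend it to $G_y$ by putting $f(v_1)=0$, $f(v_2^i)=0$ for every $i$, and assigning to each pair $\{v_1^i,v_3^i\}$ the two labels $\{1,2\}$, with the choice $f(v_1^1)=1$ and $f(v_1^2)=2$ (legitimate since $k\ge 2$). I would then check that the result is a 2RiDF of $G_y$: every $v_2^i$ has both a $1$-neighbour and a $2$-neighbour among $\{v_1^i,v_3^i\}$, the vertex $v_1$ sees the label $1$ at $v_1^1$ and the label $2$ at $v_1^2$, and $V_1$ and $V_2$ stay independent because the added vertices induce a union of paths $v_1^iv_2^iv_3^i$ with the central vertex $v_2^i$ labelled $0$, while $v_1$ (the only new vertex adjacent to $V(G)$) is labelled $0$. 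This extension increases the weight by exactly $2k$, so $\gamma_{ri2}(G_y)\le \gamma_{ri2}(G)+2k$.

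For the lower bound, let $h$ be a $\gamma_{ri2}(G_y)$-function. In $G_y$ each path $P_3^i=v_1^iv_2^iv_3^i$ satisfies the hypotheses of Proposition~\ref{pen3} ($v_3^i$ is a leaf and $\deg(v_2^i)=\deg(v_1^i)=2$), so by part~(ii) I may assume $h(v_2^i)=0$ for all $i$; then $v_2^i$ forces $\{h(v_1^i),h(v_3^i)\}=\{1,2\}$, so the total weight of $h$ on the spider equals $2k$ if $h(v_1)=0$ and $2k+1$ if $h(v_1)\ne 0$. If $h(v_1)=0$, the restriction of $h$ to $G$ is a 2RiDF of $G$ (deleting $v_1$ cannot hurt the domination of $y$ since $v_1\notin V_1\cup V_2$), whence $\gamma_{ri2}(G_y)=w(h)\ge \gamma_{ri2}(G)+2k$. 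If $h(v_1)\ne 0$, say $h(v_1)=1$, I split into two subcases exactly as in Proposition~\ref{k12}: if $h(y)\ne 0$ or $1\in\{h(u)\mid u\in N_G(y)\}$, then the restriction of $h$ to $G$ is again a 2RiDF of $G$ and $w(h)\ge \gamma_{ri2}(G)+2k+1$; otherwise $h(y)=0$ and $1\notin\{h(u)\mid u\in N_G(y)\}$, and I define $g\colon V(G)\to\{0,1,2\}$ by $g(y)=h(v_1)=1$ and $g(u)=h(u)$ for $u\ne y$, check that $g$ is a 2RiDF of $G$ (the point being that $y$ has no $G$-neighbour with label $1$, so $V_1^g$ is independent, while the $2$-neighbour of $y$ already lies in $G$), and observe that $w(g)=w(h)-2k$, so again $w(h)\ge \gamma_{ri2}(G)+2k$. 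Combining the two bounds yields the claimed equality.

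I expect the only genuinely delicate step to be this last subcase, exactly as in Propositions~\ref{k12} and~\ref{k13}: one must make sure the surgery that moves the label $1$ from $v_1$ to $y$ keeps $V_1^g$ independent and does not destroy the domination of any $0$-vertex of $G$. This goes through smoothly here precisely because no vertex of $G$ other than $y$ is adjacent to the attached spider, so all domination requirements inside $G$ away from $y$ are untouched, and the weight bookkeeping is immediate from the normalization $h(v_2^i)=0$.
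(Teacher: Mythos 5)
Your proof is correct and follows essentially the same route as the paper: the upper bound by extending a $\gamma_{ri2}(G)$-function over the spider with $v_1$ and the $v_2^i$ set to $0$, and the lower bound by normalizing via Proposition~\ref{pen3} and then performing the same label-transfer surgery at $y$ as in Propositions~\ref{k12} and~\ref{k13}. In fact your upper-bound assignment (splitting the labels $1$ and $2$ among $v_1^1$ and $v_1^2$ so that $v_1$ sees both colours) is more careful than the paper's, which labels every $v_1^i$ with $2$ and thus leaves $v_1$'s domination unverified.
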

\begin{proof}
We use the notation  of Definition \ref{exsp}. If $v_1$ is not joined to $y$, then clearly $\gamma_{ri2}(G_y)=\gamma_{ri2}(G) +2k$. Clearly, any $\gamma_{ri2}(G)$-function can be extended to a 2RiDF of $G_y$ by assigning a 1 to $v_3^1,\ldots,v_3^k$, a 2 to $v_1^1,\ldots,v_1^k$ and a 0 to $v_1, v_2^1,\ldots,v_2^k$, and this yields $\gamma_{ri2}(G_y) \leq \gamma_{ri2}(G) +2k$.

Now, let $h$ be a $\gamma_{ri2}$-function on $G_y$. By Pproposition \ref{pen3}, we may assume that $h$ assigns a 1 to $v_3^1,\ldots,v_3^k$, a 2 to $v_1^1,\ldots,v_1^k$ and a 0 to $v_2^1,\ldots,v_2^k$. Using an argument similar to that described above Proposition, we obtain $\gamma_{ri2}(G_y) \geq \gamma_{ri2}(G) +2k$. Hence $\gamma_{ri2}(G_y) = \gamma_{ri2}(G) + 2k$.
\end{proof}
\section{2-rainbow independent domination stable graphs}
I this section we classify all 2-rainbow independent domination stable trees. First we present some classes of graphs which are not 2-rainbow independent domination stable.
\begin{proposition}\label{k14}
Let $G$ be a graph and $x\in V(G)$.
\begin{enumerate}
  \item If $G_x$ is a graph obtained from $G$ by adding three new vertices $v_1,v_2,v$ and joining $v$ to $x$ and $v_2$ to $x,v_1$, then $G_x$ is not a $\gamma_{ri2}$-stable graph.
  \item If $G_x$ is a graph obtained from $G$ by adding two paths  $P_2:=v_1v_2$ and $P_2:=v_1'v_2'$ and joining $x$ to $v_2,v_2'$, then $G_x$ is not a $\gamma_{ri2}$-stable graph.
  \item If $G_x$ is a graph obtained from $G$ by adding two paths $P_2:= v_1v_2$  and  $P_3:= u_1u_2u_3$, and joining $x$ to
$v_2$ and $u_3$,  then $G_x$ is not a $\gamma_{ri2}$-stable graph.
\item If $G_x$ is a graph obtained from $G$ by adding a path $P_4:= u_1u_2u_3u_4$ and joining $x$ to
$u_4$,  then $G_x$ is not a $\gamma_{ri2}$-stable graph.
\item If $G_x$ is a graph obtained from $G$ by adding two paths $P_3:= v_1v_2v_3$  and  $Q_3:= u_1u_2u_3$ and a new vertex $w$, and joining $x$ to $w,v_3$ and $u_3$,  then $G_x$ is not a $\gamma_{ri2}$-stable graph.
\item If $G_x$ is a graph obtained from $G$ by adding $k\ge 3$ pendant edges $xv_1,\ldots,xv_k$, then $G_x$ is not a $\gamma_{ri2}$-stable graph.
\item If $G_x$ is a graph obtained from $G$ by adding a path $P_5:=u_1u_2u_3u_4u_5$ and joining $x$ to $u_4$, then $G_x$ is not a $\gamma_{ri2}$-stable graph.
\end{enumerate}
\end{proposition}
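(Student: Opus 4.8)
The plan is to treat the seven items one at a time and, for each, to exhibit a single vertex $u$ of $G_x$ with $\gamma_{ri2}(G_x-u)\ne\gamma_{ri2}(G_x)$; the natural candidates are a leaf of the attached gadget, the attachment vertex $x$, or a degree-two vertex of the gadget, and which one works will depend on $G$, so each item splits into a short dichotomy. The principal tool is Corollary \ref{leaf2}: if $\ell$ is a leaf of $G_x$ whose neighbour $s$ has $f(s)\ne0$ for some $\gamma_{ri2}$-function $f$ of $G_x$, then $\gamma_{ri2}(G_x-\ell)<\gamma_{ri2}(G_x)$. I will also use Proposition \ref{-1} (the bounds $\gamma_{ri2}(H)-1\le\gamma_{ri2}(H-x)\le\gamma_{ri2}(H)+\deg(x)-1$) together with Corollary \ref{-=+}, Proposition \ref{pen3} (which supplies a $\gamma_{ri2}$-function of $G_x$ in which the middle vertex of each pendant $P_3$ gets $0$ and its two ends get $1$ and $2$), and additivity of $\gamma_{ri2}$ over connected components, which lets me compute $\gamma_{ri2}(G_x-u)$ exactly whenever deleting $u$ splits the gadget off as a disjoint union of paths (via Propositions \ref{path}, \ref{k12}, \ref{k13}, \ref{spider}).

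For the small gadgets (items 1, 2, 6) the engine is a forced-colour statement at $x$. In item 1 every $\gamma_{ri2}$-function $f$ of $G_x$ has $f(v_2)\ne0$ or $f(x)\ne0$, since $f(v_2)=0$ would force $v_2$'s only neighbours $x$ and $v_1$ to carry the two distinct colours, in particular $f(x)\ne0$. Hence either some $\gamma_{ri2}$-function has $f(v_2)\ne0$, and Corollary \ref{leaf2} at the leaf $v_1$ gives $\gamma_{ri2}(G_x-v_1)<\gamma_{ri2}(G_x)$, or every $\gamma_{ri2}$-function has $f(x)\ne0$, and Corollary \ref{leaf2} at the leaf $v$ gives $\gamma_{ri2}(G_x-v)<\gamma_{ri2}(G_x)$. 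Item 2 is the same in spirit: if some $\gamma_{ri2}$-function has $f(x)=0$ it forces $f(v_2)\ne0$ and we delete $v_1$; otherwise every $\gamma_{ri2}$-function has $f(x)\ne0$, and we delete $x$. Since $G_x-x$ is $G-x$ together with two copies of $P_2$, $\gamma_{ri2}(G_x-x)=\gamma_{ri2}(G-x)+4$, while a case split on whether $\gamma_{ri2}(G-x)=\gamma_{ri2}(G)-1$ and whether $x\in W_0(G)$ — using Proposition \ref{-1} and the observation that, under the present hypothesis, an extension assigning $0$ to $x$ cannot be optimal — yields $\gamma_{ri2}(G_x)\le\gamma_{ri2}(G-x)+3$, and hence the strict inequality. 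For item 6, in any fixed $\gamma_{ri2}$-function the $k\ge3$ leaves at $x$ have one colour class with at least two members; deleting a leaf from it leaves $x$ still dominated by both colours (or still non-zero), so $\gamma_{ri2}(G_x-v_1)\le\gamma_{ri2}(G_x)-1$.

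For the path gadgets (items 3, 4, 5, 7) I first apply Proposition \ref{pen3} to each pendant $P_3$, producing a $\gamma_{ri2}$-function of $G_x$ with the middle vertices $0$ and the ends coloured $1,2$. If additionally some $\gamma_{ri2}$-function of $G_x$ has the neighbour of a gadget leaf non-zero, Corollary \ref{leaf2} finishes the item. Otherwise every $\gamma_{ri2}$-function of $G_x$ has that middle vertex $0$, which, by the same neighbourhood argument as in item 1, forces gadget vertices nearer to $x$ to be non-zero in every $\gamma_{ri2}$-function; I then delete either the vertex that joins a $P_3$ to the rest of $G_x$ (so that a $P_2$ or a $P_3$ splits off and Propositions \ref{path}, \ref{k12} apply) or the vertex $x$ itself, and compare with a sharp upper bound for $\gamma_{ri2}(G_x)$ obtained by extending an optimal function of $G$, respectively of $G-x$, along the gadget. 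Once the right vertex is fixed the remaining verification is routine bookkeeping with the formulas of Propositions \ref{path}, \ref{k12}, \ref{k13}, \ref{spider}.

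The main obstacle is precisely the choice of the vertex $u$: $\gamma_{ri2}(G_x)$ genuinely depends on global features of $G$ — notably whether $x$, or an auxiliary vertex obtained after deleting part of the gadget, lies in $W_0$ of the relevant graph, and whether $\gamma_{ri2}(G-x)=\gamma_{ri2}(G)-1$ — so no uniform choice of $u$ can work and every item must branch. The delicate branch is always the one in which the relevant vertices are forced to value $0$, so that Corollary \ref{leaf2} is unavailable; there one deletes $x$ and must bound $\gamma_{ri2}(G_x)$ tightly from above, the crucial point being that under the branch hypothesis an extension assigning $0$ to $x$ is provably not a $\gamma_{ri2}$-function, which pushes $\gamma_{ri2}(G_x)$ one below the naive bound and produces the required strict inequality against $\gamma_{ri2}(G_x-x)$.
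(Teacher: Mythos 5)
Your arguments for items 1, 2 and 6 are sound. Item 1 coincides with the paper's proof; for items 2 and 6 you take a slightly different but valid route (the paper instead assumes stability, takes an optimal function on $G_x-x$, and transplants it to $G_x-v_1$, resp.\ $G_x-v_k$, with smaller weight). In particular your observation in item 2 that an optimal extension of a $\gamma_{ri2}$-function of $G-x$ assigning $0$ to $x$ would contradict the branch hypothesis, forcing $\gamma_{ri2}(G_x)\le\gamma_{ri2}(G-x)+3<\gamma_{ri2}(G_x-x)$, is correct and works.

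The unified recipe for the path-gadget items 3, 4, 5, 7 has a genuine gap: in the branch where Corollary~\ref{leaf2} is unavailable, your only candidate vertices are the vertex joining the pendant $P_3$ to the rest of $G_x$ and the vertex $x$, and neither need work. Concretely, take item 4 with $G=K_1$, so $G_x=P_5$ with $\gamma_{ri2}(P_5)=3$; every $\gamma_{ri2}$-function of $P_5$ has $f(u_2)=0$, so you are in the delicate branch, yet $\gamma_{ri2}(P_5-u_4)=\gamma_{ri2}(P_3)+1=3$, $\gamma_{ri2}(P_5-x)=\gamma_{ri2}(P_4)=3$ and $\gamma_{ri2}(P_5-u_1)=3$: the \emph{only} certificate vertex is the interior vertex $u_3$, with $\gamma_{ri2}(P_5-u_3)=\gamma_{ri2}(2P_2)=4$, and your plan never considers it. The same failure is structural in item 7: by Propositions~\ref{path} and~\ref{k13} one always has $\gamma_{ri2}(G_x-u_4)=\gamma_{ri2}(G)+\gamma_{ri2}(P_3)+1=\gamma_{ri2}(G)+3=\gamma_{ri2}(G_x)$, so deleting the joining vertex can never work, and deleting $x$ fails whenever $\gamma_{ri2}(G-x)=\gamma_{ri2}(G)$ (e.g.\ $G=P_3$ attached at its centre, where the forced-zero branch does occur). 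Item 5 with $G=K_1$ exhibits the same phenomenon: only the deletion of $w$ changes the value, and since every optimal function there has $f(x)=0$, Corollary~\ref{leaf2} cannot reach $w$ either. What is missing is the paper's actual mechanism for these items: assume $\gamma_{ri2}(G_x-u_3)=\gamma_{ri2}(G_x)$ (resp.\ $\gamma_{ri2}(G_x-x)=\gamma_{ri2}(G_x)$ in item 5), take an optimal function on that vertex-deleted graph, and explicitly recolour it into a 2RiDF of $G_x$ minus a suitable \emph{leaf} ($u_1$, $u_5$ or $w$) of strictly smaller weight. That recolouring step, not any bookkeeping with the decomposition formulas, is the heart of items 3--5 and 7, and your sketch does not contain it.
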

\begin{proof}
\begin{enumerate}
  \item Let $f$ be a $\gamma_{ri2}(G_x)$-function. By Observation  \ref{obv}, $f(v_1)\neq 0$ and $f(v)\neq 0$. To dominate $v_2$, we must have $f(v_2)\neq 0$ or $f(x)\neq 0$. It follows from Corollary \ref{leaf2} that $\gamma_{ri2}(G_x-v_1)<\gamma_{ri2}(G_x)$ or $\gamma_{ri2}(G_x-v)<\gamma_{ri2}(G_x)$. Hence $G_x$ is not a $\gamma_{ri2}$-stable graph.
  \item Suppose, to the contrary, that $G_x$ is a $\gamma_{ri2}$-stable graph. Then we must have $\gamma_{ri2}(G_x) = \gamma_{ri2}(G_x -x)$. Let $f$ be any $\gamma_{ri2}$-function on $G-x$. By Observation \ref{obv}, we have $0\not\in \{f(v_1),f(v_2),f(v_1'),f(v_2')\}$. We may assume that $f(v_1)=f(v_2')=1$ and $f(v_2)=f(v_1')=2$. Then the function $f$ restricted to $G_x-v_1$ is a 2RiDF on $G_x-v_1$ of weight less than $\gamma_{ri2}(G_x)$ which is a contradiction. Thus $G_x$ is not a $\gamma_{ri2}$-stable graph.
  \item Let $f$ be a $\gamma_{ri2}(G_x)$-function. By Observation  \ref{obv}, $f(u_1)\neq 0$ and $f(v_1)\neq 0$. If $f(u_2)\neq 0$ or $f(v_2)\neq 0$, then by Corollary \ref{leaf2}, $G_x$ is not a $\gamma_{ri2}$-stable graph. Let $f(v_2)=f(u_2)=0$. To dominate $v_2$, we must have $f(x)\neq 0$, say $f(x)=1$. Then the function $h:V(G_x-u_1)\to \{0,1,2\}$ defined by $h(u_2)=2$, $h(u_3)=0$ and $h(u)=f(u)$ otherwise, is a 2RiDF on $G_x-v_1$ of weight less than $\gamma_{ri2}(G_x)$ which is a contradiction. Thus $G_x$ is not a $\gamma_{ri2}$-stable graph.
      \item Suppose, to the contrary, that $G_x$ is a $\gamma_{ri2}$-stable graph. Then we must have $\gamma_{ri2}(G_x) = \gamma_{ri2}(G_x -u_3)$. Let $f$ be any $\gamma_{ri2}$-function on $G_x-u_3$. By Observation \ref{obv}, we have $0\not\in \{f(u_1),f(u_2),f(u_4)\}$. We may assume that $f(u_4)=f(u_1)=1$ and $f(u_2)=2$. Then the function $g:V(G_x-u_1)\to \{0,1,2\}$ defined by $g(u_3)=0$ and $g(u)=f(u)$ otherwise, is a 2RiDF on $G_x-u_1$ of weight less than $\gamma_{ri2}(G_x)$ which is a contradiction. Thus $G_x$ is not a $\gamma_{ri2}$-stable graph.
      \item Suppose, to the contrary, that $G_x$ is a $\gamma_{ri2}$-stable graph. Then we must have $\gamma_{ri2}(G_x) = \gamma_{ri2}(G_x -x)$. Let $f$ be any $\gamma_{ri2}$-function on $G_x-x$. We may assume without loss of generality that $f(w)=f(v_1)=f(u_3)=1$, $f(u_1)=f(v_3)=2$ and $f(u_2)=f(v_2)=0$. Then the function $g:V(G_x-w)\to \{0,1,2\}$ defined by $g(x)=0$ and $g(u)=f(u)$ otherwise, is a 2RiDF on $G_x-w$ of weight less than $\gamma_{ri2}(G_x)$ which is a contradiction. Thus $G_x$ is not a $\gamma_{ri2}$-stable graph.
      \item Suppose, to the contrary, that $G_x$ is a $\gamma_{ri2}$-stable graph. Then we must have $\gamma_{ri2}(G_x) = \gamma_{ri2}(G_x -x)$. Let $f$ be any $\gamma_{ri2}$-function on $G_x-x$. We may assume without loss of generality that $f(v_1)=1$ and $f(v_2)=\cdots=f(v_k)=2$. Then the function $g:V(G_x-v_k)\to \{0,1,2\}$ defined by $g(x)=0$ and $g(u)=f(u)$ otherwise, is a 2RiDF on $G_x-v_k$ of weight less than $\gamma_{ri2}(G_x)$ which is a contradiction. Thus $G_x$ is not a $\gamma_{ri2}$-stable graph.
      \item Suppose, to the contrary, that $G_x$ is a $\gamma_{ri2}$-stable graph. Then we must have $\gamma_{ri2}(G_x) = \gamma_{ri2}(G_x -u_3)$. Let $f$ be any $\gamma_{ri2}$-function on $G_x-u_3$. Then $0\not\in \{f(u_5),f(u_1),f(u_2)\}$. We may assume without loss of generality that $f(u_5)=1$. If $f(u_4)\neq 0$, then the function $g:V(G_x-u_1)\to \{0,1,2\}$ defined by $g(u_3)=0, g(u_2)=1$ and $g(u)=f(u)$ otherwise, is a 2RiDF on $G_x-u_1$ of weight less than $\gamma_{ri2}(G_x)-1$ which is a contradiction. If $f(u_4)=0$, then we must have $f(x)=2$ and the function $g:V(G_x-u_5)\to \{0,1,2\}$ defined by $g(u_3)=1, g(u_1)=2, g(u_2)=g(u_4)=0$ and $g(u)=f(u)$ otherwise, is a 2RiDF on $G_x-u_5$ of weight less than $\gamma_{ri2}(G_x)$ which is a contradiction.
          Thus $G_x$ is not a $\gamma_{ri2}$-stable graph.
\end{enumerate}
\end{proof}

\subsection{Trees}
In this subsection we give a constructive characterization of all 2-rainbow independent domination stable trees.

In order to presenting our constructive characterization, we define a family of trees as follows.
Let $\mathcal{T}$ be the family of trees $T$ that can be obtained from a sequence $T_1,T_2,\ldots,T_r$
of trees for some $r \geq 1$, where $T_1\in \{P_3, S^{v_1}_k\mid k \geq 3\}$,  and $T=T_r$.
If $r \geq 2$, $T_{i+1}$ can be obtained from $T_i$ by one of the following three operations.

\begin{description}
\item [Operation ${\mathcal O}_1$: ] If $x \in W_0(T_i)$, then ${\mathcal O}_1$ adds a path $v_2v_1v_3$  and joins $x$ to $v_1$ to
obtain $T_{i+1}$ (see Fig. 1(a) ).
\item [Operation ${\mathcal O}_2$: ] If $x \notin W_0(T_i)$, then ${\mathcal O}_2$ adds a path  $v_4v_3v_2v_1v_5v_6v_7$ and joins $x$ to $v_1$ to obtain $T_{i+1}$ (see Fig. 1(b) ).
\item [Operation ${\mathcal O}_3$: ] If $x \in V(T_i)$, then ${\mathcal O}_3$ adds
  an spider $S_k^{v_1}\;(k \geq 3)$ and joins $x$ to $v_1$ to obtain $T_{i+1}$ (see Fig. 1(c) ).
\end{description}

\begin{figure}[!ht]
\begin{tikzpicture}[scale=0.95,line cap=round,line join=round,>=triangle 45,x=1.0cm,y=1.0cm]
\clip(-0.5,-0.02) rectangle (15.14,4.74);
\draw (0.34,3)-- (1.68,3);
\draw (1.68,3)-- (2.14,3.46);
\draw (1.68,3)-- (2.24,2.74);
\draw(0.66,2.98) circle (0.7cm);
\draw (3.84,3)-- (8,3);
\draw (5.14,3)-- (6.02,3.7);
\draw (6.02,3.7)-- (8,3.7);
\draw(4.1,3) circle (0.7cm);
\draw (9.56,3)-- (14,3);
\draw (11,3)-- (12,3.7);
\draw (12,3.7)-- (14,3.7);
\draw (11,3)-- (12,1.8);
\draw (12,1.8)-- (14,1.8);
\draw(10,3) circle (0.7cm);
\fill [color=black] (0.66,2.98) circle (1.5pt);
\draw[color=black] (0.66,3.2) node {{\tiny $x\in W_0(T_i)$}};
\fill [color=black] (1.68,3) circle (1.5pt);
\draw[color=black] (1.66,2.68) node {$v_1$};
\fill [color=black] (2.14,3.46) circle (1.5pt);
\draw[color=black] (2.18,3.82) node {$v_2$};
\fill [color=black] (2.24,2.74) circle (1.5pt);
\draw[color=black] (2.3,2.42) node {$v_3$};
\draw[color=black] (1.48,1.5) node {$(a)$};
\fill [color=black] (4.1,3) circle (1.5pt);
\draw[color=black] (4.12,3.2) node {\tiny {$x\not\in W_0(T_i)$}};
\fill [color=black] (5.14,3) circle (1.5pt);
\draw[color=black] (5.14,2.7) node {$v_1$};
\fill [color=black] (6,3) circle (1.5pt);
\draw[color=black] (6.04,2.7) node {$v_5$};
\fill [color=black] (7,3) circle (1.5pt);
\draw[color=black] (7,2.72) node {$v_6$};
\fill [color=black] (8,3) circle (1.5pt);
\draw[color=black] (8,2.7) node {$v_7$};
\fill [color=black] (6.02,3.7) circle (1.5pt);
\draw[color=black] (6,4.06) node {$v_2$};
\fill [color=black] (7,3.7) circle (1.5pt);
\draw[color=black] (7,4.06) node {$v_3$};
\fill [color=black] (8,3.7) circle (1.5pt);
\draw[color=black] (7.98,4.08) node {$v_4$};
\draw[color=black] (5.96,1.5) node {$(b)$};
\fill [color=black] (10,3) circle (1.5pt);
\draw[color=black] (9.98,3.32) node {$x$};
\fill [color=black] (11,3) circle (1.5pt);
\draw[color=black] (10.96,3.34) node {$v_1$};
\fill [color=black] (12,3) circle (1.5pt);
\draw[color=black] (11.98,3.3) node {$v_1^2$};
\fill [color=black] (13,3) circle (1.5pt);
\draw[color=black] (12.96,3.36) node {$v_2^2$};
\fill [color=black] (14,3) circle (1.5pt);
\draw[color=black] (14,3.34) node {$v_3^2$};
\fill [color=black] (12,3.7) circle (1.5pt);
\draw[color=black] (11.98,4.08) node {$v_1^1$};
\fill [color=black] (13,3.7) circle (1.5pt);
\draw[color=black] (12.98,4.08) node {$v_2^1$};
\fill [color=black] (14,3.7) circle (1.5pt);
\draw[color=black] (14.08,4.08) node {$v_3^1$};
\fill [color=black] (12,2.62) circle (0.5pt);
\fill [color=black] (12,2.38) circle (0.5pt);
\fill [color=black] (12,2.12) circle (0.5pt);
\fill [color=black] (12,1.8) circle (1.5pt);
\draw[color=black] (12.08,1.46) node {$v_1^k$};
\fill [color=black] (13,1.8) circle (1.5pt);
\draw[color=black] (13.1,1.48) node {$v_2^k$};
\fill [color=black] (14,1.8) circle (1.5pt);
\draw[color=black] (14.12,1.46) node {$v_3^k$};
\draw[color=black] (10.5,1.5) node {$(c)$};
\end{tikzpicture}
\caption{$(a),(b),(c)$ are Operations $\mathcal{O}_1$, $\mathcal{O}_2$ and $\mathcal{O}_3$, respectively.}
\end{figure}
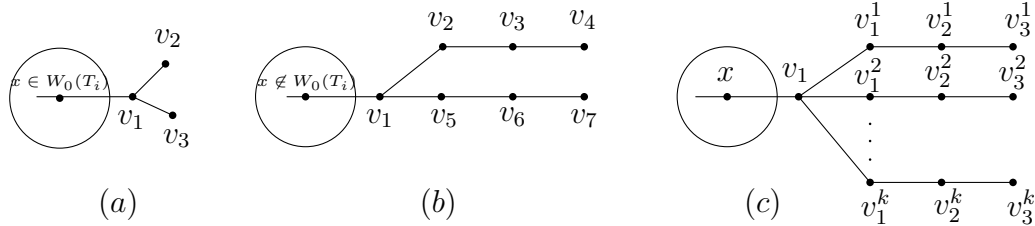

Our main result in this section is the following:
\begin{theorem}\label{main}
Let T be a tree of order $n\geq 3$. Then $T$ is a $\gamma_{ri2}$-stable tree if
and only if $T\in \mathcal{T}$.
\end{theorem}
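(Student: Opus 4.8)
The plan is to prove the two implications separately: sufficiency ($T\in\mathcal T\Rightarrow T$ is $\gamma_{ri2}$-stable) by induction on the length $r$ of the generating sequence, and necessity ($T$ a stable tree of order $n\ge 3\Rightarrow T\in\mathcal T$) by induction on $n$, the latter driven by the forbidden configurations collected in Proposition~\ref{k14}.

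\emph{Sufficiency.} For $r=1$ one only needs that $P_3$ and each $S_k^{v_1}$ with $k\ge 3$ are $\gamma_{ri2}$-stable: the first is a one-line check from Proposition~\ref{path} and Observation~\ref{obv}, and the second is exactly Observation~\ref{esp}. For the inductive step, assume $T_i\in\mathcal T$ is stable and $T_{i+1}$ is obtained from $T_i$ by $\mathcal O_j$ at a vertex $x$ meeting the stated condition on $W_0(T_i)$. I would first record that $\gamma_{ri2}(T_{i+1})=\gamma_{ri2}(T_i)+c$ with $c=2,4,2k$ for $j=1,2,3$, directly from Propositions~\ref{k12} and~\ref{spider}. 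Then I would check $\gamma_{ri2}(T_{i+1}-v)=\gamma_{ri2}(T_{i+1})$ for every $v$, splitting into two cases. If $v\in V(T_i)$, then $T_{i+1}-v$ is $T_i-v$ with the same gadget attached at $x$ (or, if $v=x$, the disjoint union of $T_i-x$ with the gadget's body, which is $P_3$, $S_2^{v_1}$, or $S_k^{v_1}$ of $\gamma_{ri2}$-weight $c$ by Definition~\ref{exsp}), so Propositions~\ref{k12}/\ref{spider} give $\gamma_{ri2}(T_{i+1}-v)=\gamma_{ri2}(T_i-v)+c=\gamma_{ri2}(T_i)+c=\gamma_{ri2}(T_{i+1})$ by stability of $T_i$. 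If $v$ lies in the gadget, the gadget is small and explicit, so using Observation~\ref{obv} and Proposition~\ref{pen3} one identifies $T_{i+1}-v$ (a copy of $T_i$ with a trimmed tail, or $T_i$ plus a short disjoint path, or a spider with one foot shortened) and evaluates its $\gamma_{ri2}$ by hand; this is where the hypothesis ``$x\in W_0(T_i)$'' (for $\mathcal O_1$) or ``$x\notin W_0(T_i)$'' (for $\mathcal O_2,\mathcal O_3$) enters, being precisely what prevents the value from dropping by one below $\gamma_{ri2}(T_{i+1})$.

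\emph{Necessity.} Let $T$ be stable of order $n\ge 3$. If $T\in\{P_3\}\cup\{S_k^{v_1}:k\ge 3\}$ we are done; in particular Corollary~\ref{n-1} together with Proposition~\ref{k14}(6) disposes of stars and double stars and covers small $n$, giving the base of the induction. Otherwise fix a path $a_0a_1\dots a_k$ of maximum length in $T$, root $T$ at $a_k$, and examine the maximal subtree $T_{a_2}$ and, when $\deg(a_2)=2$, also $T_{a_3}$; every child of $a_1$ is a leaf and these subtrees have bounded depth. Invoking the seven forbidden configurations of Proposition~\ref{k14} together with Observation~\ref{obv}, Corollary~\ref{leaf2} and Proposition~\ref{pen3}, I would argue that the only configurations compatible with stability force a pendant copy of the gadget of $\mathcal O_1$, $\mathcal O_2$ or $\mathcal O_3$ attached at some vertex $x$, and simultaneously pin down whether $x\in W_0$. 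Deleting this gadget yields a tree $T'$ with $\gamma_{ri2}(T)=\gamma_{ri2}(T')+c$ (Propositions~\ref{k12}/\ref{spider}); for each $v\in V(T')$ the same propositions applied to $T'-v$ give $\gamma_{ri2}(T-v)=\gamma_{ri2}(T'-v)+c$, so stability of $T$ forces $\gamma_{ri2}(T'-v)=\gamma_{ri2}(T')$, i.e.\ $T'$ is $\gamma_{ri2}$-stable. One checks that $|V(T')|\ge 3$ (a smaller $T'$ would make the additive lemma contradict stability of $T$) and that the $W_0$-condition demanded by the operation does hold in $T'$; then the induction hypothesis gives $T'\in\mathcal T$, and one further application of the operation yields $T\in\mathcal T$.

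\emph{Main obstacle.} The crux is the structural dissection in the necessity step: showing that at the deep end of a longest path the three gadgets of $\mathcal O_1$--$\mathcal O_3$, with the correct $W_0$-status of the anchor, are the only possibilities left by stability. This requires a careful case analysis on the degrees of $a_1,a_2,a_3$ and on the subtrees rooted at the children of $a_2$ and $a_3$, repeatedly using Proposition~\ref{k14}, Corollary~\ref{leaf2}, Observation~\ref{obv} and Proposition~\ref{pen3}; a subsidiary nuisance is the bookkeeping needed to transfer the $W_0$-condition from $T$ to $T'$ so that the operation's precondition is literally met. Everything else is routine given the additive Propositions~\ref{k12},~\ref{k13},~\ref{spider} and the stability hypothesis.
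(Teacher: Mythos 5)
Your overall architecture coincides with the paper's: sufficiency by induction on the number of operations (the paper's Lemmas~\ref{l1}--\ref{l3} feeding Theorem~\ref{mainsuf}), necessity by induction on the order along a diametrical path (Theorem~\ref{mainnes}). So there is no methodological divergence to report; the problem is that in the necessity direction your proposal stops exactly where the proof begins. The assertion that ``the only configurations compatible with stability force a pendant copy of the gadget of $\mathcal O_1$, $\mathcal O_2$ or $\mathcal O_3$'' \emph{is} the content of the theorem in that direction, and it is not obtained by merely ``invoking'' Proposition~\ref{k14}. The paper must (a) bound $\deg(v_2)\le 3$ and split on $\deg(v_2)\in\{2,3\}$; (b) when $\deg(v_2)=2$, force $\deg(v_3)=2$ and $\deg(v_4)\ge 3$ and then run subcases on the children of $v_4$, some of which (a leaf child of $v_4$, a strong support $v_4$, the extra path $v_4z_3z_2z_1$) do not yield a gadget at all but are eliminated by explicit weight-saving constructions that go beyond the seven configurations of Proposition~\ref{k14}; and (c) only in the surviving configurations recognize the gadget. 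None of this case analysis appears in your text, and it is where essentially all the difficulty of the theorem lives.

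Second, what you dismiss as ``bookkeeping'' --- transferring the $W_0$-precondition to $T'$ --- is a genuine argument. For $\mathcal O_1$ one must \emph{prove} $v_3\in W_0(T')$ (the paper does so by showing that a $\gamma_{ri2}(T')$-function positive at $v_3$ would let one save a unit on $T-w$, contradicting stability), and for $\mathcal O_2$ one must prove $v_5\notin W_0(T')$, which requires manufacturing a $\gamma_{ri2}(T')$-function positive at $v_5$ from a $\gamma_{ri2}(T-z_3)$-function via a further case split on $f(v_4)$ and $f(v_5)$. Likewise, in the sufficiency direction the lower bounds $\gamma_{ri2}(T_{i+1}-v)\ge\gamma_{ri2}(T_{i+1})$ for $v$ inside the attached gadget are precisely where the hypotheses $x\in W_0(T_i)$ (for $\mathcal O_1$) and $x\notin W_0(T_i)$ (for $\mathcal O_2$) are consumed; you correctly locate this but do not carry it out. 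In short: the plan is the paper's plan, but the decisive arguments are declared rather than performed, so as it stands the proposal does not constitute a proof of the necessity half.
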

We proceed with some lemmas.

\begin{lemma}\label{l1}
{\em If $T_i$ is  a $\gamma_{ri2}$-stable tree and a tree $T_{i+1}$ is  obtained from
$T_i$ by Operation ${\mathcal O}_1$, then
$T_{i+1}$ is a $\gamma_{ri2}$-stable tree.}
\end{lemma}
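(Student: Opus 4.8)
The plan is to compute $\gamma_{ri2}(T_{i+1})$ once and then verify, vertex by vertex, that deleting any vertex leaves this value unchanged. Write the attached path as $v_2v_1v_3$, so $T_{i+1}$ is $T_i$ together with this path and the edge $xv_1$, where $x\in W_0(T_i)$. Since $T_i$ is a tree of order $\ge 3$ and hence has no isolated vertices, Proposition~\ref{k12} with $G=T_i$, $y=x$ gives at once $\gamma_{ri2}(T_{i+1})=\gamma_{ri2}(T_i)+2$. It then suffices to show $\gamma_{ri2}(T_{i+1}-u)=\gamma_{ri2}(T_i)+2$ for every $u\in V(T_{i+1})=V(T_i)\cup\{v_1,v_2,v_3\}$.

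The cases $u\in V(T_i)\cup\{v_1\}$ are routine. If $u=v_1$, then $T_{i+1}-v_1$ is $T_i$ together with two isolated vertices, so its $\gamma_{ri2}$ is $\gamma_{ri2}(T_i)+2$. If $u=x$, then $T_{i+1}-x$ is the disjoint union of $T_i-x$ with the path $v_2v_1v_3=P_3$, so by Proposition~\ref{path} its value is $\gamma_{ri2}(T_i-x)+2$, which equals $\gamma_{ri2}(T_i)+2$ because $T_i$ is $\gamma_{ri2}$-stable. If $u\in V(T_i)\setminus\{x\}$, then $T_{i+1}-u$ arises from $T_i-u$ by attaching the path $v_2v_1v_3$ at $x$ via the edge $xv_1$; since $x\in W_0(T_i)$ forces $\deg_{T_i}(x)\ge 2$, the vertex $x$ is not isolated in $T_i-u$, and any isolated vertices of $T_i-u$ are left untouched, so applying Proposition~\ref{k12} to the component of $T_i-u$ containing $x$ (and using additivity of $\gamma_{ri2}$ over components) yields $\gamma_{ri2}(T_{i+1}-u)=\gamma_{ri2}(T_i-u)+2=\gamma_{ri2}(T_i)+2$ by stability of $T_i$.

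The only delicate case is $u\in\{v_2,v_3\}$; by symmetry take $u=v_3$. The bound $\gamma_{ri2}(T_{i+1}-v_3)\le\gamma_{ri2}(T_i)+2$ is immediate, by extending any $\gamma_{ri2}$-function of $T_i$ with $v_1\mapsto 1$, $v_2\mapsto 2$. For the matching lower bound, Proposition~\ref{-1} gives $\gamma_{ri2}(T_{i+1}-v_3)\ge\gamma_{ri2}(T_{i+1})-1$, so suppose for contradiction $\gamma_{ri2}(T_{i+1}-v_3)=\gamma_{ri2}(T_i)+1$ and let $g$ be a corresponding $\gamma_{ri2}$-function; note $g(v_2)\ne 0$ by Observation~\ref{obv}. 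The idea is to push $g$ down to a 2RiDF of $T_i$ that is either of weight $<\gamma_{ri2}(T_i)$ — an outright contradiction — or a $\gamma_{ri2}$-function nonzero at $x$, contradicting $x\in W_0(T_i)$. If $g(v_1)=0$, domination of $v_1$ forces $\{g(x),g(v_2)\}=\{1,2\}$, so restricting $g$ to $V(T_i)$ is a 2RiDF of $T_i$ of weight $\gamma_{ri2}(T_i)$ with $g(x)\ne 0$. If $g(v_1)\ne 0$, say $g(v_1)=1$, then $g(v_2)=2$ by independence of $V_1$, and the restriction of $g$ to $V(T_i)$ has weight $\gamma_{ri2}(T_i)-1$; it is already a 2RiDF of $T_i$ unless $g(x)=0$ and $x$ has no neighbour of value $1$ in $T_i$ (the only vertex whose neighbourhood differs between $T_i$ and $T_{i+1}-v_3$ is $x$, which gained the neighbour $v_1$), in which case reassigning $x\mapsto 1$ restores domination without harming independence and produces a $\gamma_{ri2}$-function of $T_i$ with $x$ nonzero. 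Every branch contradicts either minimality of $\gamma_{ri2}(T_i)$ or $x\in W_0(T_i)$.

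I expect the main obstacle to be precisely this last case: passing a $\gamma_{ri2}$-function of $T_{i+1}-v_3$ back to $T_i$ requires careful bookkeeping of the weight and of the single place — the edge $xv_1$ — where neighbourhoods change, and one must pin down exactly when the restricted function fails to dominate $x$ and how repairing it interacts with the hypothesis $x\in W_0(T_i)$. Everything else reduces cleanly to Proposition~\ref{k12}, Proposition~\ref{path}, and the stability of $T_i$, with only the minor caveat of isolated vertices possibly created by deleting a vertex of $T_i$.
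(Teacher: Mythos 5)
Your proof is correct and follows essentially the same route as the paper: reduce everything to Proposition~\ref{k12} and stability except for the deletion of a pendant vertex $v_2$ or $v_3$, and there split on the value at $v_1$, restrict to $T_i$, and use $x\in W_0(T_i)$ (repairing the function at $x$ when the lost edge $xv_1$ breaks domination) to rule out a drop in weight. The only differences are cosmetic — you phrase the hard case as a contradiction via Proposition~\ref{-1} rather than a direct lower bound, and you are slightly more careful than the paper about isolated vertices in the routine cases.
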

\begin{proof}
By Proposition \ref{k12}, $\gamma_{ri2}(T_{i+1}) = \gamma_{ri2}(T_{i}) +2$.
For any vertex $u$ of $T_{i+1}$, we must show that $\gamma_{ri2}(T_{i+1}-u) = \gamma_{ri2}(T_{i+1})$.
If $u \in V(T_i)-\{x\}$, then by Proposition \ref{k12} we have
$\gamma_{ri2}(T_{i+1}-u)  = \gamma_{ri2}(T_{i} - u) + 2 =
   \gamma_{ri2}(T_i) + 2= \gamma_{ri2}(T_{i+1})$. If $u = x$, then clearly
   $\gamma_{ri2}(T_{i+1}-x)  = \gamma_{ri2}(T_{i} - x) + 2 =
   \gamma_{ri2}(T_i) + 2= \gamma_{ri2}(T_{i+1})$.
	If 	$u = v_1$, then $T_{i+1}$ is the union of $T_i$ and two isolated vertices and so $\gamma_{ri2}(T_{i+1}-v_1)  = \gamma_{ri2}(T_{i}) + 2 = \gamma_{ri2}(T_{i+1})$.
	
Assume now that $u=v_2$ (the case $u=v_3$ is similar). Clearly, any $\gamma_{ri2}$-function on $T_i$ can be extended by assigning a 1 to $v_1$ and a 2 to $v_3$ yielding $\gamma_{ri2}(T_{i+1}-v_2)\le \gamma_{ri2}(T_i) + 2$. Now let $f$ be a $\gamma_{ri2}(T_{i+1}-v_2)$-function. By Observation \ref{obv}, we may assume without loss of generality that $f(v_3)=1$. If $f(v_1)=0$, then we must have $f(x)=2$ and the function $f$ restricted to $T_i$ is a 2RiDF  on $T_i$ and we conclude from $x\in W_0({T_i})$ that $\gamma_{ri2}(T_{i+1}-v_2)=\omega(f)=\omega(f|_{T_i})+1\ge \gamma_{ri2}(T_i) + 2$. Assume that $f(v_1)=2$. Then we must have $f(x)=0$. If $2\in \{f(u)\mid u\in N_{T_i}(x)\}$, then the function $f$ restricted to $T_i$ is a 2RiDF  on $T_i$ and we have
$\gamma_{ri2}(T_{i+1}-v_2)=\omega(f)\ge \gamma_{ri2}(T_i) + 2$. Suppose $2\not\in \{f(u)\mid u\in N_{T_i}(x)\}$. Then the function $h:V(T_i)\to \{0,1,2\}$ defined by $h(x)=2$ and $h(u)=f(u)$ otherwise, is a 2RiDF  on $T_i$ and it follows from $x\in W_0({T_i})$ that $\gamma_{ri2}(T_{i+1}-v_2)=\omega(f)=\omega(h)+1\ge \gamma_{ri2}(T_i) + 2$. Thus $\gamma_{ri2}(T_{i+1}-v_2)\ge \gamma_{ri2}(T_i) + 2$. Hence $\gamma_{ri2}(T_{i+1}-v_2)=\gamma_{ri2}(T_i) + 2=\gamma_{ri2}(T_{i+1})$. Therefore $T_{i+1}$ is a $\gamma_{ri2}$-stable tree.
\end{proof}
\begin{lemma}\label{l2}
{\em If $T_i$ is  a $\gamma_{ri2}$-stable tree and a tree  $T_{i+1}$ is obtained from
$T_i$ by Operation ${\mathcal O}_2$, then
$T_{i+1}$ is a $\gamma_{ri2}$-stable tree.}
\end{lemma}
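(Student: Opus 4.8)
The plan is to mirror the structure of the proof of Lemma~\ref{l1}. First I would record the weight identity: by Proposition~\ref{k13} applied twice (the added path $v_4v_3v_2v_1v_5v_6v_7$ is a $P_5$ hung at $v_1$ together with a $P_2$, or more directly one treats $v_4v_3v_2v_1$ as a $P_4$-pendant and $v_1v_5v_6v_7$ as another $P_3$-pendant), one gets $\gamma_{ri2}(T_{i+1}) = \gamma_{ri2}(T_i) + 5$. Cleaner: the seven new vertices split as the path $v_4v_3v_2v_1$ attached at $x$ via $v_1$ (contributing $+3$ by Proposition~\ref{k13} if one views $v_4v_3v_2v_1\cup$ extension... ) --- I would instead just verify directly that an optimal assignment on the seven new vertices uses weight $5$ with $f(v_1)=0$, and that no 2RiDF can do better, exactly as in Propositions~\ref{k12}--\ref{spider}. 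So Step~1 establishes $\gamma_{ri2}(T_{i+1})=\gamma_{ri2}(T_i)+5$ and, crucially, that there is a $\gamma_{ri2}(T_{i+1})$-function with $f(v_1)\neq 0$ forced (since $x\notin W_0(T_i)$ the vertex $x$ must receive a nonzero value in \emph{some} optimal function on $T_i$, and I must track how this interacts with $v_1$).

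Step~2 is the case analysis over $u\in V(T_{i+1})$ for the stability claim $\gamma_{ri2}(T_{i+1}-u)=\gamma_{ri2}(T_{i+1})$. For $u\in V(T_i)$ I would again use the weight identity: $T_{i+1}-u$ is obtained from $T_i-u$ by the same Operation~$\mathcal{O}_2$ construction (when $u\neq x$), so $\gamma_{ri2}(T_{i+1}-u)=\gamma_{ri2}(T_i-u)+5=\gamma_{ri2}(T_i)+5$ by stability of $T_i$; the case $u=x$ detaches the pendant structure, giving a disjoint union of $T_i-x$ with a $P_4$ and a $P_3$, whose weights add to give the same total (using Proposition~\ref{path}). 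For $u$ among $v_2,\dots,v_7$ and $u=v_1$, I would exhibit explicit 2RiDFs of $T_{i+1}-u$ of weight $\gamma_{ri2}(T_{i+1})$ to get ``$\le$'', then prove ``$\ge$'' by taking a $\gamma_{ri2}(T_{i+1}-u)$-function $f$, applying Observation~\ref{obv} to the leaves among $v_4,v_7$, applying Proposition~\ref{pen3} to the pendant $P_3$ $v_7v_6v_5$ to normalize $f(v_6)=0$, and reducing $f$ to a 2RiDF of $T_i$. The delicate sub-case, as in Lemma~\ref{l1}, is when the reduced function forces $f(x)=0$ while $x$ needs its missing color from $v_1$: here I would modify $f$ on $T_i$ to put the missing value on $x$, and then invoke $x\notin W_0(T_i)$ to conclude that this modified function, whose restriction already has weight $\ge\gamma_{ri2}(T_i)$, actually must lose weight elsewhere, yielding the needed inequality $\omega(f)\ge\gamma_{ri2}(T_i)+5$.

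The main obstacle I anticipate is the interaction between the hypothesis $x\notin W_0(T_i)$ and the long pendant path. Unlike Operation~$\mathcal{O}_1$ where $x\in W_0(T_i)$ gave a clean ``every optimal function zeroes $x$'' statement, here I only know that \emph{some} optimal function on $T_i$ assigns $x$ a nonzero value, so I must be careful which $\gamma_{ri2}(T_i)$-function I extend in the ``$\le$'' direction (I need one with $x$ nonzero so that $v_1$ can be zeroed and the pendant path dominated with weight $5$), and in the ``$\ge$'' direction I must handle the case where the reduction produces a function with $f(x)=0$. The resolution is the standard swap: replace the value of whichever neighbor of $x$ supplies a color by moving it onto $x$, or directly set $f(x)$ to the color it lacks using that $v_1\in N(x)$ carried that color; the key point is that because $x$ can be nonzero optimally, such rearrangements do not cost extra on $T_i$, so the bound $\gamma_{ri2}(T_{i+1}-u)\ge\gamma_{ri2}(T_i)+5$ survives. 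I would also double-check the base weight $+5$ against Proposition~\ref{path} by noting $\gamma_{ri2}(P_7)=\lceil 8/2\rceil=4$ and accounting for the extra edge to $x$, to make sure the construction's arithmetic in Fig.~1(b) is consistent.
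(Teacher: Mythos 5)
Your overall strategy (mirror Lemma~\ref{l1}: establish a weight identity, then run a case analysis over the deleted vertex using Observation~\ref{obv} and Proposition~\ref{pen3}, with special care for the interaction with $x\notin W_0(T_i)$) is the right one and matches the paper. But your central weight identity is wrong, and the error comes from misreading the attached structure. The path $v_4v_3v_2v_1v_5v_6v_7$ is a $P_7$ whose \emph{middle} vertex $v_1$ is joined to $x$; equivalently, the added piece is the spider $S_2^{v_1}$ (two pendant $P_3$'s, $v_2v_3v_4$ and $v_5v_6v_7$, hanging off $v_1$). Your proposed decomposition into ``a $P_4$-pendant $v_4v_3v_2v_1$ and a $P_3$-pendant $v_1v_5v_6v_7$'' double-counts $v_1$ and does not match any of Propositions~\ref{k12}--\ref{k13}. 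The correct statement, from Proposition~\ref{spider} with $k=2$, is $\gamma_{ri2}(T_{i+1})=\gamma_{ri2}(T_i)+4$, not $+5$: assign $1$ to $v_2$, $2$ to $v_4$, $2$ to $v_5$, $1$ to $v_7$ and $0$ to $v_1,v_3,v_6$; then $v_1$ receives both colors from $v_2$ and $v_5$ with no help from $x$, so the extension adds weight exactly $4$. Consequently your claim that ``there is a $\gamma_{ri2}(T_{i+1})$-function with $f(v_1)\neq 0$ forced'' is false, and the $+5$ error propagates through every case of your stability verification (each ``$\le$'' construction and each ``$\ge$'' reduction would be off by one). Your treatment of $u=x$ is also structurally wrong: $T_{i+1}-x$ is the disjoint union of $T_i-x$ with the intact path $P_7$ (only the edge $xv_1$ disappears), giving $\gamma_{ri2}(T_i-x)+\gamma_{ri2}(P_7)=\gamma_{ri2}(T_i)+4$, not $(T_i-x)\cup P_4\cup P_3$.

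You have also misplaced where the hypothesis $x\notin W_0(T_i)$ does its work. It is not needed to zero out $v_1$ in $T_{i+1}$ itself (as shown above, $v_2$ and $v_5$ supply both colors to $v_1$). It is needed precisely in the deletion cases $u\in\{v_2,v_3,v_4\}$: for instance, after deleting $v_2$, the vertex $v_1$ can only see one color among its remaining new neighbors ($v_5$), so to keep $f(v_1)=0$ and total added weight $4$ one must start from a $\gamma_{ri2}(T_i)$-function $g$ with $g(x)\neq 0$ supplying the missing color to $v_1$ --- which is exactly what $x\notin W_0(T_i)$ guarantees. With the corrected constant $+4$ and this corrected use of the hypothesis, the rest of your outline (normalizing via Proposition~\ref{pen3}, restricting $h$ to $T_i$ or to $T_i-x$ and invoking stability of $T_i$ for the lower bounds) goes through as in the paper.
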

\begin{proof}
By Proposition \ref{spider}, $\gamma_{ri2}(T_{i+1}) = \gamma_{ri2}(T_{i}) +4$. For any vertex $u$ of $T_{i+1}$, we must show that $\gamma_{ri2}(T_{i+1}-u) = \gamma_{ri2}(T_{i+1})$. For any vertex $u \in V(T_i)-\{x\}$, by Proposition \ref{spider} we have
$\gamma_{ri2}(T_{i+1}-u) =\gamma_{ri2}(T_{i} - u) + 4 =
   \gamma_{ri2}(T_i) + 4= \gamma_{ri2}(T_{i+1})$.
	Also clearly $\gamma_{ri2}(T_{i+1}-x) =\gamma_{ri2}(T_{i} - x) + \gamma_{ri2}(P_7) =
   \gamma_{ri2}(T_i) + 4 =\gamma_{ri2}(T_{i+1})$.
For the vertex $v_1$ we have 	$\gamma_{ri2}(T_{i+1}-v_1)=\gamma_{ri2}(T_{i}) + 2\gamma_{ri2}(P_3) =
   \gamma_{ri2}(T_i) + 4 = \gamma_{ri2}(T_{i+1})$.
	Thus without loss of generality, it remains to show that
	$\gamma_{ri2}(T_{i+1}-u)  =  \gamma_{ri2}(T_{i+1})$ when $u \in \{v_2,v_3,v_4\}$.
Since $x\not\in W_0(T_i)$, there is a $\gamma_{ri2}$-function $g$ on $T_{i}$ with $g(x)>0$,
say without loss of generality that $g(x) = 1$.

First we show that $\gamma_{ri2}(T_{i+1} - v_2) = \gamma_{ri2}(T_{i+1})$. Clearly, the function $g$ can be extended to a 2RiDF  on $T_{i+1}-v_2$ by assigning a 2 to $v_3,v_5$, a 1 to $v_4,v_7$ and a 0 to $v_1,v_6$ and so $\gamma_{ri2}(T_{i+1} - v_2) \le \gamma_{ri2}(T_{i})+4= \gamma_{ri2}(T_{i+1})$.
Now let $h$ be a $\gamma_{ri2}(T_{i+1} - v_2)$-function. By Observation \ref{obv}, we have $v_3,v_4,v_7\in V_1\cup V_2$. Clearly, $h$ assigns a positive integer to at least one of $v_5$ or $v_6$. If $h(x)\neq 0$, then the function $h$ restricted to $T_i$ is a  2RiDF  on $T_{i}$ and hence $\gamma_{ri2}(T_{i+1} - v_2) \ge \gamma_{ri2}(T_{i})+4= \gamma_{ri2}(T_{i+1})$. If $h(x)=0$, then the function $h$ restricted to $T_i-x$ is a  2RiDF  on $T_{i}-x$ and hence $\gamma_{ri2}(T_{i+1} - v_2) \ge \gamma_{ri2}(T_{i}-x)+4\ge \gamma_{ri2}(T_{i})+4= \gamma_{ri2}(T_{i+1})$.

Now we show that $\gamma_{ri2}(T_{i+1} - v_3) = \gamma_{ri2}(T_{i+1})$. Clearly, the function $g$ can be extended to a 2RiDF  on $T_{i+1}-v_3$ by assigning a 2 to $v_5$, a 1 to $v_2,v_4,v_7$ and a 0 to $v_1,v_6$ and so $\gamma_{ri2}(T_{i+1} - v_3) \le \gamma_{ri2}(T_{i})+4= \gamma_{ri2}(T_{i+1})$. Now let $h$ be a $\gamma_{ri2}(T_{i+1} - v_2)$-function. Clearly $v_4\in V_1\cup V_2$ and by Observation \ref{obv} we have $v_2,v_7\in V_1\cup V_2$. Also, $h$ must assign a positive integer to at least one of $v_5$ or $v_6$. As above, we can see that
$\gamma_{ri2}(T_{i+1} - v_3) \ge \gamma_{ri2}(T_{i})+4= \gamma_{ri2}(T_{i+1})$.

Finally, we show that $\gamma_{ri2}(T_{i+1} - v_4) = \gamma_{ri2}(T_{i+1})$.
Clearly, the function $g$ can be extended to a 2RiDF  on $T_{i+1}-v_3$ by assigning a 2 to $v_3,v_5$, a 1 to $v_2,v_7$ and a 0 to $v_1,v_6$ and hence $\gamma_{ri2}(T_{i+1} - v_3) \le \gamma_{ri2}(T_{i})+4= \gamma_{ri2}(T_{i+1})$. Now let $h$ be a $\gamma_{ri2}(T_{i+1} - v_2)$-function. By Observation \ref{obv}, $v_3,v_7\in V_1\cup V_2$. Also, $h$ must assign a positive integer to at least one of $v_5$ or $v_6$ and a positive integer to at least one of $v_1$ or $v_2$. As above, we can see that
$\gamma_{ri2}(T_{i+1} - v_4) \ge \gamma_{ri2}(T_{i})+4= \gamma_{ri2}(T_{i+1})$. Therefore $T_{i+1}$ is a $\gamma_{ri2}$-stable tree.
\end{proof}
\begin{lemma}\label{l3}
{\em If $T_i$ is  a $\gamma_{ri2}$-stable tree and $T_{i+1}$ is a tree obtained from
$T_i$ by Operation ${\mathcal O}_3$, then
$T_{i+1}$ is a $\gamma_{ri2}$-stable tree.}
\end{lemma}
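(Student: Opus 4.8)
The plan is to mimic the structure of the proofs of Lemmas \ref{l1} and \ref{l2}. By Proposition \ref{spider} we have $\gamma_{ri2}(T_{i+1}) = \gamma_{ri2}(T_i) + 2k$, so it suffices to show $\gamma_{ri2}(T_{i+1}-u) = \gamma_{ri2}(T_{i+1})$ for every vertex $u$ of $T_{i+1}$. I would split into cases according to where $u$ lies. First, if $u \in V(T_i)-\{x\}$, then $T_{i+1}-u$ is obtained from the tree $T_i-u$ by the same spider attachment at $x$ (note $x\in V(T_i)$ so Operation ${\mathcal O}_3$ only requires membership, not a $W_0$-condition), and $x$ still lies in the corresponding copy; so Proposition \ref{spider} applies to give $\gamma_{ri2}(T_{i+1}-u) = \gamma_{ri2}(T_i-u)+2k = \gamma_{ri2}(T_i)+2k = \gamma_{ri2}(T_{i+1})$, using that $T_i$ is $\gamma_{ri2}$-stable. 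For $u=x$: deleting $x$ separates the spider from $T_i-x$, so $T_{i+1}-x$ is the disjoint union of $T_i-x$ and $S_k^{v_1}$, whence $\gamma_{ri2}(T_{i+1}-x) = \gamma_{ri2}(T_i-x) + \gamma_{ri2}(S_k^{v_1}) = \gamma_{ri2}(T_i) + 2k = \gamma_{ri2}(T_{i+1})$, again by stability of $T_i$ and Definition \ref{exsp}.

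The remaining vertices lie inside the attached spider $S_k^{v_1}$. For $u=v_1$ (the head): $T_{i+1}-v_1$ is the disjoint union of $T_i$ and $k$ copies of $P_3$, so $\gamma_{ri2}(T_{i+1}-v_1) = \gamma_{ri2}(T_i) + k\gamma_{ri2}(P_3) = \gamma_{ri2}(T_i) + 2k$ by Proposition \ref{path}. For $u$ a vertex of one of the legs, say $u\in\{v_1^j,v_2^j,v_3^j\}$ for some $j$, the upper bound $\gamma_{ri2}(T_{i+1}-u) \le \gamma_{ri2}(T_i)+2k$ is routine: extend a $\gamma_{ri2}$-function on $T_i$ over the intact legs by the standard pattern (weight $2$ per leg) and cover the truncated $j$-th leg with the obvious small assignment of weight at most $2$ (if $u=v_3^j$, put $1$ on $v_1^j$ and $2$ nowhere needed, etc.; if $u=v_2^j$ put a $1$ on $v_3^j$ and a $2$ on $v_1^j$; if $u=v_1^j$, the path $v_2^jv_3^j$ needs weight $2$). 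For the matching lower bound I would take a $\gamma_{ri2}(T_{i+1}-u)$-function $h$ and argue, via Observation \ref{obv} and Proposition \ref{pen3} applied to the $k-1$ full legs, that $h$ forces weight at least $2$ on each full leg and at least weight on the truncated leg that is enough to make the restriction of $h$ to $T_i$ (or to $T_i$ after a bounded local modification near $x$) a 2RiDF of $T_i$ without spending less than $2k$ outside $V(T_i)$; then $\gamma_{ri2}(T_{i+1}-u) \ge \gamma_{ri2}(T_i) + 2k$.

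The main obstacle is the lower-bound bookkeeping when $u$ lies in a leg and $h$ assigns $0$ to $v_1$: then $v_1$ must be rainbow-dominated, possibly partly through $x$, so the restriction of $h$ to $T_i$ need not be a legal 2RiDF (the value at $x$ might be spent "helping" $v_1$). The fix, exactly as in Lemma \ref{l2}, is to note that $h$ restricted to $T_i-x$ is a 2RiDF of $T_i-x$, and then invoke $\gamma_{ri2}(T_i-x) \ge \gamma_{ri2}(T_i) - 1$ (Proposition \ref{-1}, left inequality) together with a careful count showing that the "lost" unit is recovered because the truncated leg plus $v_1$ then carries weight at least $2k - (\text{leg deficit}) + 1$; alternatively, one modifies $h$ locally (reassigning a value to $v_1$ or to a leg-vertex) to produce a 2RiDF of $T_i$ of weight $\le \omega(h) - 2k$ directly. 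Handling the three sub-cases $u=v_1^j, v_2^j, v_3^j$ uniformly — and in particular checking that one never needs more than weight $2$ on the truncated leg in the extension and never less than the "right" amount in the lower bound — is the only genuinely fiddly part; everything else is a direct transcription of the arguments already used for ${\mathcal O}_1$ and ${\mathcal O}_2$.
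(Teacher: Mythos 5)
Your proposal follows essentially the same route as the paper's proof: Proposition \ref{spider} gives $\gamma_{ri2}(T_{i+1})=\gamma_{ri2}(T_i)+2k$, the cases $u\in V(T_i)\setminus\{x\}$, $u=x$ and $u=v_1$ are handled by the same decompositions, the upper bound for leg vertices is an explicit extension of weight $2k$, and the lower bound is the restriction argument from Lemma \ref{l2} ($h|_{T_i}$ when $h(x)\neq 0$, and $h|_{T_i-x}$ combined with the stability of $T_i$ at $x$ when $h(x)=0$) --- indeed the paper itself merely says ``similar to Lemma \ref{l2}'' at that point. One parenthetical slip: for $u=v_3^j$ you cannot get away with weight $1$ on the truncated leg, since the new leaf $v_2^j$ must receive a nonzero value by Observation \ref{obv}; but weight $2$ there still yields the required total $2k$, as your own summary correctly asserts.
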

\begin{proof}
By Proposition \ref{spider}, we have $\gamma_{ri2}(T_{i+1}) = \gamma_{ri2}(T_{i}) +2k$. Now we show that $\gamma_{ri2}(T_{i+1}-u) = \gamma_{ri2}(T_{i+1})$ for each vertex $u$. If $u \in V(T_i)-\{x\}$, then by Proposition \ref{spider} we have
$\gamma_{ri2}(T_{i+1}-u) =\gamma_{ri2}(T_{i} - u) +2k =
   \gamma_{ri2}(T_i) + 2k= \gamma_{ri2}(T_{i+1})$.
For the vertex $x$ we have 	$\gamma_{ri2}(T_{i+1}-x)  = \gamma_{ri2}(T_{i} - x) + \gamma_{ri2}(S_k^{v_1}) =
\gamma_{ri2}(T_i) + 2k = \gamma_{ri2}(T_{i+1})$, and for the vertex $v_1$ we have
$\gamma_{ri2}(T_{i+1}-v_1)  = \gamma_{ri2}(T_{i}) + k\gamma_{ri2}(P_3) =
\gamma_{ri2}(T_i) + 2k = \gamma_{ri2}(T_{i+1})$.
Thus without loss of generality, it remains to prove that
$\gamma_{ri2}(T_{i+1}-u)  =  \gamma_{ri2}(T_{i+1})$ when $u \in \{v_1^1,v_2^1,v_3^1\}$.
	
Let $f$ be any $\gamma_{ri2}$-function  on $T_{i}$. Then $f$ can be extended to a 2RiDF  on $T_{i+1}-v_1^1$ by assigning a 2 to $v_2^1,v_1^2,v_3^3,\ldots,v_3^k$, a 1 to $v_3^1,v_3^2,v_1^3,\ldots,v_1^k$, and a 0 to $v_1,v_2^2,\ldots,v_2^k$, to a 2RiDF  on $T_{i+1}-v_2^1$ by assigning a 2 to $v_1^1,v_1^2,v_3^3,\ldots,v_3^k$, a 1 to $v_3^1,v_3^2,v_1^3,\ldots,v_1^k$, and a 0 to $v_1,v_2^2,\ldots,v_2^k$, and to a 2RiDF  on $T_{i+1}-v_3^1$ by assigning a 2 to $v_1^1,v_1^2,v_3^3,\ldots,v_3^k$, a 1 to $v_2^1,v_3^2,v_1^3,\ldots,v_1^k$, and a 0 to $v_1,v_2^2,\ldots,v_2^k$. This implies that $\gamma_{ri2}(T_{i+1}-u)\le \gamma_{ri2}(T_i) + 2k  =  \gamma_{ri2}(T_{i+1})$ when $u \in \{v_1^1,v_2^1,v_3^1\}$. Using an argument similar to that described in Lemma \ref{l2}, we can see that $\gamma_{ri2}(T_{i+1}-u) \ge  \gamma_{ri2}(T_{i+1})$ when $u \in \{v_1^1,v_2^1,v_3^1\}$.
Thus $\gamma_{ri2}(T_{i+1}-u)=\gamma_{ri2}(T_{i+1})$ when $u \in \{v_1^1,v_2^1,v_3^1\}$. Therefore $T_{i+1}$ is a $\gamma_{ri2}$-stable tree.
\end{proof}
\begin{theorem}\label{mainsuf}
Let T be a tree of order $n\geq 3$.  If $T\in \mathcal{T}$, then $T$ is a $\gamma_{ri2}$-stable tree.
 \end{theorem}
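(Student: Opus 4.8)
The plan is to prove Theorem \ref{mainsuf} by a straightforward induction on the length $r$ of the construction sequence $T_1, T_2, \ldots, T_r = T$ that witnesses $T \in \mathcal{T}$. For the base case $r = 1$, I would observe that $T = T_1 \in \{P_3\} \cup \{S_k^{v_1} \mid k \geq 3\}$, and each of these is $\gamma_{ri2}$-stable: for $P_3 = abc$, one checks directly that $\gamma_{ri2}(P_3) = 2$ by Proposition \ref{path} and that $\gamma_{ri2}(P_3 - a) = \gamma_{ri2}(P_2) = 2$, $\gamma_{ri2}(P_3 - b) = \gamma_{ri2}(2K_1) = 2$ (two isolated vertices, each needing weight $1$), so stability holds; for $S_k^{v_1}$ with $k \geq 3$, stability is exactly the content of Observation \ref{esp}. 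Hence every tree in $\mathcal{T}$ with $r = 1$ is $\gamma_{ri2}$-stable.

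For the inductive step, suppose every tree obtainable from a sequence of length $r$ is $\gamma_{ri2}$-stable, and let $T = T_{r+1}$ be obtained from such a tree $T_r$ by one of the three operations $\mathcal{O}_1, \mathcal{O}_2, \mathcal{O}_3$. By the induction hypothesis $T_r$ is $\gamma_{ri2}$-stable, so the hypotheses of Lemmas \ref{l1}, \ref{l2}, \ref{l3} are met. Applying whichever of these three lemmas corresponds to the operation used, we conclude immediately that $T_{r+1}$ is a $\gamma_{ri2}$-stable tree. This closes the induction, and since $T \in \mathcal{T}$ with $n \geq 3$ was arbitrary, the theorem follows.

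In other words, the real work of this direction has already been carried out in the preliminary propositions (\ref{k12}, \ref{spider}) and in Lemmas \ref{l1}--\ref{l3}, which establish precisely that each operation preserves $\gamma_{ri2}$-stability; the theorem is then just the bookkeeping that an arbitrary member of $\mathcal{T}$ is reached from the stable base trees by finitely many applications of these operations. The only genuine subtlety — and the one point I would be careful to state — is the base case: one must verify that $P_3$ itself is stable (a one-line computation) and invoke Observation \ref{esp} for the spiders $S_k^{v_1}$ with $k \geq 3$; after that, there is no obstacle, since the lemmas do all the heavy lifting. I would therefore present the proof as: ``We proceed by induction on $r$. If $r = 1$ then $T \in \{P_3\} \cup \{S_k^{v_1} : k \geq 3\}$ is $\gamma_{ri2}$-stable by the above remark and Observation \ref{esp}. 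If $r \geq 2$, then $T$ is obtained from a $\gamma_{ri2}$-stable tree (by induction) via $\mathcal{O}_1$, $\mathcal{O}_2$, or $\mathcal{O}_3$, and is therefore $\gamma_{ri2}$-stable by Lemma \ref{l1}, \ref{l2}, or \ref{l3}, respectively.''
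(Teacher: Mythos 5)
Your proof is correct and follows essentially the same route as the paper: induction on the number of operations, with the base case settled for $P_3$ and $S_k^{v_1}$ ($k\ge 3$) and the inductive step delegated to Lemmas \ref{l1}, \ref{l2} and \ref{l3}. Your explicit verification that $P_3$ is stable is a small but welcome addition, since Observation \ref{esp} as stated only covers the spiders $S_k^{v_1}$ with $k\ge 3$.
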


\begin{proof} Let $T\in \mathcal{T}$. We  proceed by induction on $l$, the number of operations used to construct $T$.
The base case is immediate by Observation \ref{esp}, since  either $T=P_3$
or $T=S^{v_1}_k\;(k\ge 3)$. Let $l \geq 1$ and suppose that
each tree $H$ in $\mathcal{T}$  which can be obtained from
a sequence of less than $l$ operations is a $\gamma_{ri2}$-stable tree.
Let $T \in \mathcal{T}$ and $T_1 \in \{P_3, S_k^{v_1}\}$,  $T_2, \ldots, T_{l+1}=T$  be a  sequence of trees
such that $T_{i+1}$ can be obtained from $T_i$ by one of the Operations
${\mathcal O}_1, {\mathcal O}_2$ or ${\mathcal O}_3$.
By the induction hypothesis, $T_{l}$ is a $\gamma_{ri2}$-stable tree.
Since $T=T_{l+1}$ is obtained  from $T_l$ by one of the operations
 ${\mathcal O}_1, {\mathcal O}_2$ or ${\mathcal O}_3$,
we conclude from  Lemmas \ref{l1}, \ref{l2} and \ref{l3} that
$T$ is a $\gamma_{ri2}$-stable tree.
\end{proof}

\begin{theorem}\label{mainnes}
Let T be a tree of order $n\geq 3$.  If $T$ is a 2-rainbow independent domination  stable tree,
then $T\in \mathcal{T}$.
 \end{theorem}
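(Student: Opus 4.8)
The plan is to prove the statement by induction on the order $n$ of $T$. The base cases are $T=P_3$ and $T=S_k^{v_1}$ with $k\ge 3$, which lie in $\mathcal{T}$ as the admissible choices of $T_1$; using Proposition \ref{path}, Corollary \ref{n-1} and the non-stability statements of Proposition \ref{k14} one also checks that, apart from $DS_{2,2}=\mathcal{O}_1(P_3,c)\in\mathcal{T}$ (with $c$ the centre of $P_3$), there is no further $\gamma_{ri2}$-stable tree of small order, which guarantees that a reduction step below can never produce a tree of order at most two. For the inductive step, let $T$ be a $\gamma_{ri2}$-stable tree with $T\notin\{P_3\}\cup\{S_k^{v_1}:k\ge 3\}$, fix a diametrical path $v_0v_1\cdots v_d$ and root $T$ at $v_0$. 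The engine of the argument is that every one of the seven configurations of Proposition \ref{k14} is forbidden in $T$; together with the freedom to re-select the diametrical path, this forces the pendant structure at the bottom of the tree to be exactly one of the gadgets appended by $\mathcal{O}_1,\mathcal{O}_2,\mathcal{O}_3$, after which we delete that gadget, verify the reduced tree is again $\gamma_{ri2}$-stable, and invoke the induction hypothesis.

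Concretely, I first treat the case in which $T$ has a vertex $s$ with exactly two leaf neighbours. Using Proposition \ref{k14}(1),(2),(6) (and re-choosing the diametrical path where necessary) one reduces to $\deg_T(s)=3$; writing $x$ for the third neighbour of $s$ and $T'=T-(\{s\}\cup L(s))$, Proposition \ref{k12} gives $\gamma_{ri2}(T)=\gamma_{ri2}(T')+2$. Then $x\in W_0(T')$: if some $\gamma_{ri2}(T')$-function were positive at $x$, it would extend to a 2RiDF of $T$ minus one leaf of $s$ of weight $\gamma_{ri2}(T')+1<\gamma_{ri2}(T)$, contradicting the stability of $T$. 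And $T'$ is $\gamma_{ri2}$-stable: for $u\in V(T')\setminus\{x\}$ Proposition \ref{k12} yields $\gamma_{ri2}(T-u)=\gamma_{ri2}(T'-u)+2$, whereas $T-x$ is the disjoint union of $T'-x$ with a $P_3$, so $\gamma_{ri2}(T-x)=\gamma_{ri2}(T'-x)+2$; in either case the stability of $T$ forces $\gamma_{ri2}(T'-u)=\gamma_{ri2}(T')$. Hence $T'\in\mathcal{T}$ by induction and $T=\mathcal{O}_1(T',x)\in\mathcal{T}$.

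In the remaining case $T$ has no strong support vertex, so every support vertex has exactly one leaf. Then Proposition \ref{k14}(1),(2) forces $\deg_T(v_{d-2})=2$, whence $v_{d-3}$ carries a pendant path $v_{d-3}v_{d-2}v_{d-1}v_d$ of length $3$; examining the other children of $v_{d-3}$ (each spanning a subtree of height at most two, by the diameter bound) and discarding the undesirable shapes via Proposition \ref{k14}(3),(4),(5),(7) — re-choosing the diametrical path so as to fall back into the previous case whenever a strong support vertex surfaces — one shows that either $T$ is a base case or $v_{d-3}$ is the head of a spider $S_k^{v_1}$ with $k\ge 2$, all of whose legs have length $3$, and that $v_{d-3}$ has exactly one further neighbour $x$ (if $k=2$ and there were no such $x$ then $T=S_2^{v_1}$, which is not $\gamma_{ri2}$-stable). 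Put $T'=T-D[v_{d-3}]$ in the rooting at $v_0$, deleting $v_{d-3}$ together with all $3k$ leg-vertices but keeping $x$. By Proposition \ref{spider}, $\gamma_{ri2}(T)=\gamma_{ri2}(T')+2k$, and, using Propositions \ref{spider} and \ref{pen3} together with $\gamma_{ri2}(S_k^{v_1})=2k$ (Definition \ref{exsp}) for the subcase $u=x$, one verifies exactly as before that $T'$ is $\gamma_{ri2}$-stable. If $k\ge 3$ then $T=\mathcal{O}_3(T',x)\in\mathcal{T}$ by induction. If $k=2$ one argues in addition that $x\notin W_0(T')$: were $x\in W_0(T')$, then deleting a neighbour $z$ of $v_{d-3}$ that starts a leg would split off a $P_2$ and leave a tree of $\gamma_{ri2}$ at least $\gamma_{ri2}(T')+3$ (any $\gamma_{ri2}(T')$-function positive at $x$ has weight at least $\gamma_{ri2}(T')+1$), so that $\gamma_{ri2}(T-z)\ge\gamma_{ri2}(T)+1$, contradicting stability; then $T=\mathcal{O}_2(T',x)\in\mathcal{T}$ by induction.

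The main obstacle is twofold. The structural part — proving that the pendant part of $T$ must be one of the three gadgets — is a lengthy case analysis in which essentially every clause of Proposition \ref{k14} is used to eliminate a competing local configuration, and one has to be scrupulous about re-selecting the diametrical path so that the ``strong support vertex'' branch and the ``long leg / spider head'' branch do not slip past one another. More delicate still is re-establishing the $\gamma_{ri2}$-stability of the reduced tree $T'$ together with the correct membership of $x$ in or outside $W_0(T')$: the vertex-deletion weight identities valid in $T'$ must be transported back to $T$ through Propositions \ref{k12}, \ref{k13}, \ref{spider} and \ref{pen3}, and the cases in which the deleted vertex is the attachment vertex $x$ (where $T-x$ disconnects) or a neighbour of $x$ require separate treatment, since $W_0$ does not behave monotonically under these constructions.
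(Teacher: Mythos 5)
Your overall strategy (induction on $n$, undoing one of the three operations at the bottom of a diametrical path, using Propositions \ref{k12}, \ref{k13}, \ref{spider}, \ref{pen3} and \ref{k14} to pin down the pendant gadget and to transfer stability to the reduced tree) is the same as the paper's, and your treatment of the spider cases, of $x\in W_0(T')$ for $\mathcal{O}_1$ and of $x\notin W_0(T')$ for $\mathcal{O}_2$ matches the paper's Case 1 and Subcase 4.3. However, your case division contains a genuine gap. In Case A you assume that \emph{any} vertex $s$ of a stable tree with exactly two leaf neighbours can be forced, via Proposition \ref{k14}(1),(2),(6) and a re-choice of the diametrical path, to have $\deg_T(s)=3$. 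That is false. Take $T$ to be the tree obtained from the path $asb$ by attaching two spiders $S_3^{c_1}$ and $S_3^{c_2}$ and joining $c_1$ and $c_2$ to $s$ (i.e.\ $P_3$ followed by two applications of $\mathcal{O}_3$ at its centre). This $T$ lies in $\mathcal{T}$, hence is $\gamma_{ri2}$-stable by Theorem \ref{mainsuf}; one can also check directly that $\gamma_{ri2}(T)=14$ and that deleting any vertex leaves the value unchanged. Its unique strong support vertex $s$ has degree $4$, none of the configurations \ref{k14}(1),(2),(6) occurs at $s$ (its two non-leaf neighbours head subtrees of depth $3$), and no diametrical path of $T$ has $s$ as its penultimate vertex (the eccentricity of $a$ is $5$ while ${\rm diam}(T)=8$). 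So your Case A applies to $T$ but its reduction fails: $T-(\{s\}\cup L(s))$ is disconnected and is not obtained by undoing $\mathcal{O}_1$; meanwhile your Case B explicitly excludes $T$ because $T$ does have a strong support vertex. Thus the two cases as stated do not cover this tree.

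The fix is the one the paper uses: organise the analysis around the penultimate vertex $v_2$ of a diametrical path chosen so that $\deg(v_2)$ is as large as possible. Since all children of $v_2$ are leaves, Proposition \ref{k14}(6) then really does give $\deg(v_2)\le 3$, the case $\deg(v_2)=3$ is exactly the removable $\mathcal{O}_1$-gadget, and the case $\deg(v_2)=2$ leads to the analysis at $v_4$ (your Case B). A strong support vertex that is not reachable as such a $v_2$ (like $s$ above) is then never processed directly; it is simply the attachment vertex $x$ for a spider removed by $\mathcal{O}_3$. As a smaller point, your Case B claim that $v_{d-3}$ has ``exactly one further neighbour $x$'' also needs the diametrical-path normalisation to be justified; otherwise the remaining children of $v_{d-3}$ must be argued to be leg-starts of the spider, which is exactly the paper's Subcase 4.2 elimination.
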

\begin{proof}
Let $T$ be a 2-rainbow independent domination  stable tree. The proof is by induction on $n$.
If $n=3$, then $T=P_3 \in \mathcal{T}$. Let $n\ge 4$ and let the result hold for all 2-rainbow independent domination stable trees $T$ of order less than $n$. Since $T$ is a 2-rainbow independent domination  stable tree, we deduce from Corollary \ref{n-1} that ${\rm diam}(T)\ge 3$.
If ${\rm diam}(T)=3$, then $T$ is a double star and we conclude from Proposition \ref{k14} (parts 1,6) that $T=DS_{2,2}$. Then $T$ can be obtained from $P_3$ by applying Operation $\mathcal{O}_1$ and so $T\in \mathcal{T}$.

Henceforth we assume that ${\rm diam}(T)\ge 4$. Let $v_1 \ldots v_k \;(k\ge 5)$ be a diametrical path in $T$ such that $\deg(v_2)$ is as large as possible and root $T$ at $v_k$. Since $T$ is a 2-rainbow independent domination  stable tree, we deduce from Proposition  \ref{k14} (item 6) that $\deg(v_2)\le 3$. We distinguish the following cases.

\smallskip
\noindent{\bf Case 1.} $\deg(v_2) = 3$.\\
Let $T'= T - T_{v_2}$ and $L_{v_2}=\{v_1,w\}$.
By Corollary \ref{leaf2} and by the assumption that $T$ is a 2-rainbow independent domination stable tree, for any $\gamma_{ri2}(T)$-function $f$ we have $f(v_2)=0$ and $f(v_1),f(v_2)\in \{1,2\}$.
Hence for any $v\in V(T')$, we deduce from Proposition \ref{k12} and by the assumption that
$\gamma_{ri2}(T')+2=\gamma_{ri2}(T)=\gamma_{ri2}(T-v)=\gamma_{ri2}(T'-v)+2$. This implies that $\gamma_{ri2}(T')=\gamma_{ri2}(T'-v)$  for each $v\in V(T')$. Hence $T'$ is a 2-rainbow independent domination stable tree and it follows from the induction hypothesis that $T'\in \mathcal{T}$.

We show next that $v_3\in W_0(T')$. If $v_3\not\in W_0(T')$ and $f_1$ is a $\gamma_{ri2}(T')$-function with $f_1(v_3)>0$, say $f_1(v_3)=1$, then $f_1$ can be extended to a 2IRDF of $T-w$ by assigning a 0 to $v_2$ and a 2 to $v_1$ implying that $\gamma_{ri2}(T- \{w\})\le \gamma_{ri2}(T)-1$, a contradiction. Thus $v_3\in W_0(T')$. Now $T$ can be obtained from $T'$ by Operation ${\mathcal O}_1$, yielding $T\in \mathcal{T}$.

\smallskip
\noindent{\bf Case 2.} $\deg(v_2) = 2$.\\
Since $T$ is a 2-rainbow independent domination stable tree, we deduce from Proposition \ref{k14} (parts 1,2) that  $\deg(v_3) = 2$. It follows from Proposition \ref{k14} (part 4) that  $\deg(v_4) \ge 3$.
We consider the following subcases.

\smallskip
{\bf Subcase 4.1.} $v_4$ has a children $y$ with depth 1.\\
Proposition \ref{k14} (parts 3,6) yields  $\deg(y) =3$. As in Case 1, we can see that $T\in \mathcal{T}$.

\smallskip
{\bf Subcase 4.2.} $v_4$ has a child $x$ with depth 0.\\
Since $T$ is a 2-rainbow independent domination stable tree, we deduce from Proposition \ref{k14} (part 7) that $\deg(v_4)\ge 4$. Let $f$ be a $\gamma_{ri2}(T)$-function. First let $v_4$ be a strong support vertex and let $y\in L(v_4)-\{x\}$. Then we have $f(v_1),f(x),f(y)\in \{1,2\}$. Since $T$ is a 2-rainbow independent domination stable tree, by Corollary \ref{leaf2} we must have $f(v_2)=f(v_4)=0$. Without loss of generality we may assume that $f(v_1)=f(x)=1$ and $f(v_3)=2$. Then the function $f$ restricted to $T-y$ is a 2RIDF of $T-y$ of weight less than $\gamma_{ri2}(T)$ which is a contradiction.
Now, suppose $v_4$ is not a strong support vertex. Considering above cases and subcases, we may assume that $T_{v_4}-x$ is an extended spider.
Then by Proposition \ref{k14} (part 5) we get a contradiction.
Assume that $v_4z_3z_2z_1$ is a path in $T$ such that $\deg(z_3)=\deg(z_2)=2$ and $\deg(z_1)=1$. Then we have $f(v_1),f(x),f(z_1)\in \{1,2\}$. Since $T$ is a 2-rainbow independent domination stable tree, we deduce from Corollary \ref{leaf2} that $f(v_2)=f(z_2)=f(v_4)=0$. Without loss of generality that we may assume $f(v_1)=f(x)=f(z_3)=1$ and $f(v_3)=f(z_1)=2$. Then the function $f$ restricted to $T-x$ is a 2RIDF of $T-x$ of weight less than $\gamma_{ri2}(T)$ which is a contradiction.

\smallskip
{\bf Subcase 4.3.} $T_{v_4}=S^{v_4}_{\deg(v_4)-1}$.\\
Let $T'=T-T_{v_4}$. Since $T$ is a 2-rainbow independent domination stable tree, we deduce from Proposition \ref{spider} that $T'$ is a 2-rainbow independent domination stable tree and by the induction hypothesis we have $T'\in \mathcal T$. If $\deg(v_4)\ge 4$, then $T$ can be obtained from $T'$ by Operation $\mathcal O_3$ and so $T\in \mathcal T$. Let $\deg(v_4)=3$ and $v_4z_3z_2z_1$ be a path in $T$ such that $\deg(z_3)=\deg(z_2)=2$ and $\deg(z_1)=1$. Now we show that $v_5\notin W_0(T')$. Since $T$ is a 2-rainbow independent domination stable tree, we have $\gamma_{ri2}(T-z_3)=\gamma_{ri2}(T)=\gamma_{ri2}(T')+4$. Let $f$ be a $\gamma_{ri2}(T-z_3)$-function. We may assume without loss of generality that $f(v_1)=f(z_1)$ and $f(z_2)=2$. Also we must have $f(v_2)\in \{1,2\}$ or $f(v_3)\in \{1,2\}$. If $f(v_4)=0$, to dominate $v_4$ we must have $f(v_5)\neq 0$ and then the function $f$ restricted to $T'$ is a $\gamma_{ri2}(T')$-function such that $f(v_5)\neq 0$ which implies that $v_5\notin W_0(T')$. Let $f(v_4)\neq 0$. If $f(v_5)\neq 0$ or $f(v_5)=0$ and $f(v_4)\in \{f(u)\mid u\in N_{T'}(v_5)\}$, then the function $g:V(T)\to \{0,1,2\}$ defined by $g(v_1)=g(z_3)=1$, $g(z_1)=g(v_3)=2$, $g(z_2)=g(v_4)=g(v_2)=0$ and $g(x)=f(x)$ otherwise, is a 2RIDF of $T$ of weight less than $\omega(f)$ which is a contradiction. Hence $f(v_5)=0$ and $f(v_4)\not\in \{f(u)\mid u\in N_{T'}(v_5)\}$. Then the function $h:V(T)\to \{0,1,2\}$ defined by $h(v_5)=f(v_4)$ and $h(x)=f(x)$ otherwise, is a $\gamma_{ri2}(T')$-function such that $h(v_5)\neq 0$ which implies that $v_5\notin W_0(T')$. Now $T$ can be obtained from $T'$ by Operation $\mathcal O_2$ and so $T\in \mathcal T$.
\end{proof}
Combining Theorems \ref{mainsuf} and \ref{mainnes}, we obtain Theorem \ref{main}.
\begin{corollary}
There exists  an $n$-order $\gamma_{ri2}$-stable tree if and only if
$n \in \{3,6,9,10,12,13\} \cup \{15,16,\ldots\}$.
\end{corollary}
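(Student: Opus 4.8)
The plan is to reduce everything to the constructive characterization of Theorem~\ref{main}, so that the question becomes: which integers are orders of trees in $\mathcal{T}$? Write $S=\{3,6,9,10,12,13\}\cup\{n:n\ge 15\}$. The first observation is the arithmetic identity $S=\{\,3+3a+7b:a,b\in\mathbb{Z}_{\ge 0}\,\}$: the Frobenius number of $\{3,7\}$ is $3\cdot 7-3-7=11$, so the numerical semigroup generated by $3$ and $7$ contains every integer $\ge 12$ together with exactly $0,3,6,7,9,10$ among the smaller nonnegative integers, and adding $3$ throughout yields precisely $S$. In particular $S$ is closed under the maps $m\mapsto m+3$ and $m\mapsto m+7$.

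For the ``only if'' direction, suppose $T$ is a $\gamma_{ri2}$-stable tree. If $|V(T)|\le 2$ then $T\in\{K_1,K_2\}$, neither of which is $\gamma_{ri2}$-stable (a one-line check), so we may assume $|V(T)|\ge 3$ and invoke Theorem~\ref{main} to get $T\in\mathcal{T}$. I would then induct on the length $r$ of a defining sequence $T_1,\dots,T_r=T$ to show $|V(T)|\in S$. In the base case $T_1\in\{P_3\}\cup\{S_k^{v_1}:k\ge 3\}$, so $|V(T_1)|=3$ or $|V(T_1)|=3k+1$ with $k\ge 3$; all these lie in $S$. In the inductive step $T_r$ arises from $T_{r-1}$ by $\mathcal{O}_1$, $\mathcal{O}_2$ or $\mathcal{O}_3$, which enlarge the order by $3$, by $7$, or by $3k+1=3(k-2)+7$ (with $k-2\ge 1$), respectively; since $|V(T_{r-1})|\in S$ by induction and $S$ is closed under adding $3$ and $7$, also $|V(T_r)|\in S$.

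For the ``if'' direction, let $n\in S$ and write $n=3+3a+7b$ with $a,b\ge 0$; by Theorem~\ref{main} it suffices to produce a tree in $\mathcal{T}$ of order $n$. I would build it from $T_1=P_3$ by performing $\mathcal{O}_1$ exactly $a$ times and then $\mathcal{O}_2$ exactly $b$ times, the only point to verify being that at each stage the vertex required by the operation (one in $W_0$ for $\mathcal{O}_1$, one outside $W_0$ for $\mathcal{O}_2$) is available. For the $\mathcal{O}_1$ stages: the middle vertex of $P_3$ lies in $W_0(P_3)$ since every $\gamma_{ri2}(P_3)$-function assigns $0$ there, and the key point is that \emph{whenever $\mathcal{O}_1$ is applied at a vertex $x\in W_0(T')$, the new vertex $v_1$ lies in $W_0$ of the resulting tree}; granting this, the $a$ applications of $\mathcal{O}_1$ are performed in succession, each at the $v_1$ created by the previous one. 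For the $\mathcal{O}_2$ stages: by Observation~\ref{obv} no leaf lies in $W_0$, the original leaves of $P_3$ survive every $\mathcal{O}_1$ step, and each application of $\mathcal{O}_2$ itself creates new leaves (for instance $v_7$), so the $b$ applications of $\mathcal{O}_2$ can likewise be done one after another. The tree produced lies in $\mathcal{T}$ and has order $3+3a+7b=n$, hence is $\gamma_{ri2}$-stable by Theorem~\ref{main}.

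The one nonroutine step is the ``regeneration'' claim just used, and I expect it to be the main obstacle. To prove it, let $T''$ be obtained from $T'$ by $\mathcal{O}_1$ at $x\in W_0(T')$ with new path $v_2v_1v_3$, and let $f$ be a $\gamma_{ri2}(T'')$-function; one must show $f(v_1)=0$. By Observation~\ref{obv} the leaves $v_2,v_3$ lie in $V_1\cup V_2$, and by Proposition~\ref{k12} $\omega(f)=\gamma_{ri2}(T')+2$. Suppose $f(v_1)\ne 0$. Then the restriction of $f$ to $V(T')$ has weight $\gamma_{ri2}(T')-1$; if it happens to be a $2$RiDF of $T'$ this contradicts minimality, so it must fail, and it can fail only at $x$ (the only vertex of $T'$ whose neighbourhood is affected by deleting $v_1,v_2,v_3$). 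Hence $f(x)=0$ and $v_1$ is the only neighbour of $x$ of colour $f(v_1)$, so recolouring $x$ with $f(v_1)$ gives a $2$RiDF of $T'$ of weight $\gamma_{ri2}(T')$ that is positive at $x$, contradicting $x\in W_0(T')$. Therefore $f(v_1)=0$, proving the claim. All remaining ingredients --- the identity $S=\{3+3a+7b\}$, the closure of $S$ under adding $3$ and $7$, and the triviality that a leaf is never in $W_0$ --- are routine. (One could shorten the construction slightly by realizing the residue class $n\equiv 1\pmod 3$, $n\ge 10$, directly via the spiders $S_k^{v_1}$ and Observation~\ref{esp}, but the construction above already covers all of $S$.)
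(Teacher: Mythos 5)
Your proof is correct, and it takes the route the paper clearly intends (the corollary is stated without proof): combine the constructive characterization of Theorem~\ref{main} with the observation that the base trees have orders $3$ or $3k+1$ and the operations add $3$, $7$, or $3k+1=3(k-2)+7$ vertices, so the realizable orders are exactly $3+\langle 3,7\rangle=\{3,6,9,10,12,13\}\cup\{n\ge 15\}$. The one substantive detail you add that the paper glosses over --- the ``regeneration'' lemma that applying $\mathcal{O}_1$ at $x\in W_0(T')$ places the new vertex $v_1$ in $W_0$ of the resulting tree, which is what licenses iterating $\mathcal{O}_1$ --- is argued correctly and is genuinely needed for the ``if'' direction.
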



\section{Edge removal: trees}
In this section we study the effect of edge removal on 2-rainbow independent domination number in trees. We begin with a simple proposition.

\begin{proposition}\label{edgedel}
Let $e=xy$  be an edge of a tree $T$. Then
\begin{itemize}
\item[(i)]  $\gamma_{ri2}(T) \leq  \gamma_{ri2}(T-e) \leq \gamma_{ri2}(T) + 1$.
\item[(ii)]  $\gamma_{ri2}(T-e) = \gamma_{ri2}(T) + 1$ if and only if for each
                    $\gamma_{ri2}$-function $f=(V_0; V_1; V_2)$ on $T$ exactly one of
										$x$ and $y$ is in $V_0$ and this one has exactly $2$ neighbors in $V_1 \cup V_2$.
\end{itemize}
\end{proposition}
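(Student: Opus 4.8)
The plan rests on the observation that deleting $e=xy$ splits $T$ into the component $T_x$ containing $x$ and the component $T_y$ containing $y$, that the restriction of any 2RiDF of $T-e$ to a component is a 2RiDF of that component, and that the restriction of a $\gamma_{ri2}(T)$-function to $T-e$ is a 2RiDF of $T-e$ \emph{except possibly} at one endpoint of $e$ --- namely an endpoint in $V_0$ whose only neighbour of some colour was the other endpoint (and at most one of $x,y$ can be in this situation, since it forces the other endpoint to carry a positive label). With these remarks part (i) is quick. For the lower bound, take a $\gamma_{ri2}(T-e)$-function $f$: if $f(x),f(y)$ are not both equal and positive, then re-adding $e$ keeps $V_1,V_2$ independent and can only help domination, so $f$ is a 2RiDF of $T$; otherwise $f(x)=f(y)=i$, and keeping $f$ on $T_x$ while interchanging the two colours throughout $T_y$ yields a 2RiDF of $T$ of the same weight. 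For the upper bound, take a $\gamma_{ri2}(T)$-function $f$: if no endpoint of $e$ loses a colour in $T-e$, then $f$ itself is a 2RiDF of $T-e$; if, say, $x\in V_0$ loses its only $i$-neighbour $y$ (with $i=f(y)$), then raising the label of $x$ from $0$ to $i$ repairs $f$ on $T-e$ at cost $1$ without spoiling independence.

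For the sufficiency in (ii), suppose every $\gamma_{ri2}(T)$-function has exactly one endpoint of $e$ in $V_0$ and that endpoint has exactly two neighbours in $V_1\cup V_2$; I want $\gamma_{ri2}(T-e)>\gamma_{ri2}(T)$, which with (i) forces $\gamma_{ri2}(T-e)=\gamma_{ri2}(T)+1$. If instead some 2RiDF $g$ of $T-e$ had weight $\gamma_{ri2}(T)$, the constructions of part (i) turn $g$ into a $\gamma_{ri2}(T)$-function $f'$ of $T$: either $f'=g$, or $f'$ is $g$ with the colours swapped on $T_y$. In the swap case both endpoints of $e$ receive positive labels under $f'$, contradicting the hypothesis. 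In the case $f'=g$, the endpoint $a$ with $g(a)=0$ has the other endpoint $b$ (which is positive, again by the hypothesis) among its exactly two positive neighbours in $T$, so in $T-e$ it has only one positive neighbour and cannot be rainbow-dominated --- again a contradiction.

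For the necessity in (ii), assume $\gamma_{ri2}(T-e)=\gamma_{ri2}(T)+1$ and let $f$ be any $\gamma_{ri2}(T)$-function. If $x,y$ are both in $V_0$, or neither is (so $\{f(x),f(y)\}=\{1,2\}$ by independence), then $f$ restricted to $T-e$ is already a 2RiDF of $T-e$ of weight $\gamma_{ri2}(T)$, contradicting (i); so exactly one endpoint, say $x$, lies in $V_0$, with $f(y)=i\neq0$. If $x$ had a neighbour other than $y$ of colour $i$, then $f$ restricted to $T-e$ would again be a 2RiDF of $T-e$; so $y$ is the unique $i$-neighbour of $x$, and every other positive neighbour of $x$ carries colour $3-i$. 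Finally, if $x$ had three or more positive neighbours, it would have two children $z_1,z_2$ of colour $3-i$ once $T_x$ is rooted at $x$; swapping the two colours on the entire subtree hanging at $z_1$ makes $z_1$ an $i$-neighbour of $x$ and turns $f|_{T_x}$ into a genuine 2RiDF of $T_x$ of the same weight, which glued to $f|_{T_y}$ gives a 2RiDF of $T-e$ of weight $\gamma_{ri2}(T)$, a contradiction. Hence $x$ has exactly two positive neighbours.

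The step I expect to be most delicate is the very last one: ruling out three or more positive neighbours at the $V_0$-endpoint. Everything else is essentially local, but here one genuinely uses the component structure of $T-e$, and the subtree colour-swap needs a careful check that interchanging $1$ and $2$ on a rooted subtree preserves the independence of $V_1,V_2$ (only the single edge from the subtree to its parent crosses the cut, and the parent stays in $V_0$) and preserves rainbow-domination of every $0$-vertex (a $0$-vertex inside the subtree still sees both colours after the swap, and the parent $x$ now sees colour $i$ via $z_1$ and colour $3-i$ via $z_2$). I expect the analogous verifications for the two colour-swaps in part (i) and in the sufficiency part to account for most of the remaining routine work.
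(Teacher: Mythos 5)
Your proof is correct and follows essentially the same route as the paper's: restriction plus a single colour adjustment for both inequalities in (i), and for (ii) the same subtree colour-swap at the $V_0$-endpoint to rule out three or more positive neighbours, together with the same ``lift a minimum 2RiDF of $T-e$ to a $\gamma_{ri2}$-function of $T$ and contradict the hypothesis'' argument for sufficiency. You merely make explicit (via the component/subtree swaps) what the paper compresses into ``we can choose $f$ so that $f(x)\neq f(y)$,'' so no substantive difference or gap remains.
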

\begin{proof}  (i) Let $f = (V_0; V_1; V_2)$ be an arbitrary  $\gamma_{ri2}$-function on $T-e$.
 Clearly, we can choose $f$ so that $f(x) \not= f(y)$ when $x,y \in V_1 \cup V_2$.
But then $f$ is a 2RiDF on $T$, which implies $\gamma_{ri2}(T) \leq \gamma_{ri2}(T-e)$.

To prove the right inequality, let $f$ be a $\gamma_{ri2}$-function on $T$. If $0 \not\in \{f(x),f(y)\}$ or $f(x)=f(y)=0$, then $f$ is a 2RiDF on $T-e$ and so $\gamma_{ri2}(T-e) \leq \gamma_{ri2}(T)$. Assume without loss of generality that $f(x)=1$ and $f(y)=0$. If $1\in \{f(u)\mid u\in N(y)-\{x\}\}$, then $f$ is a 2RiDF on $T-e$ and so $\gamma_{ri2}(T-e) \leq \gamma_{ri2}(T)$. If $1\not\in \{f(u)\mid u\in N(y)-\{x\}\}$, then the function $g:V(T-e)\to \{0,1,2\}$ defined by $g(y)=1$ and $g(u)=f(u)$ otherwise, is a $\gamma_{ri2}$-function on $T-e$ and so $\gamma_{ri2}(T-e) \leq \gamma_{ri2}(T)+1$. Thus $\gamma_{ri2}(T-e) \leq \gamma_{ri2}(T)+1$.

(ii) Let $\gamma_{ri2}(T-e) = \gamma_{ri2}(T) + 1$ and let $f = (V_0; V_1; V_2)$ be an arbitrary $\gamma_{ri2}$-function on $T$. A close look at the proof of $(i)$, shows that exactly one of $x$ and $y$ is in $V_0$.
			Suppose without loss of generality that $x \in V_1$ and $y \in V_0$. Then we must have
			$N(y) \cap V_1 = \{x\}$. Root $T$ at $x$. If $|N(y) \cap V_2|\ge 2$ and $z_1,z_2 \in N(y) \cap V_2$, then the function $g:V(T-e)\to \{0,1,2\}$ defined by $g(u)=1$ if $u\in T_{z_1}$ and $f(u)=2$, $g(u)=2$ if $u\in T_{z_1}$ and $f(u)=1$, and $g(u)=f(u)$ otherwise, is a $\gamma_{ri2}$-function on $T-e$ yielding
$\gamma_{ri2}(T-e) \leq \gamma_{ri2}(T)$ which is a contradiction. Thus $y$ has exactly $2$ neighbors in $V_1 \cup V_2$.
				
Conversely, let for any $\gamma_{ri2}$-function $f=(V_0; V_1; V_2)$ on $T$ exactly one of $x$ and $y$ is in $V_0$ and this one has exactly $2$ neighbors in $V_1 \cup V_2$. Suppose, to the contrary, that $\gamma_{ri2}(T-xy) \not= \gamma_{ri2}(T) + 1$.
       By (i) we have  $\gamma_{ri2}(T-xy) = \gamma_{ri2}(T)$. Let $l = (V_0^l; V_1^l; V_2^l)$ be a
			$\gamma_{ri2}$-function on $T-xy$. Clearly, we can choose $l$ so that
			$l(x) \not= l(y)$ when neither $l(x)$ nor $l(y)$ is $0$. Then $l$ is a 2RiDF on
			$T$ and since $\gamma_{ri2}(T-xy) = \gamma_{ri2}(T)$,  $l$ is a  $\gamma_{ri2}$-function on $T$. But, $l$ does not satisfy in the assumption which is a contradiction.
\end{proof}
A tree $T$ is called a $\gamma_{ri2}$-\emph{ER-critical tree} if for each edge $e$ of $T$, $\gamma_{ri2}(T-e) = \gamma_{ri2}(T) + 1$.
In what follows we give necessary and sufficient conditions for a tree
to be  $\gamma_{ri2}$-ER-critical.
Let $\mathcal F=\{S(T)\mid T\;{\rm is\;a\;non-trivial\;tree}\}$. Clearly, $F$ is a family of trees.

\begin{theorem}\label{suf}
If $T\in \mathcal F$, then $T$ is a $\gamma_{ri2}$-ER-critical tree.
\end{theorem}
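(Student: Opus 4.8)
Let $T = S(H)$ for a non-trivial tree $H$. I want to show that for every edge $e$ of $T$, $\gamma_{ri2}(T-e) = \gamma_{ri2}(T)+1$. By Proposition~\ref{edgedel}(ii), it suffices to verify that every $\gamma_{ri2}$-function $f = (V_0;V_1;V_2)$ on $T$ places exactly one endpoint of $e$ in $V_0$, and that this endpoint has exactly $2$ neighbors in $V_1 \cup V_2$. So the plan has two parts: first, pin down the structure of an arbitrary $\gamma_{ri2}$-function on $S(H)$; second, use that structure to check the local condition at every edge.

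For the structural part, I would first recall/derive that $\gamma_{ri2}(S(H)) = n(H) + |E(H)| = 2n(H)-1$ (or argue directly from Observation~\ref{obv} and the path/spider lemmas that any 2RiDF must weight this much); the natural optimal assignment colors the original vertices of $H$ alternately with $1$'s and $2$'s in $S(H)$ and gives every subdivision vertex $0$. The key claim I would prove is: \emph{in every $\gamma_{ri2}$-function $f$ on $T = S(H)$, every subdivision vertex lies in $V_0$, and every original vertex of $H$ lies in $V_1 \cup V_2$.} One direction follows because the original vertices that are leaves of $H$ become leaves of $T$, hence are in $V_1\cup V_2$ by Observation~\ref{obv}, and then a counting/exchange argument (each subdivision vertex has degree $2$, so if it received a positive label we could shift weight toward an endpoint without increasing $w(f)$, and minimality forces equality, after which an exchange gives a function of the claimed form) propagates this to all subdivision vertices and all original vertices. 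I expect this exchange argument — showing one can always reach a $\gamma_{ri2}$-function of the "canonical" type where $V_0$ is exactly the set of subdivision vertices — to be the main obstacle, since one must handle the case of a subdivision vertex both of whose $H$-endpoints carry the same label, which is where the parity/bipartite structure of $H$ enters.

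Once the structure is fixed, the edge check is short. Every edge $e$ of $T = S(H)$ joins an original vertex $u$ of $H$ (with $f(u) \in \{1,2\}$) to a subdivision vertex $w$ (with $f(w) = 0$). Thus exactly one endpoint of $e$, namely $w$, is in $V_0$. It remains to see that $w$ has exactly $2$ neighbors in $V_1 \cup V_2$: but $w$ has degree exactly $2$ in $T$ (it subdivides a single edge of $H$), and both its neighbors are original vertices of $H$, hence both lie in $V_1 \cup V_2$ by the structural claim. So $w$ has exactly $2$ neighbors in $V_1 \cup V_2$, and Proposition~\ref{edgedel}(ii) applies to give $\gamma_{ri2}(T-e) = \gamma_{ri2}(T)+1$. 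Since $e$ was arbitrary, $T$ is $\gamma_{ri2}$-ER-critical.

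A small point I would be careful about: Proposition~\ref{edgedel}(ii) quantifies over \emph{all} $\gamma_{ri2}$-functions, so I genuinely need the structural claim for every optimal $f$, not just one; this is exactly why the exchange argument above must be phrased as "every $\gamma_{ri2}$-function has subdivision vertices in $V_0$ and original vertices in $V_1\cup V_2$," rather than merely exhibiting one such function. If a direct proof of that universal statement is awkward, a fallback is to argue by contradiction: if some $\gamma_{ri2}$-function had a subdivision vertex $w$ with $f(w)\neq 0$, then (again using that $w$ has degree $2$ and that leaves of $T$ are original vertices of $H$) one could delete the weight at $w$ and re-dominate along the incident path, contradicting optimality or producing the needed canonical form — then invoke the edge check above.
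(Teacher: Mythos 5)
Your overall architecture is exactly the paper's: show that \emph{every} $\gamma_{ri2}$-function on $T=S(H)$ has $V_0$ equal to the set of subdivision vertices and $V_1\cup V_2$ equal to $V(H)$, and then read off the hypothesis of Proposition~\ref{edgedel}(ii) edge by edge. The final edge check is correct. But the structural claim — which you yourself flag as ``the main obstacle'' — is precisely the content of the theorem, and your proposal does not actually prove it. The exchange argument you sketch (``shift weight toward an endpoint without increasing $w(f)$'') can at best produce \emph{one} canonical optimal function; as you correctly observe, Proposition~\ref{edgedel}(ii) needs the claim for \emph{all} optimal functions, and a weight-preserving local exchange is powerless to rule out non-canonical optima. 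Your fallback (``delete the weight at a positively labeled subdivision vertex $w$ and re-dominate along the incident path'') is also not a proof: re-dominating the now-uncovered neighbors of $w$ may cost exactly what you saved, so no contradiction with optimality follows from a purely local move. Minor but symptomatic: you assert $\gamma_{ri2}(S(H))=n(H)+|E(H)|=2n(H)-1$, which is the \emph{order} of $S(H)$; the correct value (the paper's Proposition~\ref{dom2}) is $|V(H)|$, and indeed the canonical assignment you describe has weight $n(H)$.

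The missing idea is a \emph{global} counting argument, which is how the paper closes the gap. Writing $X$ for the subdivision vertices, $Y=V(H)$, $A=X\cap(V_1\cup V_2)$ and $B=Y\cap(V_1\cup V_2)$ for an arbitrary $\gamma_{ri2}$-function $h$: every vertex of $Y-B$ has all its neighbors in $X$ and must see both colors, hence has at least two neighbors in $A$; every vertex of $A$ has degree $2$; double-counting the edges between $A$ and $Y-B$ gives $|A|\ge|Y-B|=|Y|-|B|$, whence $\gamma_{ri2}(T)=|A|+|B|\ge|Y|\ge\gamma_{ri2}(T)$ (the last inequality from the explicit canonical 2RiDF supported on $Y$). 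Equality throughout forces $N(A)\subseteq Y-B$ with the edge count tight, and connectivity of $T$ then produces a zero-labeled subdivision vertex adjacent to a zero-labeled vertex of $Y$, which cannot be rainbow-dominated — so $A=\emptyset$ and $B=Y$. Without this (or an equivalent global argument exploiting that every neighbor of a $Y$-vertex lies in $X$ and vice versa), your proof is incomplete at its central step.
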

\begin{proof}
By definition, there exists a non-trivial tree $T'$ such that $T=S(T')$. Let $X$ be the set of subdivision vertices and $Y=V(T')$. Clearly, all leaves are in $Y$. Let $u$ be a leaf. Define a function $f : V(T ) \rightarrow \{0, 1, 2\}$ by $f(v)  = 1$ when $d(u,v)\equiv 0\pmod 4$, $f(v) = 2$ when  $d(u,v)\equiv 2\pmod 4$ and $f(v) = 0$ for $v \in X$. Since  there is a unique path between any $2$ vertices of a tree and
since the distance between any $2$ leaves is even, the function $f$ is well defined and clearly it is a 2RiDF on $T$ with $V_1^f \cup V_2^f = Y$.  	 Hence $|Y| \geq \gamma_{ri2}(T)$.

Assume that $h = (V_0; V_1;V_2)$ be an arbitrary $\gamma_{ri2}$-function  on $T$. We claim that $V_1\cup V_2=Y$. Suppose, to the contrary, that
$A=X\cap (V_1\cup V_2)$ and  $B=Y\cap (V_1\cup V_2)$.  	
Since each vertex in $A$ has degree $2$, the number of edges between $A$ and $Y-B$
is at most $2|A|$. On the other hand, since each vertex in $Y-B$ is adjacent to
at least two vertices in $A$, the number of edges between $A$ and $Y-B$
is at least $2|Y-B|$. It follows that $|A| \geq |Y|-|B|$. Thus
$$\gamma_{ri2}(T) = |V_1 \cup V_2| = |A \cup B|= |A| + |B| \geq |Y|  \geq 	\gamma_{ri2}(T).$$
Hence all inequalities occurring in above chain become equalities and so $|Y|=\gamma_{ri2}(T)$ and $|A| =|Y|-|B| = |Y-B|$.
This implies that $N(A) = Y-B$. Since $T$ is connected, there are two vertices $y \in Y-B$ and $z \in X-A$ such that $yz\in E(T)$.
But then $z$ is adjacent to exactly one vertex in $B$, and so $z$ is not dominated by $h$. Thus $V_1\cup V_2=Y$ and $V_0=X$. We conclude from  Proposition \ref{edgedel} that $T$ is a $\gamma_{ri2}$-ER-critical.
\end{proof}

The next result is an immediate consequence of the proof of Theorem \ref{suf}.
\begin{proposition}\label{dom2}
If $T$ is a non-trivial tree then $\gamma_{ri2}(S(T)) = |V(T)|$.
\end{proposition}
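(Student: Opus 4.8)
The plan is to observe that this equality is already contained in the proof of Theorem~\ref{suf} and simply needs to be isolated. Write $S(T)$ with vertex set $V(T) \cup X$, where $X$ is the set of subdivision vertices (one per edge of $T$), and put $Y = V(T)$. Exactly as in that proof, I would first get the upper bound $\gamma_{ri2}(S(T)) \le |Y|$ by exhibiting an explicit 2RiDF: fix a leaf $u$ of $S(T)$, assign $1$ to every vertex $v$ with $d(u,v) \equiv 0 \pmod 4$, assign $2$ to every vertex $v$ with $d(u,v) \equiv 2 \pmod 4$, and assign $0$ to every vertex of $X$ (each of which lies at odd distance from $u$). Because any two leaves of a tree are at even distance and $S(T)$ is a tree, this function is well defined, its support is precisely $Y$, each $V_i$ is independent, and every $0$-vertex of $X$ sees both a $1$ and a $2$; hence it is a 2RiDF of weight $|Y|$.

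For the matching lower bound, I would take an arbitrary $\gamma_{ri2}$-function $h = (V_0; V_1; V_2)$ on $S(T)$, set $A = X \cap (V_1 \cup V_2)$ and $B = Y \cap (V_1 \cup V_2)$, and repeat the double count of the edges joining $A$ to $Y - B$: each vertex of $A$ has degree $2$ in $S(T)$, so there are at most $2|A|$ such edges, while each vertex of $Y - B$ is a $0$-vertex and so has at least two neighbours in $A$, giving at least $2|Y - B|$ such edges. Hence $|A| \ge |Y| - |B|$, so $\gamma_{ri2}(S(T)) = |A \cup B| = |A| + |B| \ge |Y|$. Combining the two bounds gives $\gamma_{ri2}(S(T)) = |Y| = |V(T)|$.

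Since every step above occurs (up to renaming $T' \mapsto T$) in the proof of Theorem~\ref{suf}, there is no genuine obstacle here; the only point worth a sentence is the well-definedness of $f$, which rests on the parity of distances between leaves in a tree. Alternatively, one can just cite the displayed chain of (in)equalities in the proof of Theorem~\ref{suf}, observe that all of its terms collapse to $|Y|$, and note that $|Y| = |V(T)|$ by the construction of $S(T)$.
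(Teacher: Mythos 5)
Your proof is correct and is essentially the paper's own argument: the paper derives Proposition~\ref{dom2} as an immediate consequence of the chain of (in)equalities in the proof of Theorem~\ref{suf}, which is exactly the two-sided bound (explicit 2RiDF supported on $Y$ for the upper bound, double-counting edges between $A$ and $Y-B$ for the lower bound) that you reproduce.
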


\begin{theorem}\label{nes}
Let $T$ be a non-trivial tree. Then $T\in \mathcal F$ if and only if $T$ is a $\gamma_{ri2}$-ER-critical tree.
\end{theorem}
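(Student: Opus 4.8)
The plan is to prove the missing direction: if $T$ is a $\gamma_{ri2}$-ER-critical tree, then $T\in\mathcal F$, i.e. $T$ is the subdivision graph of some non-trivial tree. (The converse is Theorem \ref{suf}.) The strategy is to show that a $\gamma_{ri2}$-ER-critical tree is bipartite with a very rigid structure: one side $X$ of the bipartition consists entirely of degree-$2$ vertices, and $T$ arises by subdividing each edge of the tree $T'$ induced on the other side. I would first extract structural consequences of criticality from Proposition \ref{edgedel}(ii): for \emph{every} edge $e=xy$ and every $\gamma_{ri2}$-function $f=(V_0;V_1;V_2)$ on $T$, exactly one of $x,y$ lies in $V_0$ and that endpoint has exactly two neighbours in $V_1\cup V_2$. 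Applying this to an edge both of whose endpoints happen to be leaves would be impossible, so $T\neq K_2$; more importantly, taking $e$ incident to a leaf $u$ (so $f(u)\neq 0$ by Observation \ref{obv}) forces the support vertex of $u$ into $V_0$ with exactly two positive neighbours, hence to have degree exactly $2$.

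The core of the argument is to propagate this degree-$2$ condition. First I would argue that $V_0$ is independent and $V_1\cup V_2$ is independent in $T$ — that is, $(V_0,V_1\cup V_2)$ is a bipartition — for every $\gamma_{ri2}$-function $f$: indeed if $f$ assigns $0$ to two adjacent vertices $x,y$, then criticality on $e=xy$ is violated, and if $f$ assigns positive values to adjacent $x,y$ then again exactly one of them must be in $V_0$, contradiction. So put $X=V_0$ and $Y=V_1\cup V_2$ for a fixed $\gamma_{ri2}$-function; $X$ and $Y$ are independent, every leaf is in $Y$, and by the previous paragraph every vertex of $X$ adjacent to a support leaf has degree $2$. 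Next I would show \emph{every} vertex of $X$ has degree exactly $2$: a vertex $x\in X$ has degree $\geq 2$ (it must be dominated by both colours), and if $\deg(x)\geq 3$ one finds an edge $e=xy$ with $y\in Y$, $y\notin V_0$; since exactly one endpoint of $e$ is in $V_0$ it is $x$, which must then have exactly two positive neighbours — fine — but I also need $y$ to have a neighbour other than $x$ carrying the colour $f(y)$, or else moving to $T-e$ would cost weight, which is exactly what criticality guarantees. The delicate bookkeeping is to combine the "exactly two positive neighbours" condition with an Observation \ref{obv}/Corollary \ref{leaf2}-style argument to force $\deg(x)=2$ while also forcing each $y\in Y$ to have all its neighbours in $X$ and to be non-isolated after contracting the subdivisions — i.e. to show $Y$ induces (via the length-two paths through $X$) a connected subgraph, which will be the tree $T'$.

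Once I have that $X$ is an independent set of degree-$2$ vertices and $Y$ is an independent set containing all leaves, with $V(T)=X\cup Y$, the recovery of $T'$ is formal: contract each $x\in X$ (equivalently, replace the path $y\,x\,y'$ by the edge $yy'$) to obtain a graph $T'$ on vertex set $Y$. Since $T$ is a tree, each $x\in X$ has exactly two neighbours in $Y$ and these are distinct, so $T'$ has no loops; counting edges, $|E(T')|=|X|=|E(T)|-|X|=|Y|-1$ (using $|X|+|Y|=n$ and $|E(T)|=n-1$), and $T'$ is connected because $T$ is, so $T'$ is a non-trivial tree with $S(T')=T$, giving $T\in\mathcal F$. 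The main obstacle I anticipate is the middle step: ruling out $\deg(x)\geq 3$ for $x\in X$ and, dually, ruling out an edge inside $Y$ is handled by independence, but ruling out an $X$-vertex of degree $\geq 3$ requires carefully constructing, from a $\gamma_{ri2}$-function on $T$, a $\gamma_{ri2}$-function on $T-e$ of the \emph{same} weight using the surplus positive neighbours of the high-degree vertex — essentially re-running the colour-swap argument from the proof of Proposition \ref{edgedel}(ii) with the role of $z_1,z_2$ played by the extra branches at $x$ — and this is where one must be cautious that the swap stays independent and still rainbow-dominates. I would also need to check that the structural conclusions, derived for \emph{one} $\gamma_{ri2}$-function, actually pin down $T$ itself (as in the proof of Theorem \ref{suf}, where $V_1\cup V_2=Y$ for \emph{every} $\gamma_{ri2}$-function), and invoke Proposition \ref{edgedel}(ii) in the forward direction to close the loop.
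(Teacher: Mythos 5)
Your proof is correct, and it takes a genuinely more global route than the paper's. Both arguments hinge on Proposition \ref{edgedel}(ii), but the paper proceeds by a rooted propagation from a single leaf: it shows the support vertex of a leaf has degree $2$ and value $0$, that the next layer must receive the value $2$, and so on, labelling every vertex by its distance from the chosen leaf modulo $4$ and reading off the subdivision structure at the end. You instead observe that criticality of \emph{every} edge forces, for a fixed $\gamma_{ri2}$-function $f=(V_0;V_1;V_2)$, exactly one endpoint of each edge to lie in $V_0$; hence $V_0$ and $V_1\cup V_2$ are both independent and form a bipartition, every neighbour of a $V_0$-vertex lies in $V_1\cup V_2$, and the ``exactly two neighbours in $V_1\cup V_2$'' clause of Proposition \ref{edgedel}(ii) then gives $\deg(x)=2$ for all $x\in V_0$ at once. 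This makes the step you flag as the ``main obstacle'' (ruling out $\deg(x)\ge 3$) a one-line consequence: no re-run of the colour-swap argument is needed, and one $\gamma_{ri2}$-function suffices, since the resulting degree and bipartition facts are properties of $T$ alone. Your closing contraction and edge count ($|E(T)|=2|V_0|$, hence $|V_0|=|V_1\cup V_2|-1$, so the contracted graph is a non-trivial tree $T'$ with $S(T')=T$) is a clean finish that the paper handles more informally. The trade-off is that the paper's propagation also exhibits the explicit optimal function (values $1$ and $2$ alternating with distance mod $4$), whereas your argument is shorter and avoids the bookkeeping of the labels $x_j^i,y_j^i$.
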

\begin{proof}
According to Theorem \ref{suf}, we need to prove necessity. Let $T$ be a $\gamma_{ri2}$-ER-critical tree and $f$ a $\gamma_{ri2}$-function. Suppose $x^1_1$ is a leaf of $T$, $y^1_1$ is its neighbor and root $T$ at $x^1_1$. We may assume without loss of generality that $f(x^1_1)=1$. Since $T$ is $\gamma_{ri2}$-ER-critical, we deduce from Proposition \ref{edgedel} that $\deg(y^1_1)=2$ and $f(y^1_1)=0$. Assume that $N(y^1_1)=\{x^1_1,x^2_1\}$. To dominate $y^1_1$, we must have $f(x^2_1)=2$. Let $N(x^2_1)-\{y^1_1\}=\{y^2_1,\ldots,y^2_k\}$ if $N(x^2_1)-\{y^1_1\}\neq\emptyset$. Since $T$ is $\gamma_{ri2}$-ER-critical, we conclude from Proposition \ref{edgedel} that $\deg(y^2_1)=\cdots=\deg(y^2_k)=2$ and $f(y^2_1)=\cdots=f(y^2_k)=0$. Assume 
$N(y^2_i)=\{x_1^2,x^3_i\}$
for each $i$. Then we must have $f(x^3_1)=\cdots=f(x^3_k)=1$. By continuing this process, we have (i) each vertex with odd distance from $x_1^1$ is of degree 2, labeled with $y^i_j$ for some $i,j$, and is assigned 0 under $f$, (ii) each vertex with distance $4s+2\;(s\ge 0)$ from $x_1^1$ is labeled with $x^i_j$ for some $i,j$, and is assigned 2 under $f$, and (ii) each vertex with distance $4s\;(s\ge 1)$ from $x_1^1$ is labeled with $x^i_j$ for some $i,j$, and is assigned 1 under $f$. Let $T'$ be a tree obtained from $T$ by removing any vertex with label $y^i_j$ and joining the vertices adjacent to $y^i_j$. Then $T=S(T')$ and so $T\in \mathcal F$. This completes the proof.
\end{proof}

\begin{corollary}\label{ccc}
There exists  an $n$-order $\gamma_{ri2}$-ER-critical tree $T$ if and only if $n \geq 3$ and $n$  is odd.
Moreover, $\gamma_{ri2}(T) = \left\lceil \frac{n+1}{2} \right\rceil$.
\end{corollary}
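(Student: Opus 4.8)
The plan is to combine Theorem~\ref{nes} with Proposition~\ref{dom2} to pin down both the set of attainable orders and the value of the parameter. First I would argue the ``only if'' direction: if $T$ is an $n$-order $\gamma_{ri2}$-ER-critical tree, then by Theorem~\ref{nes} we have $T = S(T')$ for some non-trivial tree $T'$ on, say, $m$ vertices; since $T'$ has $m-1$ edges and subdividing each edge adds one vertex, $n = |V(S(T'))| = m + (m-1) = 2m-1$, which is odd, and $n \ge 3$ because $m \ge 2$. Conversely, for any odd $n \ge 3$ write $n = 2m-1$ with $m \ge 2$ an integer; take any non-trivial tree $T'$ on $m$ vertices (for instance the path $P_m$), and then $S(T') \in \mathcal{F}$ has order $2m-1 = n$ and is $\gamma_{ri2}$-ER-critical by Theorem~\ref{suf}. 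This establishes the characterization of attainable orders.

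For the ``Moreover'' clause, I would invoke Proposition~\ref{dom2}: for $T = S(T')$ with $|V(T')| = m$ we get $\gamma_{ri2}(T) = |V(T')| = m$. Since $n = 2m-1$, we have $m = (n+1)/2$, and because $n$ is odd this equals $\lceil (n+1)/2 \rceil = (n+1)/2$. Hence $\gamma_{ri2}(T) = \lceil \frac{n+1}{2} \rceil$ for every $\gamma_{ri2}$-ER-critical tree $T$ of order $n$, as claimed.

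I do not anticipate a genuine obstacle here, since all the substantive work has already been done in Theorems~\ref{suf} and~\ref{nes} and Proposition~\ref{dom2}; the only thing to be careful about is the bookkeeping relating $n$, $m$, and the edge count of $T'$ (making sure ``non-trivial'' gives $m \ge 2$ hence $n \ge 3$, and that every odd $n \ge 3$ is realized, which is why a path $P_m$ suffices as a witness). So the corollary is essentially an arithmetic corollary of the structural theorem, and the proof is short.

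\begin{proof}
By Theorem~\ref{nes}, a non-trivial tree $T$ is $\gamma_{ri2}$-ER-critical if and only if $T = S(T')$ for some non-trivial tree $T'$. Let $m = |V(T')| \ge 2$; since $T'$ is a tree it has $m-1$ edges, and subdividing each edge introduces exactly one new vertex, so $n := |V(T)| = m + (m-1) = 2m-1$. In particular $n$ is odd and $n \ge 3$. Conversely, given an odd integer $n \ge 3$, put $m = (n+1)/2 \ge 2$ and let $T' = P_m$; then $S(P_m)$ is a $\gamma_{ri2}$-ER-critical tree of order $2m-1 = n$ by Theorem~\ref{suf}. This proves the first statement. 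Finally, by Proposition~\ref{dom2}, $\gamma_{ri2}(T) = \gamma_{ri2}(S(T')) = |V(T')| = m = (n+1)/2 = \left\lceil \frac{n+1}{2} \right\rceil$, where the last equality holds because $n$ is odd.
\end{proof}
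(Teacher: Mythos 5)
Your proof is correct and takes exactly the route the paper intends: the paper's own proof is simply ``Immediately by Theorem~\ref{nes} and Proposition~\ref{dom2},'' and you have filled in the same bookkeeping (order $2m-1$ of a subdivided tree on $m\ge 2$ vertices, realizability via $S(P_m)$, and $\gamma_{ri2}=m=(n+1)/2$). No issues.
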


\begin{proof}
Immediately by Theorem \ref{nes}  and Proposition \ref{dom2}.
\end{proof}
S. Brezovnik and  T.K. \v{S}umenjak \cite{BS} presented the following bounds on 2-rainbow independent domination numbers of trees.
\begin{theorem}\label{BS}
{\em For any tree $T$ of order $n$ with $\ell$ leaves, $$\frac{n+1}{2}\le \gamma_{rik}(T)\le \frac{n+\ell}{2}.$$}
\end{theorem}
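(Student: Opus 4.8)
The two inequalities are independent, so the plan is to treat them separately; the left one is short and the right one is the real content.

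For the lower bound $\gamma_{ri2}(T)\ge (n+1)/2$, I would fix any $\gamma_{ri2}$-function $f=(V_0;V_1;V_2)$ and count the edges of $T$ joining $V_0$ to $W:=V_1\cup V_2$. By definition every $v\in V_0$ has a neighbour in $V_1$ and a different neighbour in $V_2$, so each vertex of $V_0$ contributes at least two such edges; summing, the number of $V_0$--$W$ edges is at least $2|V_0|$. Since $T$ is a tree this number is at most $|E(T)|=n-1$, whence $2|V_0|\le n-1$ and $\gamma_{ri2}(T)=|W|=n-|V_0|\ge (n+1)/2$.

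For the upper bound $\gamma_{ri2}(T)\le (n+\ell)/2$ I would exhibit a single 2RiDF of that weight (we may assume $n\ge 2$, the case $T\in\{K_1,K_2\}$ being immediate). By Observation \ref{obv} every leaf must lie in $V_1\cup V_2$, so the point is to colour all $\ell$ leaves together with at most half of the internal vertices. Let $T_I$ be the subtree induced by the non-leaves of $T$ (connected, since the path in $T$ between two non-leaves uses only non-leaves), and let $(A,B)$ be the bipartition of $T_I$ with $|A|\le|B|$, so $|A|\le |V(T_I)|/2=(n-\ell)/2$. I would set $f\equiv 0$ on $B$, colour $A$ with values in $\{1,2\}$ (how, in a moment), and then colour the leaves: a leaf whose support vertex lies in $A$ receives the value opposite to that support vertex; a leaf whose support vertex $b$ lies in $B$ receives a value that makes $b$ see both colours (if $b$ has an internal neighbour coloured $c$, colour all leaves at $b$ with $\bar c$; if $b$ has no internal neighbour it has at least two leaves, coloured $1$ and $2$). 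Independence of $V_1$ and $V_2$ is then automatic, since $A$ is independent in $T_I$ and each leaf avoids its support vertex's colour, and the only $0$-vertices needing attention are those $b\in B$ with no leaf-neighbour at all; such $b$ have all their (at least two) neighbours in $A$.

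The crux is therefore the colouring of $A$: it must be chosen so that every $b\in B$ without a leaf-neighbour sees both $1$ and $2$ among its $A$-neighbours. A priori these ``not monochromatic'' requirements form a hypergraph $2$-colouring problem, which need not be solvable in general; the key point is that the neighbourhoods in question come from the \emph{tree} $T_I$, so the obvious greedy scheme works. I would root $T_I$ at a vertex of $A$ (possible once $|V(T_I)|\ge 2$; if $T_I$ is a single vertex then $T$ is a star and the bound is checked directly) and colour $A$ in order of non-decreasing depth: the root gets $1$, and for any other $a\in A$, with parent $b\in B$ and grandparent $a'\in A$, let $a$ get $\overline{c(a')}$ if $a$ is the first child of $b$ to be coloured and an arbitrary value otherwise. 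Every $b\in B$ with a child in $A$ then sees $c(a')$ (from its parent $a'$) and $\overline{c(a')}$ (from its first child); and every $b\in B$ without a child in $A$ is a leaf of $T_I$, hence has a leaf-neighbour in $T$ and imposes no constraint. A direct check now shows $f$ is a 2RiDF with $w(f)=|A|+\ell\le (n-\ell)/2+\ell=(n+\ell)/2$. The main obstacle is exactly this colouring lemma (and then the somewhat tedious but routine verification that $f$ dominates correctly and that $V_1,V_2$ stay independent once the leaf values are filled in).
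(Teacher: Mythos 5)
This theorem is not proved in the paper at all: it is quoted from the (submitted) work of Brezovnik and \v{S}umenjak \cite{BS}, so there is no in-paper argument to compare yours against. On its own merits your proof is essentially correct and self-contained for the case $k=2$, which is the only case the paper actually uses (note, though, that the statement is phrased for $\gamma_{rik}$, so a complete proof of the quoted theorem would need to address general $k$; your lower-bound count generalizes immediately, since each vertex of $V_0$ sends at least $k\ge 2$ edges into $V_1\cup\cdots\cup V_k$, but the upper-bound construction is specific to two colours). The lower bound via counting $V_0$--$(V_1\cup V_2)$ edges against $|E(T)|=n-1$ is clean and correct. For the upper bound, the key observations all hold: the internal vertices induce a connected subtree $T_I$, its smaller bipartition class $A$ has size at most $(n-\ell)/2$, and the greedy top-down colouring of $A$ (root in $A$, first-coloured child of each $b\in B$ takes the colour opposite to $b$'s grandparent) guarantees that every internal vertex $b\in B$ with no leaf-neighbour sees both colours, while those with a leaf-neighbour are fixed by the leaf assignment; independence of $V_1$ and $V_2$ follows since $A$ is independent and each leaf avoids its support's colour. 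Two small points worth tightening: the degenerate case should be $n\ge 3$ rather than $n\ge 2$ (for $K_1$ the upper bound $(n+\ell)/2=1/2$ actually fails, since $\gamma_{ri2}(K_1)=1$, so the trivial tree must be excluded from the statement rather than called ``immediate''), and when $T_I$ is a single vertex your convention $|A|\le|B|$ forces $A=\emptyset$, so the star case genuinely must be split off as you do. With those caveats the argument stands, and it is a useful addition given that the source of the theorem is unpublished.
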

By Corollary \ref{ccc} and Theorem \ref{BS}, we see that $\gamma_{ri2}$-ER-critical
trees attains minimum value of 2-rainbow independent domination number among all trees.


\section{Open problems and questions}
We conclude the paper by some  problems and directions for further research.
\begin{enumerate}
\item Find sharp lower and upper bounds for 2-rainbow independent domination number of a connected
                                $\gamma_{ri2}$-stable ($\gamma_{ri2}$-ER-critical) graph in terms of its order.
  \item What is the minimum/maximum number of edges of a  connected
                                $\gamma_{ri2}$-stable ($\gamma_{ri2}$-ER-critical) graph
                                with  a given order and  a 2-rainbow independent domination number?

  \item Characterize  all unicyclic  $\gamma_{ri2}$-stable ($\gamma_{ri2}$-ER-critical) graphs.
\end{enumerate}

\end{document}